\pdfoutput=1
\documentclass[11pt,a4paper]{article}

\usepackage[margin=1in]{geometry}
\usepackage{amsmath,amssymb,amsfonts,mathtools,amsthm,mathrsfs,bbm,booktabs,enumitem,tikz,tkz-euclide,graphicx,xcolor,ifpdf,nicefrac}
\usetikzlibrary{calc}
\usepackage[numbers,sort&compress]{natbib}
\usepackage[hidelinks]{hyperref}

\ifpdf
  \DeclareGraphicsExtensions{.pdf,.png,.jpg}
\else
  \DeclareGraphicsExtensions{.eps}
\fi
\graphicspath{{./}}

\definecolor{mygray}{gray}{0.90}
\definecolor{mygreen}{rgb}{0,0.5,0}

\hypersetup{
  pdftitle={Finite element and box-method discretizations for fractional elliptic problems with quadrature and mass lumping},
  pdfauthor={Kelvin J. R. Almeida-Sousa, David Bolin, and Alexandre B. Simas}
}

\providecommand{\email}[1]{\href{mailto:#1}{\texttt{#1}}}
\providecommand{\funding}[1]{#1}
\title{Finite element and box-method discretizations for fractional elliptic problems with quadrature and mass lumping\thanks{\funding{This work was supported by King Abdullah University of Science and Technology (KAUST) under Award No.~ORFS-CRG11-2022-5015.}}}
\author{Kelvin J. R. Almeida-Sousa\thanks{Statistics Program, King Abdullah University of Science and Technology, Saudi Arabia (\email{kelvinjhonson.silva@kaust.edu.sa}, \email{david.bolin@kaust.edu.sa}, \email{alexandre.simas@kaust.edu.sa}).}
\and David Bolin\footnotemark[2]
\and Alexandre B. Simas\footnotemark[2]}
\date{}
\newenvironment{keywords}{\par\smallskip\noindent\textbf{Keywords: }}{\par}
\newenvironment{MSCcodes}{\par\smallskip\noindent\textbf{MSC (2020): }}{\par\medskip}

\theoremstyle{plain}
\newtheorem{theorem}{Theorem}[section]
\newtheorem{lemma}[theorem]{Lemma}
\newtheorem{proposition}[theorem]{Proposition}
\newtheorem{corollary}[theorem]{Corollary}
\theoremstyle{definition}
\newtheorem{definition}[theorem]{Definition}
\newtheorem{assumption}[theorem]{Assumption}
\newtheorem{example}[theorem]{Example}
\theoremstyle{remark}
\newtheorem{remark}[theorem]{Remark}

\numberwithin{equation}{section}

\DeclareMathOperator{\divop}{div}
\DeclareMathOperator{\sign}{sign}

\newcommand{\cA}{\mathcal A}
\newcommand{\cB}{\mathcal B}

\newcommand{\cH}{\mathcal H}

\newcommand{\cS}{\mathcal S}
\newcommand{\cT}{\mathcal T}

\newcommand{\cV}{\mathcal V}

\newcommand{\cZ}{\mathcal Z}
\newcommand{\R}{\mathbb R}
\newcommand{\norm}[1]{\left\lVert #1 \right\rVert}
\newcommand{\seminorm}[1]{\left\lvert #1 \right\rvert}
\newcommand{\ip}[2]{\left(#1,#2\right)}
\newcommand{\innerh}[2]{\left\langle #1,#2\right\rangle_{h}}

\allowdisplaybreaks

\begin{document}

\maketitle

\begin{abstract}
We analyze numerical approximation of the fractional elliptic problem $L^{\beta}u=f$, ${\beta>0}$, where $L$ is a second-order selfadjoint elliptic operator with homogeneous Dirichlet or Neumann boundary conditions. The paper develops a unified conforming piecewise linear framework that covers both the standard finite element discretization and the box-method discretization of fractional powers. The key point is that the discrete fractional operator is defined with respect to an admissible inner product on the trial space. This includes, in particular, the standard $L^{2}$ inner product and the quadrature-based mass-lumped inner product, and we also identify a broader family of admissible inner products interpolating between these two realizations. Within this framework, we show that the mass-lumped choice yields the intrinsic fractional box discretization, namely the one obtained by taking fractional powers of the nonfractional box solution operator. For both the finite element and box-method realizations, we establish error estimates under natural consistency assumptions, making explicit the effect of load quadrature in the box case. The analysis applies directly to practical schemes and is supported by numerical experiments in one and two space dimensions.
\end{abstract}

\begin{keywords}
fractional elliptic operators, box method, finite volume method, mass lumping, quadrature error, finite element method
\end{keywords}

\begin{MSCcodes}
35R11, 65N08, 65N15, 65N30, 47A60
\end{MSCcodes}

\section{Introduction}

We study numerical approximation of the fractional elliptic problem
\begin{equation}\label{eq:model}
\begin{cases}
L^{\beta}u=f & \text{in } \cS,\\
\cB u =0 & \text{on } \partial\cS,
\end{cases}
\end{equation}
where $\cS\subset\R^{d}$ is a bounded convex interval, polygon, or polyhedron, $d\in\{1,2,3\}$, $\beta>0$, $\cB$ denotes either homogeneous Dirichlet or homogeneous Neumann boundary conditions, and the operator is  
${L=-\divop(A\nabla)+\kappa^{2}}$.
Here $A\in W^{1,\infty}(\cS;\R^{d\times d})$ is symmetric and uniformly elliptic, and $\kappa\in W^{1,\infty}(\cS)$ satisfies $\kappa\ge \kappa_{0}>0$. The fractional powers of $L$ are defined by spectral functional calculus, and under these assumptions \eqref{eq:model} has a unique solution $u_{\beta}=L^{-\beta}f$.

Problems of this form are now standard in numerical analysis, stochastic modeling, and scientific computing. Fractional powers of elliptic operators appear in nonlocal diffusion \cite{du2020numerical,bonito2015numerical}, in regularity theory for parabolic equations \cite{stinga2017regularity}, in rational and sinc approximations of semigroups and resolvents \cite{bonito2015numerical,harizanov2020numerical}, and in Gaussian field models such as the Whittle--Mat\'ern class and its generalizations; see, for example, \cite{lindgren2011explicit,bolin2020rational,bolin2024covariance}. In all of these settings, the central numerical question is how to approximate $L^{-\beta}$ accurately and computationally efficiently.

At the continuous level, the problem is based on the coercive bilinear form
\begin{equation}\label{eq:continuous-bilinear}
a(u,v)=\int_{\cS} A\nabla u\cdot\nabla v\,dx+\int_{\cS}\kappa^{2}uv\,dx,
\qquad u,v\in\cV,
\end{equation}
where $\cV=H_{0}^{1}(\cS)$ in the Dirichlet case and $\cV=H^{1}(\cS)$ in the Neumann case. On a conforming simplicial mesh $\cT_h$, letting $V_h\subset\cV$ denote the standard continuous piecewise affine space, the classical $P_1$ finite element method (FEM) for the nonfractional problem seeks $u_h^{\mathrm{fe}}\in V_h$ such that
\begin{equation}\label{eq:intro-fem}
a(u_h^{\mathrm{fe}},\chi)=(f,\chi), \qquad \chi\in V_h.
\end{equation}
This is also the natural starting point for fractional discretization: once a discrete elliptic operator has been defined on $V_h$, its fractional powers may be defined by finite-dimensional spectral calculus.

Indeed, several numerical approaches have been developed for fractional elliptic problems, which typically combine FEM approximation with an approximation of the fractional power, including quadrature-based methods \cite{bonito2015numerical}, rational approximation methods \cite{bolin2020rational,bolin2024covariance}, and higher-dimensional extension techniques \cite{glusa2021error} based on the framework introduced by \cite{caffarelli2007extension}.

The issue we are investigating in this work
 is that the idealized formulation \eqref{eq:intro-fem} is usually not the one that is actually assembled. In computation, the stiffness matrix, the mass matrix, and the load vector are typically obtained by numerical quadrature. Thus the implemented scheme often replaces the exact bilinear form by a quadrature-based bilinear form and replaces the exact $L^{2}$ pairing by a quadrature-based inner product; in particular, the latter includes the classical mass-lumping construction, in which the mass matrix is replaced by a diagonal quadrature-based approximation, see e.g., \cite{brenner, ciarlet2002finite, Thomée2006}. For the nonfractional problem, such modifications are classical variational crimes in the sense of \cite[Chapter~4]{strangfix2008}.
For fractional powers, however, they play a more structural role. Once the discrete approximation is defined by taking a fractional power of a finite-dimensional operator, the bilinear form used to define that operator and the inner product used to realize it on $V_h$ both affect the operator whose fractional power is being computed. It is therefore unclear how the theory developed in the nonfractional case transfers to the fractional problem.

This motivates the present work, where we analyze the approximation that is actually computed, rather than the ideal exact-integration Galerkin discretization. We study how the fractional discrete approximation depends on three ingredients:
\begin{enumerate}[label=(\alph*)]
\item\label{itema} the discrete bilinear form used to approximate the elliptic operator,
\item\label{itemb} the discretization of the load term, and
\item\label{itemc} the inner product used to realize the discrete operator on the trial space.
\end{enumerate}

This issue is especially relevant in statistics. In the SPDE approach \cite{lindgren2011explicit}, Gaussian random fields are represented as solutions to elliptic or fractional elliptic equations driven by Gaussian white noise. In that setting, mass lumping and related approximations of the $L^2$ inner product are important as they replace dense mass matrices by diagonal surrogates, thereby inducing sparse precision operators and enabling scalable inference. The same principle also underlies more recent rational and covariance-based approximations of fractional SPDE models; see \cite{bolin2020rational,bolin2024covariance}.
Thus, the present paper is motivated both by how FEM approximations are assembled in practice and by the need for sparse, implementable fractional discretizations in stochastic modeling.

To treat these issues, it is useful to compare FEM with the finite volume method (FVM) and with the box method. A finite volume method starts from local conservation: one introduces control volumes $C$ and imposes balance laws of the form
\begin{equation}\label{eq:intro-fvm}
-\int_{\partial C}A\nabla u_h\cdot n_C\,dS+\int_C \kappa^2u_h\,dx=\int_C f\,dx
\end{equation}
for every control volume $C$, where $n_C$ is the outward unit normal on $\partial C$; see, e.g., \cite{chatzipantelidis2002,jianguo1998finite}.
This viewpoint naturally incorporates cellwise averaging of the load and gives a direct local interpretation for the nonfractional problem. For fractional powers, however, the situation is more delicate.
One can certainly take the matrix produced by an FVM and apply a fractional power to it.
But this operation is no longer tied directly to the local balance law \eqref{eq:intro-fvm}.
More importantly, the resulting matrix function need not be a consistent discretization of the fractional power of the continuous operator.
Indeed, $L^{-\beta}$ is itself a global spectral object, and its discretization depends not only on the entries of a stiffness matrix, but also on the Hilbert-space realization with respect to which the discrete operator is selfadjoint.
Thus, to obtain a meaningful fractional discretization, one needs more than local conservation: one needs a discrete operator that approximates the continuous selfadjoint realization of $L$ in a compatible inner product, so that the spectral functional calculus is stable under discretization. This motivates the use of conservative discretizations that also admit a global symmetric representation on a conforming space. That keeps the FVM load averaging and local conservation structure of the nonfractional problem, while providing the Hilbert-space and spectral structure needed to define and analyze fractional powers.
This perspective leads to the box method.

The box method is the vertex-centered control-volume discretization with conforming piecewise linear trial functions on the primal mesh and piecewise constant test functions on the associated barycentric dual mesh; see, for example, \cite{bank1987some,hackbusch1989first,hollbacher2021sharp}. Hence, it preserves the conservative structure of a finite volume scheme while still working on the conforming finite element space $V_h$. This makes it especially suitable for the present problem: it allows one to define fractional powers on a conforming discrete space, to incorporate quadrature effects in the elliptic operator, and at the same time to retain control-volume averaging of the load.

More precisely, if $\cZ_h^{\mathrm{act}}$ denotes the active vertex set, $\{\phi_z\}_{z\in\cZ_h^{\mathrm{act}}}$ is the nodal basis of $V_h$, and if $b_z$ is the barycentric dual cell associated with the vertex $z$,  we define
\begin{equation}\label{eq:intro-V0h}
V_{0,h}=\operatorname{span}\{\mathbbm{1}_{b_z}:z\in\cZ_h^{\mathrm{act}}\},
\end{equation}
and the transfer operator
\begin{equation}\label{eq:intro-Q}
Q:V_h\to V_{0,h},
\qquad
Q\chi=\sum_{z\in\cZ_h^{\mathrm{act}}}\chi(z)\mathbbm{1}_{b_z},
\qquad
\chi=\sum_{z\in\cZ_h^{\mathrm{act}}}\chi(z)\phi_z\in V_h.
\end{equation}
Thus $Q\chi$ is the piecewise constant function on the dual mesh that takes the nodal value $\chi(z)$ on $b_z$. Since $\{\phi_z\}_{z\in\cZ_h^{\mathrm{act}}}$ and $\{\mathbbm 1_{b_z}\}_{z\in\cZ_h^{\mathrm{act}}}$ are bases of spaces with the same dimension, $Q$ is a linear isomorphism from $V_h$ onto $V_{0,h}$. Moreover, although the diffusion coefficient $A$ is assumed to belong to $W^{1,\infty}(\cS;\R^{d\times d})$ and is therefore continuous, it is convenient to introduce the box bilinear form for a general bounded matrix field $B\in L^\infty(\cS;\R^{d\times d})$. This allows us to cover, in particular, discontinuous coefficient fields arising from elementwise averaging or quadrature-based approximations. Hence, for a bounded matrix field $B\in L^\infty(\cS;\R^{d\times d})$, we define the box bilinear form
\begin{equation}\label{eq:intro-box-bilinear}
a_h^{B}(\chi,\psi) = -\sum_{z\in\cZ_h^{\mathrm{act}}}\psi(z)\int_{\partial b_z}B\nabla\chi\cdot\eta\,dS
+\int_{\cS}\kappa^{2}Q\chi\,Q\psi\,dx, \qquad \chi,\psi\in V_h,
\end{equation}
where $\eta$ denotes the outward unit normal on $\partial b_z$.
Intuitively, $a_h^{B}$ is obtained by writing the flux-balance equation on each dual control volume, multiplying the balance on $b_z$ by the nodal weight $\psi(z)$, and summing over all active vertices. In this way, the box method is a conservative finite-volume discretization written on the conforming finite element space $V_h$. Its nonfractional version reads: find $u_h^{\mathrm{box}}\in V_h$ such that
\begin{equation}\label{eq:intro-box}
a_h^{A}(u_h^{\mathrm{box}},\chi)=(f,Q\chi), \qquad \chi\in V_h.
\end{equation}

The point of the present paper is that, once fractional powers are introduced, one must keep separate the discrete bilinear form, the load discretization, and the inner product on $V_h$. Our goal is to analyze precisely this dependence.
To do so, we introduce a class of \emph{admissible} inner products on $V_h$, broad enough to include the exact $L^2$ product, the quadrature-based mass-lumped product, and explicit banded families of inner products. We then study fractional finite-element and box-method discretizations in a common operator framework.
Our main contributions are the following.
\begin{enumerate}[label=(\roman*)]
\item We formulate a unified operator framework for FEM and box-method discretizations of $L^{-\beta}$ in which the discrete bilinear form, the load discretization, and the inner product on $V_h$ are kept separate.

\item We introduce a broad class of admissible inner products that includes the exact $L^2$ product, the lumped-mass product, and explicit banded families of inner products, including the interpolation family of \cite{voet2023mathematical}. Thus the theory applies to practical sparse implementations, not only to ideal exact-integration schemes, and in particular to constructions that produce banded mass matrices relevant for efficient computation in SPDE-based statistical models.
\item We show that the lumped-mass inner product is distinguished among all admissible inner products: For this choice, the fractional box discretization can be reconstructed directly from the nonfractional box solution operator.
\item We prove error estimates for both FEM and box-method fractional discretizations under natural second-order consistency assumptions on the bilinear form and the inner product used on $V_h$. The finite-element scheme retains the classical order of convergence for fractional FEM, while the box-method scheme exhibits the expected loss caused by replacing the exact load functional by control-volume averaging. As a consequence, the convergence order is not affected by the efficiency enhancements that may arise from \ref{itema}, \ref{itemb} and \ref{itemc}.
\item We show that the proposed method can be combined with any exponentially convergent approximation method for the discrete operator, such as rational approximation or sinc quadrature, to obtain a fully implementable method with theoretical guarantees.

\end{enumerate}

The analysis combines three ingredients. First, the barycentric box construction yields an exact local flux identity for cellwise constant coefficients, which leads to a coercivity theorem in $d=1,2,3$. Second, the dependence on the auxiliary inner product is handled at the operator level rather than being hidden in matrices. Third, the error analysis separates the perturbation of the bilinear form from that of the load term, making it possible to identify which part of the convergence rate comes from approximating the elliptic operator and which part comes from the box discretization of the right-hand side.

The remainder of the paper is organized as follows. Section~\ref{sec:setting} introduces the mesh notation, the primal and dual meshes, the conforming $P_1$ space, and the nonfractional box method. Section~\ref{sec:discretizations} defines the FEM and box-method fractional discretizations, introduces admissible inner products, and proves the lumped-mass characterization. Section~\ref{sec:error} states the convergence estimates for both discretizations under rough and smooth data and extends them to fully implementable rational approximations. Section~\ref{sec:proofs} collects the more technical proofs and Section~\ref{sec:numerics} illustrates the theory through numerical experiments. 

\section{Finite elements, finite volumes, and the box method}\label{sec:setting}
We begin by introducing the concepts required for the box method in the non-fractional case $\beta = 1$, which is the foundation for the general fractional problem discussed in Section~\ref{sec:discretizations}.

Recall that
$\cV= H^{1}_{0}(\cS)$ for homogeneous Dirichlet conditions and
$\cV= H^{1}(\cS)$ for homogeneous Neumann conditions, and let $\cV'$ denote the topological dual of $\cV$. We write $\cH=L^{2}(\cS)$, $\norm{v}=\norm{v}_{L^{2}(\cS)}$, $\seminorm{v}_{1}=\norm{\nabla v}_{L^{2}(\cS)}$, and $\norm{v}_{H^{1}(\cS)}^{2}=\norm{v}^{2}+\seminorm{v}_{1}^{2}$. Further, we use the notation $\lesssim$ to denote inequalities that hold up to a constant independent of $h$, and $\simeq$ to denote two-sided inequalities.

\subsection{Primal and dual meshes}
Let $\{\cT_{h}\}_{h\downarrow 0}$ be a conforming family of simplicial triangulations of $\overline{\cS}$. For each element $K\in\cT_{h}$, write $h_{K}=\operatorname{diam}K$ and let $\rho_{K}$ be the diameter of the largest ball contained in $K$. The family is \emph{shape regular} if there exists $\sigma_{\mathrm{sr}}>0$ such that
\begin{equation}\label{eq:shape-regular}
\rho_{K}\ge \sigma_{\mathrm{sr}} h_{K}, \qquad K\in\cT_{h},\ h>0.
\end{equation}
We also write $h=\max_{K\in\cT_{h}} h_{K}$. In Section~\ref{sec:error} we will additionally assume quasi-uniformity, namely that there exists $\sigma_{\mathrm{qu}}>0$ such that $\sigma_{\mathrm{qu}} h\le h_{K}\le h$ for all $K\in\cT_{h}$.

Let $\cZ_{h}$ be the set of mesh vertices, let $\cZ_{h}^{\mathrm{bd}}=\cZ_{h}\cap \partial\cS$, and let $\cZ_{h}^{\mathrm{in}}=\cZ_{h}\setminus \cZ_{h}^{\mathrm{bd}}$. The set of active vertices is
\begin{equation*}
\cZ_{h}^{\mathrm{act}}=
\begin{cases}
\cZ_{h}^{\mathrm{in}} & \text{in the Dirichlet case},\\
\cZ_{h} & \text{in the Neumann case}.
\end{cases}
\end{equation*}
The trial space is the conforming piecewise affine space
$V_{h}=\operatorname{span}\{\phi_{z}: z\in\cZ_{h}^{\mathrm{act}}\}\subset \cV$,
where $\phi_{z}$ denotes the standard nodal basis function associated with $z$. Equivalently, $V_{h}$ is the image of the global Lagrange interpolant associated with $\cT_{h}$; see \cite[Definition~3.3.9]{brenner}. Thus $V_{h}$ is the usual $P_{1}$ finite-element space with the boundary condition built into the choice of active vertices.

As explained in the introduction, the box method keeps the conforming trial space $V_h$ and uses barycentric dual cells as control volumes. In order to define those cells, fix $K\in\cT_h$ and let $\cZ(K)$ denote the set of vertices of $K$. For each $z\in \cZ(K)$, we construct a region $\cA_z(K)\subset K$ as follows. We consider all $k$-dimensional faces of $K$, for $0\le k\le d$, that contain $z$, and collect their barycenters. Thus, this collection includes the vertex $z$ itself (viewed as a $0$-dimensional face), the barycenters of all edges containing $z$, the barycenters of all higher-dimensional faces containing $z$, and the barycenter of $K$ itself. If $q_1,\dots,q_{M_h}$ denotes an enumeration of these barycenters other than $z$, we define $\cA_z(K)=\operatorname{co}(z,q_1,\dots,q_{M_h}),$ where $\operatorname{co}(\cdot)$ denotes the convex hull. See Figure \ref{triangleK} for an illustration of this construction. The family $\{\cA_z(K)\}_{z\in\cZ(K)}$ partitions $K$ into $d+1$ subregions of equal $d$-dimensional Lebesgue measure. The dual control volume associated with an active vertex $z$ is then

\begin{equation*}
b_{z}=\bigcup_{K\in\cT_{h}: z\in K} \cA_{z}(K).
\end{equation*}
The set $\mathscr{B}_{h}=\{b_{z}\}_{z\in \cZ_{h}^{\mathrm{act}}}$ forms the vertex-centered dual mesh used by the box method; see also \cite{bank1987some,chatzipantelidis2002,jianguo1998finite}. Figure~\ref{fig:dual-mesh} illustrates the construction for $d=2$.

\begin{figure}[t]
    \centering
    \begin{minipage}{0.45\textwidth}
        \centering
        \begin{tikzpicture}[rotate=-45, scale=1.5]
            \coordinate (A) at (0,0);
            \coordinate (B) at (-0.5,2);
            \coordinate (C) at (1.5,2);

            \coordinate (M_AB) at ($(A)!0.5!(B)$);
            \coordinate (M_BC) at ($(B)!0.5!(C)$);
            \coordinate (M_CA) at ($(C)!0.5!(A)$);

            \tkzInterLL(A,M_BC)(B,M_CA) \tkzGetPoint{B_ABC}
            \tkzInterLL(B_ABC,C)(M_BC,M_CA) \tkzGetPoint{L}

            \draw[dashed] (M_AB) -- (B_ABC);
            \draw[dashed] (M_BC) -- (B_ABC);
            \draw[dashed] (M_CA) -- (B_ABC);

            \draw[thick] (A) -- (B) -- (C) -- cycle;

            \fill[fill=mygray] (B_ABC) -- (M_BC) -- (C) -- (M_CA) -- cycle;

            \fill (B_ABC) circle (1pt) node[below left] {$q_{3}$};
            \fill (C) circle (1pt) node[below right] {$z$};
            \fill (M_BC) circle (1pt) node[above right] {$q_2$};
            \fill (M_CA) circle (1pt) node[below] {$q_1$};
            \fill (B_ABC) node[pin={[pin distance=0.001cm,pin edge={}]-5:$\mathcal{A}_z(K)$}] {};

        \end{tikzpicture}
        \caption{Polygonal region $\mathcal{A}_{z}(K)$, in the case $d=2$, where $q_1$ and $q_2$ are the barycenter (midpoints) of the edges that contain $z$ and $q_{3}$ is the barycenter of $K$ itself.}
        \label{triangleK}
    \end{minipage}
    \hfill
    \begin{minipage}{0.45\textwidth}
        \centering
        \begin{tikzpicture}[rotate=-45]

            \coordinate (A) at (0,0);
            \coordinate (B) at (1.5,0.5);
            \coordinate (C) at (2.5,1.5);
            \coordinate (D) at (3,3);
            \coordinate (E) at (2,4);
            \coordinate (F) at (0.5,3.5);
            \coordinate (G) at (-0.5,2);
            \coordinate (O) at (1.5,2);

            \coordinate (M_OA) at ($(O)!0.5!(A)$);
            \coordinate (M_AB) at ($(A)!0.5!(B)$);
            \coordinate (M_BO) at ($(B)!0.5!(O)$);

            \tkzInterLL(A,M_BO)(B,M_OA) \tkzGetPoint{B_OAB}

            \coordinate (M_OB) at ($(O)!0.5!(B)$);
            \coordinate (M_BC) at ($(B)!0.5!(C)$);
            \coordinate (M_CO) at ($(C)!0.5!(O)$);

            \tkzInterLL(B,M_CO)(C,M_OB) \tkzGetPoint{B_OBC}

            \coordinate (M_OC) at ($(O)!0.5!(C)$);
            \coordinate (M_CD) at ($(C)!0.5!(D)$);
            \coordinate (M_DO) at ($(D)!0.5!(O)$);

            \tkzInterLL(C,M_DO)(D,M_OC) \tkzGetPoint{B_OCD}

            \coordinate (M_OD) at ($(O)!0.5!(D)$);
            \coordinate (M_DE) at ($(D)!0.5!(E)$);
            \coordinate (M_EO) at ($(E)!0.5!(O)$);

            \tkzInterLL(D,M_EO)(E,M_OD) \tkzGetPoint{B_ODE}

            \coordinate (M_OE) at ($(O)!0.5!(E)$);
            \coordinate (M_EF) at ($(E)!0.5!(F)$);
            \coordinate (M_FO) at ($(F)!0.5!(O)$);

            \tkzInterLL(E,M_FO)(F,M_OE) \tkzGetPoint{B_OEF}

            \coordinate (M_OF) at ($(O)!0.5!(F)$);
            \coordinate (M_FG) at ($(F)!0.5!(G)$);
            \coordinate (M_GO) at ($(G)!0.5!(O)$);

            \tkzInterLL(F,M_GO)(G,M_OF) \tkzGetPoint{B_OFG}

            \coordinate (M_OG) at ($(O)!0.5!(G)$);
            \coordinate (M_GA) at ($(G)!0.5!(A)$);
            \coordinate (M_AO) at ($(A)!0.5!(O)$);

            \tkzInterLL(G,M_AO)(A,M_OG) \tkzGetPoint{B_OGA}

            \coordinate (Out_AB) at ($(B_OAB)!1.5!(M_AB)$);
            \coordinate (Out_BC) at ($(B_OBC)!1.5!(M_BC)$);
            \coordinate (Out_CD) at ($(B_OCD)!1.5!(M_CD)$);
            \coordinate (Out_DE) at ($(B_ODE)!1.5!(M_DE)$);
            \coordinate (Out_EF) at ($(B_OEF)!1.5!(M_EF)$);
            \coordinate (Out_FG) at ($(B_OFG)!1.5!(M_FG)$);
            \coordinate (Out_GA) at ($(B_OGA)!1.5!(M_GA)$);

            \draw[thick] (A) -- (B) -- (C) -- (D) -- (E) -- (F) -- (G) -- cycle;

            \fill[fill=mygray] (M_OA) -- (B_OAB) -- (M_OB) -- (B_OBC) -- (M_OC) -- (B_OCD) -- (M_OD) -- (B_ODE) -- (M_OE) -- (B_OEF) -- (M_OF) -- (B_OFG) -- (M_OG) -- (B_OGA) -- cycle;

            \draw[dashed, gray] (M_OA) -- (B_OAB) -- (M_OB) -- (B_OBC) -- (M_OC) -- (B_OCD) -- (M_OD) -- (B_ODE) -- (M_OE) -- (B_OEF) -- (M_OF) -- (B_OFG) -- (M_OG) -- (B_OGA) -- cycle;

            \foreach \point in {A,B,C,D,E,F,G} {
                \draw (O) -- (\point);
            }

            \draw[dashed] (Out_AB) -- (B_OAB);
            \draw[dashed] (Out_BC) -- (B_OBC);
            \draw[dashed] (Out_CD) -- (B_OCD);
            \draw[dashed] (Out_DE) -- (B_ODE);
            \draw[dashed] (Out_EF) -- (B_OEF);
            \draw[dashed] (Out_FG) -- (B_OFG);
            \draw[dashed] (Out_GA) -- (B_OGA);

            \foreach \point in {A,B,C,D,E,F,G,O,M_OA, B_OAB, M_OB, B_OBC, M_OC, B_OCD, M_OD, B_ODE, M_OE, B_OEF, M_OF, B_OFG, M_OG, B_OGA} {
                \fill[black] (\point) circle (1pt);
            }
            \fill (O) circle (1pt) node[below right] {$z$};
        \end{tikzpicture}
        \vspace{-0.1cm}
        \caption{Dual element $b_{z}\in \mathscr{B}_{h}$, corresponding to $z\in \mathcal{Z}_{h}^{in}$ in the case $d=2$.}
        \label{fig:dual-mesh}
    \end{minipage}
\end{figure}

\subsection{The box method, the box bilinear form, and the load operator}
We now specialize the finite volume construction to the barycentric dual mesh. Recall from \eqref{eq:intro-box-bilinear} that the box bilinear form  $a_h^A$ is obtained by transporting local flux balances on the dual cells $b_z$ to the conforming space $V_h$. We now make this  explicit.

For $u_h\in V_h$, the local balance on the dual cell $b_z$ reads
\begin{equation}\label{eq:box-balance}
-\int_{\partial b_z} A\nabla u_h\cdot\eta\,dS+\int_{b_z}\kappa^2Qu_h\,dx=\int_{b_z}f\,dx,
\qquad z\in\cZ_h^{\mathrm{act}}.
\end{equation}
Since $Qu_h$ is constant on $b_z$ with value $u_h(z)$, the reaction term is the standard control-volume contribution. Thus \eqref{eq:box-balance} is the local finite-volume balance underlying the box method. Multiplying the balance on $b_z$ by $\chi(z)$ and summing over all active vertices yields the variational formulation \eqref{eq:intro-box} with bilinear form given by \eqref{eq:intro-box-bilinear}.

We begin with the case in which $B$ is piecewise constant on the primal mesh. In that situation, the flux representation of the box method reduces to a standard domain integral, which makes the symmetry of the form transparent and provides the basic structural identity used below.

\begin{lemma}\label{lem:box-piecewise-constant}
Assume that $B\in L^{\infty}(\cS;\R^{d\times d})$ is constant on each element $K\in\cT_h$. Then, for all $\chi,\psi\in V_h$, the corresponding box bilinear form satisfies
\begin{equation}\label{eq:box-identity}
a_h^{B}(\chi,\psi)
=
\int_{\cS} B\nabla\chi\cdot\nabla\psi\,dx
+
\int_{\cS}\kappa^2Q\chi\,Q\psi\,dx.
\end{equation}
In particular, if $B$ is symmetric, then $a_h^B$ is symmetric.
\end{lemma}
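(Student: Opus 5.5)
The reaction terms on the two sides of \eqref{eq:box-identity} coincide by definition, so the plan is to prove the flux identity
\begin{equation*}
-\sum_{z\in\cZ_h^{\mathrm{act}}}\psi(z)\int_{\partial b_z}B\nabla\chi\cdot\eta\,dS=\int_{\cS}B\nabla\chi\cdot\nabla\psi\,dx .
\end{equation*}
First I would localize to elements. Since $b_z=\bigcup_{K\ni z}\cA_z(K)$, the boundary integral over $\partial b_z$ splits into integrals over the interior dual facets $\partial\cA_z(K)\cap \operatorname{int}K$ plus contributions on $\partial\cS$. Because $B\nabla\chi$ is constant on each $K$, the contributions from the pieces $\cA_z(K)\cap\partial K$ shared by neighbouring $\cA_z(K),\cA_z(K')$ inside $b_z$ cancel; more precisely, they are not part of $\partial b_z$ at all. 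On the outer boundary, in the Neumann case the boundary faces of $b_z$ lie on $\partial\cS$. There I would write $\partial b_z\cap\partial\cS$ explicitly and keep those terms. In the Dirichlet case they do not arise for active $z$. So for each $K$ the quantity to compute is $-\int_{\partial\cA_z(K)\setminus\partial K}B_K\nabla\chi\cdot\eta\,dS$, together with boundary pieces.

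The cleanest route is to include all of $\partial\cA_z(K)$ and apply the divergence theorem to the constant field $w=B_K\nabla\chi|_K$ on $\cA_z(K)$. This gives $\int_{\partial\cA_z(K)}w\cdot\eta=0$, hence
\begin{equation*}
-\int_{\partial\cA_z(K)\setminus\partial K}w\cdot\eta\,dS=\int_{\partial\cA_z(K)\cap\partial K}w\cdot\eta\,dS .
\end{equation*}
The key geometric fact is the following. For each facet $F$ of $K$ containing $z$, the set $\cA_z(K)\cap F$ is the barycentric subcell of $F$ at $z$, and therefore has measure $|F|/d$. This follows because the barycentric subdivision of $F$ is the restriction of that of $K$, and the subcells of the $(d-1)$-simplex $F$ have equal measure. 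For the facet opposite $z$, the intersection is null. Hence
\begin{equation*}
\int_{\partial\cA_z(K)\cap\partial K}w\cdot\eta\,dS=\frac1d\sum_{F\ni z}|F|\,w\cdot n_F .
\end{equation*}
On the other hand, $\nabla\phi_z|_K=-\frac{|F_z|}{d|K|}n_{F_z}$, where $F_z$ is the facet opposite $z$. Since $\sum_{F}|F|n_F=0$, this equals $\frac{1}{d|K|}\sum_{F\ni z}|F|n_F$. Therefore the quantity above is $|K|\,w\cdot\nabla\phi_z|_K=\int_K B\nabla\chi\cdot\nabla\phi_z\,dx$.

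Multiplying by $\psi(z)$ and summing over $z\in\cZ(K)\cap\cZ_h^{\mathrm{act}}$ and over $K$ gives $\int_\cS B\nabla\chi\cdot\nabla\psi\,dx$ once $\psi=\sum\psi(z)\phi_z$ is used. The terms on $\partial b_z\cap\partial\cS$ in the Neumann case are exactly the pieces $\cA_z(K)\cap\partial K$ lying on $\partial\cS$. I would handle them by checking that they are already accounted for in the above bookkeeping, namely that interior element-boundary pieces cancel between adjacent elements inside $b_z$. I expect this bookkeeping of which faces belong to $\partial b_z$, together with the sign conventions, to be the main obstacle. I would resolve it by writing $\partial b_z$ as the symmetric difference of the $\partial\cA_z(K)$ modulo null sets.

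Symmetry of $a_h^B$ then follows immediately from \eqref{eq:box-identity} when $B=B^\top$.
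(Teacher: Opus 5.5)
Your element-level argument is correct, and it proves the paper's local flux identity (Lemma~\ref{supp:localflux}) by a different route. The paper maps to a reference simplex and computes the integrated normals of $\Gamma_{z,K}$ by hand, separately for $d=1,2,3$. You instead combine three ingredients: the divergence theorem on $\cA_z(K)$, the equal-measure facet subcells $\lvert\cA_z(K)\cap F\rvert=\lvert F\rvert/d$, and $\nabla\phi_z|_K=-\frac{\lvert F_z\rvert}{d\lvert K\rvert}n_{F_z}$ together with $\sum_F\lvert F\rvert n_F=0$. This works in every dimension at once. The restriction property you invoke is cleanest via the characterization $\cA_z(K)=\{x\in K:\lambda_z(x)\ge\lambda_w(x)\text{ for all vertices }w\}$ in barycentric coordinates, which also gives $\cA_z(K)\cap F_z=\emptyset$. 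The final summation over $K$ using $\psi|_K=\sum_z\psi(z)\phi_z$ is the same as in the paper.

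The step that would fail is your plan to keep the terms on $\partial b_z\cap\partial\cS$ in the Neumann case and show they are ``already accounted for''.

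\begin{itemize}
\item Your per-element identity only evaluates the flux through the interior interfaces $\Gamma_{z,K}=\partial\cA_z(K)\setminus\partial K$.
\item A piece $\cA_z(K)\cap F$ with $F\subset\partial\cS$ has no neighbouring element, so nothing cancels it.
\item It also survives in your symmetric-difference description of $\partial b_z$, because it lies on exactly one $\partial\cA_z(K)$.
\end{itemize}

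If you keep it, the $z$-th flux gains the extra term $-\sum_{F\subset\partial\cS,\,F\ni z}\frac{\lvert F\rvert}{d}\,B|_{K_F}\nabla\chi\cdot n_F$, where $K_F$ is the element containing $F$. Then \eqref{eq:box-identity} becomes false. Take $\psi\equiv1\in V_h$: the diffusion part of the left side becomes $-\int_{\partial\cS}B\nabla\chi\cdot n\,dS$, since interior interfaces cancel ($B\nabla\chi$ is constant on each $K$). The right side is $\int_\cS B\nabla\chi\cdot\nabla 1\,dx=0$. Concretely, $d=1$, nodes $0,\tfrac12,1$, $B\equiv1$, $\chi=\phi_{1/2}$ gives $4\neq0$.

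The resolution is the opposite of what you propose. With homogeneous Neumann data, the flux through $\partial b_z\cap\partial\cS$ is prescribed to be zero, so the integral in $a_h^B$ must be read over $\partial b_z\setminus\partial\cS=\bigcup_{K\ni z}\Gamma_{z,K}$. This is also the convention the paper uses implicitly: for $d=1$ it takes $\Gamma_{0,\widehat K}=\{1/2\}$, which contains no boundary point. State that convention and drop those terms, and your proof is complete. In the Dirichlet case, as you note, the issue does not arise.
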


Although stated for piecewise-constant coefficient fields, the identity~\eqref{eq:box-identity} connects directly to the variable-coefficient setting. Indeed, if $A$ is replaced by its elementwise average on each $K\in\cT_h$, one obtains a box bilinear form satisfying an identity of the same type. We record this separately, since it yields an explicit symmetric example within the class of box bilinear forms covered by our fractional framework.

\begin{lemma}\label{lem:avg-coefficient}
Let $\kappa\in W^{1,\infty}(\cS)$ and $A_h$ be the piecewise-constant matrix field defined on each element $K\in\cT_h$ by
$A_h|_K=\frac{1}{|K|}\int_K A(x)\,dx$.
Then, for all $\chi,\psi\in V_h$,
\begin{equation}\label{eq:avg-coefficient}
a_h^{A_h}(\chi,\psi)
=
\int_{\cS} A\nabla\chi\cdot\nabla\psi\,dx
+
\int_{\cS}\kappa^2Q\chi\,Q\psi\,dx.
\end{equation}
In particular, $a_h^{A_h}$ is symmetric.
\end{lemma}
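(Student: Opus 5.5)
Since $A_h$ is constant on each element $K\in\cT_h$, Lemma~\ref{lem:box-piecewise-constant} applies with $B=A_h$. It gives, for all $\chi,\psi\in V_h$,
\[
a_h^{A_h}(\chi,\psi)=\int_{\cS}A_h\nabla\chi\cdot\nabla\psi\,dx+\int_{\cS}\kappa^2Q\chi\,Q\psi\,dx .
\]
The reaction term already has the form required in \eqref{eq:avg-coefficient}. It therefore only remains to show that the diffusion term with $A_h$ coincides with the one with $A$.

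For this I split the integral elementwise,
\[
\int_{\cS}A_h\nabla\chi\cdot\nabla\psi\,dx=\sum_{K\in\cT_h}\int_K A_h|_K\nabla\chi|_K\cdot\nabla\psi|_K\,dx .
\]
Because $\chi,\psi\in V_h$ are affine on each $K$, their gradients $\nabla\chi|_K$ and $\nabla\psi|_K$ are constant vectors. Hence the integrand on $K$ equals
\[
|K|\,\bigl(A_h|_K\nabla\chi|_K\bigr)\cdot\nabla\psi|_K=\Bigl(\int_K A\,dx\,\nabla\chi|_K\Bigr)\cdot\nabla\psi|_K=\int_K A\nabla\chi\cdot\nabla\psi\,dx,
\]
where the last step uses linearity of the integral, the gradients being constant on $K$. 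Summing over $K$ yields \eqref{eq:avg-coefficient}.

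For symmetry, $A$ is symmetric pointwise, so each average $A_h|_K$ is symmetric. The symmetry statement of Lemma~\ref{lem:box-piecewise-constant} then applies directly. Alternatively, symmetry can be read off from \eqref{eq:avg-coefficient}, since both integrals on its right-hand side are symmetric in $\chi$ and $\psi$.

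There is no genuine obstacle here. The only point requiring care is that $A_h\in L^\infty$ is well defined, since $A$ is bounded and $|K|>0$, so that the lemma's hypotheses hold. The assumption $\kappa\in W^{1,\infty}$ is not needed beyond making the reaction term well defined.
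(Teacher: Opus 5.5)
Your proof is correct and is essentially the paper's argument: apply Lemma~\ref{lem:box-piecewise-constant} with $B=A_h$, then use that $\nabla\chi|_K$ and $\nabla\psi|_K$ are constant on each $K$, so $\int_K A_h|_K\nabla\chi\cdot\nabla\psi\,dx=\int_K A\nabla\chi\cdot\nabla\psi\,dx$, and read symmetry off the resulting formula. Your alternative symmetry argument, via symmetry of the cell averages $A_h|_K$, is also valid.
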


We next state the coercivity result for the physical coefficient field $A$. The proof is given in Section~\ref{sec:proofs}.

\begin{theorem}\label{thm:box-coercive}
Assume $d\in\{1,2,3\}$, $A\in W^{1,\infty}(\cS;\R^{d\times d})$, and that $A(x)$ is symmetric and uniformly elliptic. Let $A_h$ be as in Lemma \ref{lem:avg-coefficient}. Then there exist $c>0$ and $h_0>0$, independent of $h$, such that
\begin{equation}\label{eq:box-coercivity}
a_h^A(\chi,\chi)\ge c\norm{\chi}_{H^1(\cS)}^2 \hbox{ and } a_h^{A_h}(\chi,\chi)\ge c\norm{\chi}_{H^1(\cS)}^2, \quad
\chi\in V_h,
\quad
0<h<h_0.
\end{equation}
\end{theorem}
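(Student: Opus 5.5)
The plan is to treat $a_h^{A_h}$ first, using Lemma~\ref{lem:avg-coefficient}, and then obtain $a_h^A$ as a perturbation of $a_h^{A_h}$ of size $O(h)$.

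\emph{Coercivity of $a_h^{A_h}$.} By \eqref{eq:avg-coefficient},
\[
a_h^{A_h}(\chi,\chi)=\int_\cS A\nabla\chi\cdot\nabla\chi\,dx+\int_\cS\kappa^2(Q\chi)^2\,dx .
\]
Uniform ellipticity of $A$ bounds the first term below by $\alpha\seminorm{\chi}_1^2$, and $\kappa\ge\kappa_0$ bounds the second below by $\kappa_0^2\norm{Q\chi}^2$. It remains to show $\norm{Q\chi}\gtrsim\norm{\chi}$ on $V_h$, with a constant depending only on shape regularity. Elementwise, $\int_K(Q\chi)^2=\frac{|K|}{d+1}\sum_{z\in\cZ(K)}\chi(z)^2$, because the regions $\cA_z(K)$ have equal measure. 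This is the lumped mass on $K$, and it is spectrally equivalent to the consistent mass $\int_K\chi^2$ with constants independent of $K$. This holds without any shape condition, since after the affine map to the reference simplex both sides equal $|K|$ times a fixed quadratic form in the nodal values. Summing over $K$ gives $\norm{Q\chi}^2\ge c_d\norm{\chi}^2$. Hence $a_h^{A_h}(\chi,\chi)\ge\min(\alpha,\kappa_0^2c_d)\norm{\chi}_{H^1}^2$ for every $h$.

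\emph{Perturbation estimate.} Write $a_h^A(\chi,\chi)=a_h^{A_h}(\chi,\chi)+E(\chi,\chi)$ with
\[
E(\chi,\psi)=-\sum_{z}\psi(z)\int_{\partial b_z}(A-A_h)\nabla\chi\cdot\eta\,dS .
\]
I would reorganize $E$ elementwise. On each $K$, the interior pieces of $\partial b_z\cap K$ are the dual facets $\gamma$ separating $\cA_z(K)$ from $\cA_{z'}(K)$, and each such facet carries the jump $\psi(z)-\psi(z')$. Boundary facets of $K$ that lie inside $\cS$ cancel between the two neighbouring elements, because $\nabla\chi$ is single-valued on each side only elementwise; more precisely, every dual facet lies inside a single element, so no cancellation argument is needed. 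On $\partial\cS$, the parts of $\partial b_z$ on the boundary carry either $\psi(z)=0$ (Dirichlet) or must be handled directly (Neumann). Thus
\[
E(\chi,\psi)=\sum_K\sum_{\gamma\subset K}\bigl(\psi(z)-\psi(z')\bigr)\int_\gamma(A-A_h)\nabla\chi\cdot\eta\,dS+(\text{boundary terms}).
\]
On $K$ we have $|A-A_h|\le h_K\norm{A}_{W^{1,\infty}}$, $\nabla\chi|_K$ is constant, $|\psi(z)-\psi(z')|\le h_K|\nabla\psi|_K|$, and $|\gamma|\lesssim h_K^{d-1}$. Each interior term is therefore bounded by $h_K\cdot h_K^{d}|\nabla\chi|_K||\nabla\psi|_K|\lesssim h_K\int_K|\nabla\chi||\nabla\psi|$, using shape regularity. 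This gives $|E(\chi,\chi)|\lesssim h\seminorm{\chi}_1^2$.

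\emph{Boundary terms and conclusion.} The main obstacle is the boundary term in the Neumann case, $\sum_z\chi(z)\int_{\partial b_z\cap\partial\cS}(A-A_h)\nabla\chi\cdot\eta$, since there is no difference of nodal values to exploit. I would handle it by subtracting the elementwise mean of $\chi$: the quantity $\sum_z\int_{\partial b_z\cap\partial\cS}(A-A_h)\nabla\chi\cdot\eta$ restricted to $K$ is again an $O(h_K)$ flux. Alternatively, I would use the bound $|\chi(z)|\le\norm{\chi}_{L^\infty(K)}$ together with the inverse and trace estimate $h_K^{d-1}\norm{\chi}_{L^\infty(K)}|\nabla\chi|_K|\lesssim h_K^{-1}\norm{\chi}_{L^2(K)}\norm{\nabla\chi}_{L^2(K)}$. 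Combined with the factor $h_K$ coming from $A-A_h$, this gives a bound $\lesssim\norm{\chi}_{L^2(K)}\norm{\nabla\chi}_{L^2(K)}$, but without an extra power of $h$. To recover the small factor I would instead split $A-A_h=(A-A(z))+(A(z)-A_h)$ near the boundary, or use the trace inequality on the $O(h)$-wide boundary layer, $\norm{\chi}_{L^2(\partial\cS)}^2\lesssim\norm{\chi}\norm{\chi}_{H^1}$. This leads to $|E(\chi,\chi)|\lesssim h^{1/2}\norm{\chi}_{H^1}^2$, which still suffices. Altogether $a_h^A(\chi,\chi)\ge(c-Ch^{1/2})\norm{\chi}_{H^1}^2$, and choosing $h_0$ with $Ch_0^{1/2}\le c/2$ completes the proof.
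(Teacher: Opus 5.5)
Your proposal is correct and follows the paper's proof. You get coercivity of $a_h^{A_h}$ from Lemma~\ref{lem:avg-coefficient} together with $\norm{Q\chi}\gtrsim\norm{\chi}$ (which you verify directly via the elementwise lumped versus consistent mass, where the paper cites the literature), and then an $O(h)$ bound on $a_h^A-a_h^{A_h}$ from the antisymmetry of fluxes across the interior dual facets of each element; your jumps $\psi(z)-\psi(z')$ play the role of the paper's subtraction of $\bar\psi_K$. The Neumann boundary-flux terms you worry about do not actually arise: the flux integrals must be taken over the interior dual facets only, as in Lemma~\ref{supp:localflux}, since otherwise Lemma~\ref{lem:avg-coefficient} itself would fail. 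So your $h^{1/2}$ boundary-layer detour is unnecessary, though the boundary-layer bound you sketch for it is valid.
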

By the discussion in the introduction regarding the need for symmetric and global bilinear forms in order to naturally define fractional operators, we will consider the symmetric box problem, that is, the box problem associated with $A_h$. By Theorem~\ref{thm:box-coercive}, the symmetric nonfractional box problem
\begin{equation}\label{eq:box-variational}
a_{h}^{A_h}(u_{h},\chi)=\ip{f}{Q\chi}, \qquad \chi\in V_{h},
\end{equation}
has a unique solution for all sufficiently small $h$. We denote the associated nonfractional box solution operator by
\begin{equation}\label{eq:box-solution-operator}
T_{h}^{\mathrm{box}}:L^{2}(\cS)\to V_{h}, \qquad a_{h}^{A_h}(T_{h}^{\mathrm{box}}f,\chi)=\ip{f}{Q\chi},\quad \chi\in V_{h}.
\end{equation}
This is the discrete solution operator of the box method. For the moment we keep this concrete notation, since it is the nonfractional box operator that underlies the later fractional construction. In the next step we pass to a more general operator framework, in which the box method appears as one particular realization.

To express \eqref{eq:box-solution-operator} in operator form, and later extend it to fractional powers, we must specify how the discrete operator is realized on $V_h$. This requires an inner product on $V_h$. Let $\langle\cdot,\cdot\rangle_h$ be an inner product on $V_h$, and write $\norm{\chi}_h=\langle\chi,\chi\rangle_h^{1/2}$. Given a symmetric bilinear form $a_h$ on $V_h$, we define the operators $L_h:V_h\to V_h$, $P_h:L^2(\cS)\to V_h$, and $\Lambda_h:L^2(\cS)\to V_h$ through
\begin{equation}\label{eq:operators-LP}
\langle L_h\chi,\psi\rangle_h=a_h(\chi,\psi),
\qquad
\langle P_hf,\chi\rangle_h=\ip{f}{Q\chi},
\qquad
\langle \Lambda_hf,\chi\rangle_h=\ip{f}{\chi},
\end{equation}
for $\chi,\psi\in V_h$ and $f\in L^2(\cS)$. The operator $P_h$ is the load operator associated with the box method, since it acts against the piecewise constant test function $Q\chi$, whereas $\Lambda_h$ is the corresponding finite-element load operator. In particular, when $a_h=a_h^{A_h}$, the solution operator in \eqref{eq:box-solution-operator} takes the form $T_h^{\mathrm{box}}=L_h^{-1}P_h$. More generally, whenever $a_h$ is coercive on $V_h$, we write $T_h=L_h^{-1}P_h$ for the corresponding nonfractional box solution operator.

In the nonfractional case, the auxiliary inner product enters only through the operator representation. Indeed, the equation $L_hu_h=P_hf$ is equivalent to the variational formulation
$$
a_h(u_h,\chi)=\ip{f}{Q\chi},
\qquad \chi\in V_h,
$$
and therefore the resulting function $u_h\in V_h$ is independent of the particular Riesz map induced by $\langle\cdot,\cdot\rangle_h$. For fractional powers, the situation changes. The operator $L_h^{-\beta}$ is defined through the spectral calculus of $L_h$ as an operator on the Hilbert space $(V_h,\langle\cdot,\cdot\rangle_h)$. Consequently, the inner product is no longer only a representational choice; it becomes part of the discrete fractional operator. We therefore restrict attention to inner products that are uniformly equivalent and consistent with the $L^2$ inner product, and to symmetric bilinear forms that are uniformly consistent with the continuous form $a$. This leads to the following definitions.

\begin{definition}[Admissible inner product]\label{def:admissible-innerproduct}
An inner product $\langle\cdot,\cdot\rangle_h$ on $V_h$ is called \emph{admissible} if the following estimates hold uniformly in $h$:
\begin{enumerate}[label=(\roman*),leftmargin=2.1em]
\item
$\lvert \ip{\chi}{\psi}-\langle \chi,\psi\rangle_h\rvert
\lesssim h^{2}\seminorm{\chi}_{1}\seminorm{\psi}_{1}$ for $ \chi,\psi\in V_h;
$
\item
$
\norm{\chi}_h\simeq \norm{\chi}$ for $
\chi\in V_h.
$
\end{enumerate}
\end{definition}

\begin{definition}[Admissible bilinear form]\label{def:admissible-bilinear}
A symmetric bilinear form $a_h$ on $V_h$ is \emph{admissible} if it is continuous on $V_h\times V_h$ and satisfies the consistency estimate
\begin{equation}\label{eq:bilinear-consistency}
\lvert a(\chi,\psi)-a_h(\chi,\psi)\rvert
\lesssim h^{2}\seminorm{\chi}_{1}\seminorm{\psi}_{1},
\qquad \chi,\psi\in V_h,
\end{equation}
uniformly in $h$.
\end{definition}

\begin{remark}\label{rem:ah-coercive}
Every admissible bilinear form is coercive on $V_h$ for sufficiently small $h$. Indeed, since $a(\cdot,\cdot)$ is coercive on $\mathcal V\supset V_h$, Definition~\ref{def:admissible-bilinear} gives
$$
a_h(\chi,\chi)
\ge a(\chi,\chi)-Ch^2\seminorm{\chi}_1^2
\ge c\norm{\chi}_{H^1(\cS)}^2
$$
for all $\chi\in V_h$ and all sufficiently small $h$.
\end{remark}

The following examples show that the class of admissible inner products includes the standard choices used in practice.

\begin{example}\label{ex:inner-products}
The following three inner products will be used repeatedly.
\begin{enumerate}[label=(\roman*),leftmargin=2.1em]
\item the exact $L^{2}(\cS)$ inner product $\langle \chi,\psi\rangle_{h}=\ip{\chi}{\psi}$,
\item the mixed inner product
$\langle \chi,\psi\rangle_{h}=\ip{\chi}{Q\psi}=\sum_{z,w\in \cZ_{h}^{\mathrm{act}}}\chi(z)\psi(w)\int_{b_{w}}\phi_{z}(x)dx$, where $\phi_{z}$ denotes the nodal basis of $V_{h}$ and $b_{w}$ is the corresponding element of the dual mesh;
\item the lumped-mass inner product
$\langle \chi,\psi\rangle_{h}=\ip{Q\chi}{Q\psi}=\sum_{z\in \cZ_{h}^{\mathrm{act}}}\chi(z)\psi(z)|b_{z}|.$
\end{enumerate}
Their associated norms are uniformly equivalent to the $L^2(\cS)$ norm on $V_h$.
\end{example}

All inner products in Example~\ref{ex:inner-products} are admissible in the sense of Definition~\ref{def:admissible-innerproduct}; see, e.g., \cite{chatzipantelidis2002,ewing2002,Thomée2006}. Further non-trivial examples of admissible inner products will be provided in Proposition~\ref{prop:family-inner-products}.

We next record some elementary properties of the box load operator. These facts are independent of the fractional construction, but they clarify the role of $P_h$ and will be used later. In particular, they show that $P_h$ behaves like a projection only in a very restricted sense.

\begin{proposition}\label{prop:pseudo-proj}
Let $P_{h}$ and $\Lambda_{h}$ be defined by \eqref{eq:operators-LP}. Then
\begin{enumerate}[label=(\roman*),leftmargin=2.1em]
\item $P_{h}=P_{h}\Pi_{0,h}$, where $\Pi_{0,h}$ is the $L^{2}(\cS)$-orthogonal projector onto $V_{0,h}$;
\item If there exists $C>0$, independent of $h$, such that $\norm{Q\chi}\le C\norm{\chi}_{h}$ for all $\chi\in V_{h}$, then $P_{h}:L^{2}(\cS)\to V_{h}$ is bounded and $\norm{P_{h}f}_{h}\le C\norm{f}$;
\item $\ker P_{h}=\{f\in L^{2}(\cS): \Pi_{0,h}f|_{b_{z}}=0 \text{ for all } z\in \cZ_{h}^{\mathrm{act}}\}$ and $\operatorname{ran} P_{h}=V_{h}$;
\item The $L^{2}(\cS)$-adjoint of $P_{h}$ is $P_{h}^{*}=Q\Lambda_{h}$.
\end{enumerate}
\end{proposition}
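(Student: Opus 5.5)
The proof proceeds item by item, working directly from the defining relation $\langle P_hf,\chi\rangle_h=\ip{f}{Q\chi}$ in \eqref{eq:operators-LP}. The key structural fact is that $Q\chi\in V_{0,h}$ for every $\chi\in V_h$, and that $Q:V_h\to V_{0,h}$ is a linear isomorphism.

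For (i), since $Q\chi\in V_{0,h}$ and $\Pi_{0,h}$ is the $L^2$-orthogonal projector onto $V_{0,h}$, we have
\[
\ip{f}{Q\chi}=\ip{\Pi_{0,h}f}{Q\chi}.
\]
Hence $\langle P_hf,\chi\rangle_h=\langle P_h\Pi_{0,h}f,\chi\rangle_h$ for all $\chi\in V_h$, and therefore $P_hf=P_h\Pi_{0,h}f$.

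For (ii), take $\chi=P_hf$ in the defining relation. Cauchy--Schwarz and the hypothesis give
\[
\norm{P_hf}_h^2=\ip{f}{QP_hf}\le \norm{f}\,\norm{QP_hf}\le C\norm{f}\,\norm{P_hf}_h .
\]
Dividing by $\norm{P_hf}_h$, or noting the case $P_hf=0$ is trivial, yields $\norm{P_hf}_h\le C\norm{f}$.

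For (iii), $P_hf=0$ holds iff $\ip{f}{Q\chi}=0$ for all $\chi\in V_h$. Because $Q$ maps onto $V_{0,h}$, this is equivalent to $f\perp V_{0,h}$, i.e.\ $\Pi_{0,h}f=0$. Since the $b_z$ partition $\overline{\cS}$ up to null sets, this is the same as $\Pi_{0,h}f|_{b_z}=0$ for every active $z$. In the Dirichlet case the dual cells of active vertices need not cover all of $\cS$, which is why the statement is phrased cellwise, and I would note this explicitly.

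For the range, $\operatorname{ran}P_h=V_h$ follows from finite dimensionality. If $\operatorname{ran}P_h$ were a proper subspace, there would be a $\chi\neq0$ that is $\langle\cdot,\cdot\rangle_h$-orthogonal to it. Then $\ip{f}{Q\chi}=0$ for all $f\in L^2(\cS)$, and choosing $f=Q\chi$ gives $Q\chi=0$. Injectivity of $Q$ then forces $\chi=0$, a contradiction.

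For (iv), the adjoint is understood as a map $L^2(\cS)\to L^2(\cS)$ (viewing $V_h\subset L^2$ with $L^2$ pairing); I would take the convention the paper intends, pairing $P_h f\in V_h$ in $L^2$ with $g\in L^2$. Two ingredients are needed. First, for $\chi\in V_h$ and $g\in L^2(\cS)$, the definitions give
\[
\ip{\chi}{g}=\langle \Lambda_h g,\chi\rangle_h, \qquad \langle P_hf,\chi\rangle_h=\ip{f}{Q\chi}.
\]
Second, I need an identity linking $\ip{P_hf}{g}$ to $\langle P_hf,\Lambda_hg\rangle_h$. Indeed $\ip{P_hf}{g}=\langle\Lambda_hg,P_hf\rangle_h$ by the definition of $\Lambda_h$. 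Then, by the definition of $P_h$ with test function $\Lambda_hg$,
\[
\langle P_hf,\Lambda_hg\rangle_h=\ip{f}{Q\Lambda_hg}.
\]
Combining these and using symmetry of $\langle\cdot,\cdot\rangle_h$ gives $\ip{P_hf}{g}=\ip{f}{Q\Lambda_hg}$, so $P_h^*=Q\Lambda_h$.

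Each step is a short manipulation; the only point of care is (iii), namely making precise the description of $\ker P_h$ and the surjectivity of $Q$ onto $V_{0,h}$, which comes from the basis correspondence in \eqref{eq:intro-Q}.
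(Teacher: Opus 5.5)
Your proof is correct. For (i), (ii) and (iv) it follows the paper's argument essentially verbatim. For the kernel in (iii) you pass through $f\perp V_{0,h}$, which is fine, but the justification is slightly off. What you need is that $\Pi_{0,h}f\in V_{0,h}$ vanishes outside the active cells, so $\Pi_{0,h}f=0$ iff its value on each active $b_z$ is zero. That holds in both the Dirichlet and Neumann cases, and no covering property of the $b_z$ is involved. Your remark that the $b_z$ partition $\overline{\cS}$ is not the right reason.

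The one real difference is the proof that $\operatorname{ran}P_h=V_h$. You argue non-constructively: a nonzero $\langle\cdot,\cdot\rangle_h$-orthogonal complement of the range would force $\ip{f}{Q\chi}=0$ for all $f$, hence $Q\chi=0$ and $\chi=0$. The paper instead writes down the explicit preimage $f_\alpha=\sum_{z}\langle\alpha,\phi_z\rangle_h|b_z|^{-1}\mathbbm 1_{b_z}$. This gives a little more: every $\alpha\in V_h$ is attained by a load in $V_{0,h}$, so $P_h$ restricted to $V_{0,h}$ is already onto. Your argument is shorter and uses only the injectivity of $Q$.
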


The proof is routine after writing $\Pi_{0,h}f$ in the dual-cell basis and using the defining relations in \eqref{eq:operators-LP}. The detailed argument is given in Section~\ref{sec:proofs}. Since $\norm{Q\chi}\simeq\norm{\chi}$ on $V_h$, Definition~\ref{def:admissible-innerproduct}(ii) implies that the boundedness assumption in item~(ii) holds for every admissible inner product.

The next corollary makes precise the limited sense in which $P_h$ can be regarded as a projection. 

\begin{corollary}\label{cor:projection-characterization}
The operator $P_{h}$ is a projection if and only if $\langle\chi,\psi\rangle_{h}=\ip{\chi}{Q\psi}$ for all $\chi,\psi\in V_h$. Even in that case, $P_{h}$ is not an orthogonal projection on $L^{2}(\cS)$.
\end{corollary}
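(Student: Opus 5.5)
The plan is to reduce idempotency of $P_h$ to the condition that $P_h$ fixes $V_h$, using that $\operatorname{ran}P_h=V_h$ (Proposition~\ref{prop:pseudo-proj}(iii)). Here $P_h$ is viewed as an operator on $L^2(\cS)$ with values in $V_h\subset L^2(\cS)$, and "projection" means $P_h^2=P_h$.

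\emph{Step 1: idempotency is equivalent to $P_h|_{V_h}=\mathrm{id}$.} If $P_h\chi=\chi$ for all $\chi\in V_h$, then $P_h^2f=P_h(P_hf)=P_hf$ for every $f$, since $P_hf\in V_h$. Conversely, suppose $P_h^2=P_h$. Every $\chi\in V_h$ can be written as $\chi=P_hf$ for some $f$, by surjectivity onto $V_h$. Hence $P_h\chi=P_h^2f=P_hf=\chi$.

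\emph{Step 2: translate $P_h|_{V_h}=\mathrm{id}$ into a condition on the inner product.} By \eqref{eq:operators-LP}, $P_h\chi=\chi$ holds if and only if $\langle\chi,\psi\rangle_h=\langle P_h\chi,\psi\rangle_h=\ip{\chi}{Q\psi}$ for all $\psi\in V_h$. The forward direction is immediate. For the backward direction, the identity gives $\langle P_h\chi-\chi,\psi\rangle_h=0$ for all $\psi\in V_h$, and nondegeneracy of $\langle\cdot,\cdot\rangle_h$ on $V_h$ then yields $P_h\chi=\chi$. Requiring this for every $\chi\in V_h$ gives exactly the stated characterization $\langle\chi,\psi\rangle_h=\ip{\chi}{Q\psi}$. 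This is the mixed inner product of Example~\ref{ex:inner-products}(ii), which is assumed there to be a genuine inner product.

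\emph{Step 3: $P_h$ is not $L^2$-orthogonal.} An orthogonal projection on $L^2(\cS)$ with range $V_h$ must have kernel $V_h^{\perp}$. By Proposition~\ref{prop:pseudo-proj}(i),(iii), $\ker P_h=\{f:\Pi_{0,h}f=0\}=V_{0,h}^{\perp}$. Both $V_h$ and $V_{0,h}$ are finite dimensional and hence closed, so $V_{0,h}^\perp=V_h^\perp$ would force $V_{0,h}=V_h$. That is false: for an active vertex $z$, the indicator $\mathbbm 1_{b_z}$ lies in $V_{0,h}$ but is discontinuous across $\partial b_z\cap\cS\neq\emptyset$, so it is not in the continuous space $V_h$. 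Alternatively, Proposition~\ref{prop:pseudo-proj}(iv) gives $P_h^*=Q\Lambda_h$, whose range lies in $V_{0,h}$, so $P_h^*\neq P_h$.

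The only delicate point is Step 3. There one must check that $\partial b_z$ meets the interior of $\cS$, so that the indicator really is discontinuous. This holds for any nontrivial mesh, including the Neumann case, where only the sum of all indicators is continuous.
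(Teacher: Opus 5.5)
Your proposal is correct and follows the paper's proof. Steps 1--2 are exactly its argument via $\operatorname{ran}P_h=V_h$ and nondegeneracy of $\langle\cdot,\cdot\rangle_h$, and your alternative in Step 3 ($P_h^*=Q\Lambda_h$ has range in $V_{0,h}$) is the paper's non-orthogonality argument; your kernel formulation $\ker P_h=V_{0,h}^{\perp}\neq V_h^{\perp}$ is just the dual form of that argument, and it has the merit of making explicit why $V_{0,h}\neq V_h$, which the paper leaves implicit.
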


Corollary \ref{cor:projection-characterization} follows as a consequence of Proposition~\ref{prop:pseudo-proj}(iv). For a detailed proof, see Section~\ref{sec:proofs}.
The box bilinear form \eqref{eq:intro-box-bilinear} arises from local flux balances on the dual cells, a structure natural for the nonfractional problem.
However, as will be seen in Section~\ref{sec:discretizations}, fractional powers are defined through the associated discrete operator and therefore depend on its global structure rather than on individual control-volume balances.
This is why, in the present setting, we seek box-method bilinear forms that admit a global representation on $V_h$. Moreover, in view of
Definition~\ref{def:admissible-bilinear},
we require these forms to be symmetric, so that the induced discrete operator is selfadjoint and the spectral calculus can be applied to define its fractional powers.
Lemma~\ref{lem:avg-coefficient} provides a first example of this mechanism: when $A$ is replaced by its elementwise average, the local box form \eqref{eq:intro-box-bilinear} can be rewritten in the global form \eqref{eq:avg-coefficient}. More generally, the class of bilinear forms relevant for our fractional box-method includes both such averaged-coefficient constructions and fully quadrature-based global approximations:

\begin{example}\label{ex:global-box-bilinears}
Examples of admissible box-method bilinear forms with global representations are:
\begin{enumerate}[label=(\alph*)]
\item If $A_h$ denotes the elementwise average of $A$, then Lemma~\ref{lem:avg-coefficient} shows that the corresponding box bilinear form admits the global representation \eqref{eq:avg-coefficient}. In particular, it is symmetric and therefore fits naturally into the operator framework used later for fractional powers.

\item A fully quadrature-based example is obtained by using $Q$ as a quadrature rule on the box bilinear form $a_h^{A_h}$. On each element $K$, this rule is exact for affine functions and second-order accurate for smooth integrands. For example, a sufficient condition is $A\in W^{1,\infty}(\cS;\R^{d\times d})$ and $\kappa\in W^{2,\infty}(\cS)$, see e.g.\ \cite{FIX1972525} and \cite[Lemma 2.3]{Liang2001}. This leads to the global bilinear form
\begin{equation}\label{eq:quadrature-example}
a_h(\chi,\psi)
=
Q\bigl(A\nabla\chi\cdot\nabla\psi\bigr)
+
Q\bigl(\kappa^2\chi\psi\bigr).
\end{equation}
This form is written directly as a global quadrature approximation of the continuous bilinear form \eqref{eq:continuous-bilinear}. When symmetric, it also induces a selfadjoint discrete operator, and is therefore equally suitable for the spectral definition of fractional powers. By the standard quadrature-error estimates of \cite{strangfix2008, FIX1972525} and the FEM bounds in \cite{Thomée2006}, this choice satisfies Definition~\ref{def:admissible-bilinear}.
\end{enumerate}
\end{example}

\section{Fractional finite-element and box-method discretizations}\label{sec:discretizations}

By the assumptions on $A$ and $\kappa$, the bilinear form $a$ in \eqref{eq:continuous-bilinear} is continuous and coercive on $\cV$. Hence there exists an isomorphism $L:\cV\to\cV'$ such that
\begin{equation*}
\langle Lu,v\rangle_{\cV'\times\cV}=a(u,v), \qquad u,v\in \cV.
\end{equation*}

When realized on $L^{2}(\cS)$, the operator $L$ is positive, selfadjoint, and has compact inverse. Let $\{(\lambda_{j},\varphi_{j})\}_{j\ge 1}$ be the corresponding $L^{2}(\cS)$-orthonormal eigensystem.

For $s\ge 0$, we define fractional power of $L$ by spectral calculus: for $v\in D(L^{s/2})$,
$L^{s/2}v=\sum_{j\ge1}\lambda_j^{s/2}(v,\varphi_j)\varphi_j$, where
\begin{equation}\label{eq:doth-positive}
\dot H^{s}
:=D(L^{s/2})
=
\left\{v\in L^{2}(\cS): \sum_{j\ge1}\lambda_j^{s}\lvert (v,\varphi_j)\rvert^2<\infty\right\},\quad
\norm{v}_{s}^2
:=
\sum_{j\ge1}\lambda_j^{s}\lvert (v,\varphi_j)\rvert^2.
\end{equation}
For $s<0$, we define $\dot H^{s}$ as the completion of $L^{2}(\cS)$ with respect to the norm in \eqref{eq:doth-positive}; equivalently, $\dot H^{s}$ is the dual of $\dot H^{-s}$ with $L^{2}(\cS)$ as pivot space. This is the scale that will be used throughout the paper. In particular, if $f\in L^{2}(\cS)$ and $\beta>0$, then
$u_{\beta}=L^{-\beta}f\in \dot H^{2\beta}$
and
$\norm{u_{\beta}}_{2\beta}=\norm{f}$.
The error estimates proved later are measured in the norms $\norm{\cdot}_{\delta}$ with $0\le \delta\le 1$.

We now pass from the nonfractional FEM and box schemes to their fractional analogues. The key bookkeeping point is that three ingredients must be kept separate: the symmetric bilinear form $a_{h}$, the auxiliary inner product $\langle\cdot,\cdot\rangle_{h}$ used to realize the discrete operator on $V_{h}$, and the load operator $P_h$. For $\beta=1$ this distinction is largely hidden because the solution is recovered from a variational equation. For $\beta\neq1$, by contrast, the functional calculus is applied directly to the discrete operator, so the choice of Riesz map becomes part of the approximation.

From now on $a_{h}$ denotes an admissible symmetric bilinear form in the sense of Definition~\ref{def:admissible-bilinear}
and $\langle\cdot,\cdot\rangle_{h}$ denotes an admissible inner product in the sense of
Definition~\ref{def:admissible-innerproduct}.
The corresponding discrete operator $L_{h}:V_{h}\to V_{h}$ is defined by \eqref{eq:operators-LP}; by Remark~\ref{rem:ah-coercive}, $L_{h}$ is positive and selfadjoint on the Hilbert space $(V_{h},\langle\cdot,\cdot\rangle_{h})$ for all sufficiently fine meshes, so $L_{h}^{-\beta}$ is well defined by the spectral theorem for every $\beta>0$. Because $L_{h}$ is finite dimensional and positive, the operator $L_{h}^{-\beta}$ also admits the standard holomorphic contour representation, see \cite{Haase2006}.

The finite-element fractional discretization based on an admissible inner product $\langle\cdot,\cdot\rangle_{h}$ is the direct Galerkin analogue of the classical fractional FEM construction and serves as the reference scheme against which the box discretization will be compared. It is given, in terms of our discrete operators, by
\begin{equation}\label{eq:fem-discrete}
\bar u_{\beta,h}=L_{h}^{-\beta}\Lambda_{h}f.
\end{equation}
The box fractional discretization keeps the same discrete operator but replaces the exact $L^{2}(\cS)$ load projector by the box load operator $P_{h}$, thereby retaining the control-volume averaging that characterizes the box method already at the nonfractional level.

\begin{definition}\label{def:box-general}
For $\beta>0$, the \emph{box-method fractional discretization} associated with $a_{h}$ and $\langle\cdot,\cdot\rangle_{h}$ is
\begin{equation}\label{eq:box-general}
u_{\beta,h}=T_{h,\beta}f:=L_{h}^{-\beta}P_{h}f.
\end{equation}
When $\beta=1$, this reduces to the nonfractional discrete solution operator $T_{h}=L_{h}^{-1}P_{h}$.
\end{definition}

\begin{remark}\label{rem:beta1-independent}
For $\beta=1$, the discrete solution operator $T_{h}=L_{h}^{-1}P_{h}$ is independent of the auxiliary inner product on $V_{h}$. Indeed, the equation $L_{h}u_{h}=P_{h}f$ is equivalent to the variational statement
\begin{equation*}
a_{h}(u_{h},\chi)=\ip{f}{Q\chi}, \qquad \chi\in V_{h},
\end{equation*}
which involves neither the Riesz map of $\langle\cdot,\cdot\rangle_{h}$ nor the representation matrices of $L_{h}$ and $P_{h}$. For $\beta\neq1$, the functional calculus is performed on $L_{h}$ itself, and the dependence on $\langle\cdot,\cdot\rangle_{h}$ becomes genuine.
\end{remark}

\subsection{Matrix representation and inner-product dependence}

The preceding remark is transparent in coordinates. Fix the nodal basis $\{\phi_{j}\}_{j=1}^{N_{h}}$ of $V_{h}$ and write $[\chi]\in\R^{N_{h}}$ for the coordinate vector of $\chi\in V_{h}$. Let
$\mathbf M_{h}=(\langle \phi_{j},\phi_{i}\rangle_{h})_{i,j}$ and
$\mathbf K_{h}=(a_{h}(\phi_{j},\phi_{i}))_{i,j}$,
and, for $f\in L^{2}(\cS)$, define
$\mathbf F_{Q}(f)=\bigl(\ip{f}{Q\phi_{i}}\bigr)_{i=1}^{N_{h}}$ and
$\mathbf F(f)=\bigl(\ip{f}{\phi_{i}}\bigr)_{i=1}^{N_{h}}$.
The matrix $\mathbf M_{h}$ depends on the chosen inner product, whereas $\mathbf K_{h}$ depends only on $a_{h}$.

\begin{proposition}\label{prop:matrix-representation}
Let $\beta \in \{1,2,3,\ldots\}$. With the notation above,
\begin{equation}\label{relation}
[L_{h}\chi]=\mathbf M_{h}^{-1}\mathbf K_{h}[\chi],
\qquad
[P_{h}f]=\mathbf M_{h}^{-1}\mathbf F_{Q}(f),
\qquad
[\Lambda_{h}f]=\mathbf M_{h}^{-1}\mathbf F(f).
\end{equation}
Consequently,
\begin{equation}\label{eq:matrix-fractional-representation}
[u_{\beta,h}]=\bigl(\mathbf M_{h}^{-1}\mathbf K_{h}\bigr)^{-\beta}\mathbf M_{h}^{-1}\mathbf F_{Q}(f),
\qquad
[\bar u_{\beta,h}]=\bigl(\mathbf M_{h}^{-1}\mathbf K_{h}\bigr)^{-\beta}\mathbf M_{h}^{-1}\mathbf F(f).
\end{equation}
For $\beta=1$, the dependence on $\mathbf M_{h}$ cancels and
\begin{equation}\label{eq:matrix-nonfractional}
[u_{1,h}]=\mathbf K_{h}^{-1}\mathbf F_{Q}(f).
\end{equation}
\end{proposition}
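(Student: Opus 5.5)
The plan is to derive the three identities in \eqref{relation} directly from the defining relations \eqref{eq:operators-LP} by testing against basis functions. We use the symmetry of the Gram matrix $\mathbf M_h$, which holds because $\langle\cdot,\cdot\rangle_h$ is an inner product.

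For $\chi\in V_h$, write $L_h\chi=\sum_j [L_h\chi]_j\phi_j$ and test the first relation in \eqref{eq:operators-LP} with $\psi=\phi_i$. This gives
\[
\sum_j \langle\phi_j,\phi_i\rangle_h [L_h\chi]_j=\sum_j a_h(\phi_j,\phi_i)[\chi]_j,
\]
that is, $\mathbf M_h[L_h\chi]=\mathbf K_h[\chi]$. The matrix $\mathbf M_h$ is the Gram matrix of a basis with respect to an inner product, so it is symmetric positive definite and hence invertible. This yields the first identity. The same testing with $\chi=\phi_i$ in the second and third relations of \eqref{eq:operators-LP} gives $\mathbf M_h[P_hf]=\mathbf F_Q(f)$ and $\mathbf M_h[\Lambda_hf]=\mathbf F(f)$.

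For \eqref{eq:matrix-fractional-representation}, the coordinate map $\chi\mapsto[\chi]$ is a linear isomorphism that intertwines $L_h$ with $\mathbf M_h^{-1}\mathbf K_h$. It therefore intertwines compositions and inverses: $[L_h^{k}\chi]=(\mathbf M_h^{-1}\mathbf K_h)^{k}[\chi]$ for integers $k$. Here $L_h$ is invertible for small $h$ by Remark~\ref{rem:ah-coercive}, and then $\mathbf K_h$, and hence $\mathbf M_h^{-1}\mathbf K_h$, is invertible as well. Since $\beta$ is a positive integer, the spectral-calculus power $L_h^{-\beta}$ coincides with the $\beta$-fold composition of $L_h^{-1}$. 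Applying this to $\chi=P_hf$ and $\chi=\Lambda_hf$ and inserting the second and third identities gives \eqref{eq:matrix-fractional-representation}.

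The case $\beta=1$ is then the cancellation
\[
(\mathbf M_h^{-1}\mathbf K_h)^{-1}\mathbf M_h^{-1}=\mathbf K_h^{-1}\mathbf M_h\mathbf M_h^{-1}=\mathbf K_h^{-1},
\]
which gives \eqref{eq:matrix-nonfractional}.

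No step is a genuine obstacle. The only point needing care is the identification of the spectral-calculus power $L_h^{-\beta}$ with the matrix power for integer $\beta$. This holds because functional calculus is multiplicative and agrees with polynomial and inverse operations. That is exactly why the proposition is restricted to integer $\beta$, where no choice of matrix square root or diagonalization basis enters.
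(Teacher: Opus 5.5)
Your proposal is correct and follows essentially the same route as the paper: test the defining relations \eqref{eq:operators-LP} against the nodal basis to get $\mathbf M_h[L_h\chi]=\mathbf K_h[\chi]$, $\mathbf M_h[P_hf]=\mathbf F_Q(f)$ and $\mathbf M_h[\Lambda_hf]=\mathbf F(f)$. Then you carry integer powers through the coordinate isomorphism and cancel $\mathbf M_h$ at $\beta=1$; with the convention $(\mathbf M_h)_{ij}=\langle\phi_j,\phi_i\rangle_h$ you do not even need the symmetry of $\mathbf M_h$.

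One small caveat on your closing remark: integrality is convenient but not what makes the identity true. Since $\mathbf M_h^{-1}\mathbf K_h$ is similar to the symmetric positive definite matrix $\mathbf M_h^{-1/2}\mathbf K_h\mathbf M_h^{-1/2}$, the same intertwining holds for the principal power at every real $\beta>0$.
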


\begin{proof}
By definition, $\langle L_{h}\chi,\phi_{i}\rangle_{h}=a_{h}(\chi,\phi_{i})$. Writing this relation in terms of coordinates, we obtain
$\mathbf M_{h}[L_{h}\chi]=\mathbf K_{h}[\chi]$,
which gives the first relation in \eqref{relation}. The other two follow from the definitions of $P_{h}$ and $\Lambda_{h}$ in the same way. Formula \eqref{eq:matrix-fractional-representation} is then immediate from Definitions~\ref{def:box-general} and \eqref{eq:fem-discrete}, and \eqref{eq:matrix-nonfractional} follows by setting $\beta=1$.
\end{proof}

Proposition~\ref{prop:matrix-representation} pinpoints the source of the difficulty: the nonfractional discrete solution depends only on $\mathbf K_h$ and the associated load vector, whereas the fractional map depends on the similarity class of $\mathbf M_h^{-1}\mathbf K_h$. In this sense, once fractional powers are introduced, the inner product becomes part of the approximation itself. The same matrix perspective also makes clear that quadrature and mass lumping are not merely cosmetic implementation choices, but give rise to genuinely different discretizations of the same continuous problem.

A useful benchmark is the classical FEM discretization. If $\widehat T_{h}=\widehat L_{h}^{-1}\Pi_{h}$ denotes the nonfractional FEM solution operator associated with the exact form $a$ and the exact $L^{2}(\cS)$ inner product, then $\widehat T_{h}$ is selfadjoint and nonnegative on $L^{2}(\cS)$, and
\begin{equation}\label{eq:classical-fem-identity}
(\widehat T_{h})^{\beta}=\widehat L_{h}^{-\beta}\Pi_{h},
\qquad \beta>0.
\end{equation}
Indeed, $\Pi_{h}$ is the orthogonal projector onto the invariant subspace $V_{h}$ and $\widehat L_{h}$ is positive selfadjoint on $V_{h}$, so the spectral calculus may be performed either on $\widehat T_{h}$ or on $\widehat L_{h}$. Identity \eqref{eq:classical-fem-identity} provides the finite-element analogue of the intrinsic construction we seek for the box method.

The next proposition explains why the operator $QT_{h}$ is the natural box-method counterpart of $\widehat T_{h}$.
\begin{proposition}\label{prop:QT-selfadjoint}
Let $T_{h}=L_{h}^{-1}P_{h}$ be the nonfractional box solution operator. Then $QT_{h}:L^{2}(\cS)\to V_{0,h}\subset L^{2}(\cS)$ is selfadjoint and nonnegative.
\end{proposition}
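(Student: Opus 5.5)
Since $Q T_h f$ is built entirely from the nonfractional variational problem, the plan is to express $(QT_hf,g)$ through $a_h$ alone and read off symmetry and positivity from the symmetry and coercivity of $a_h$. For $f,g\in L^2(\cS)$ set $u=T_hf$ and $w=T_hg$, both in $V_h$. By Remark~\ref{rem:beta1-independent}, $u$ is characterized, independently of the auxiliary inner product, by
\begin{equation*}
a_h(u,\chi)=\ip{f}{Q\chi}, \qquad \chi\in V_h,
\end{equation*}
and likewise $a_h(w,\chi)=\ip{g}{Q\chi}$. The whole argument rests on testing these two equations with each other's solutions.

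First take $\chi=w$ in the equation for $u$ and use the $L^2$ symmetry of the inner product:
\begin{equation*}
(QT_hf,g)=\ip{g}{Qu}=a_h(w,u).
\end{equation*}
Exchanging the roles of $f$ and $g$ gives $(f,QT_hg)=\ip{f}{Qw}=a_h(u,w)$. Since $a_h$ is admissible it is symmetric, so $a_h(u,w)=a_h(w,u)$ and therefore $(QT_hf,g)=(f,QT_hg)$. Because $QT_h$ is bounded, this shows it is selfadjoint on $L^2(\cS)$. Boundedness holds because $V_h$ is finite dimensional, or alternatively by Proposition~\ref{prop:pseudo-proj}(ii) together with the boundedness of $L_h^{-1}$.

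For nonnegativity, set $g=f$ in the first identity:
\begin{equation*}
(QT_hf,f)=a_h(u,u)\ge c\norm{u}_{H^1(\cS)}^2\ge 0.
\end{equation*}
The inequality is the coercivity of Remark~\ref{rem:ah-coercive}, valid for sufficiently small $h$, which is already implicit in $T_h$ being well defined.

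There is no genuine obstacle. The only point needing care is that the pairing must be arranged so that $Q$ lands on the discrete solution $u$, not on $f$. That is exactly what the box load functional $\ip{f}{Q\chi}$ with $\chi=w$ supplies. The range statement $QT_h:L^2(\cS)\to V_{0,h}$ is immediate from the definition of $Q$.
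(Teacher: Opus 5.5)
Your proof is correct and is essentially the paper's argument: test the defining relation $a_h(T_hf,\chi)=\ip{f}{Q\chi}$ with $\chi=T_hg$, use the symmetry of $a_h$ to get $\ip{f}{QT_hg}=\ip{g}{QT_hf}$, and set $f=g$ to obtain nonnegativity from $a_h(T_hf,T_hf)\ge 0$. One small caveat: finite dimensionality of $V_h$ alone does not make an operator defined on $L^2(\cS)$ bounded. Your alternative justification via Proposition~\ref{prop:pseudo-proj}(ii), or simply the bound $|\ip{f}{Q\chi}|\le\norm{f}\norm{Q\chi}$, does give boundedness, and in any case an everywhere-defined symmetric operator is automatically bounded.
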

\begin{proof}
For any $f,g\in L^{2}(\cS)$, the defining relation for $T_{h}$ gives
\begin{equation}\label{rlxc}
\ip{f}{QT_{h}g}=a_{h}(T_{h}f,T_{h}g)=a_{h}(T_{h}g,T_{h}f)=\ip{g}{QT_{h}f},
\end{equation}
which shows that $QT_{h}$ is selfadjoint. Taking $f=g$ in \eqref{rlxc}, we obtain
\begin{equation*}
\ip{f}{QT_{h}f}=a_{h}(T_{h}f,T_{h}f)\ge 0,
\end{equation*}
and the nonnegativity of $QT_{h}$ follows.
\end{proof}

Proposition~\ref{prop:QT-selfadjoint} shows that the nonfractional box solution operator already possesses the spectral structure needed to define fractional powers, but only after composition with $Q$. The remaining question is whether this construction can be brought back to the primal finite-element space without introducing an arbitrary auxiliary inner product. The next definition singles out the mass-lumped inner product and the next theorem shows that it is this choice that makes the fractional box discretization intrinsic, in the sense that it is determined by the nonfractional box solution operator.

\begin{definition}\label{def:box-intrinsic}
Let $\langle\chi,\psi\rangle_{Q}=\ip{Q\chi}{Q\psi}$ be the lumped-mass inner product on $V_{h}$, and let $L_{h,Q}$ and $P_{h,Q}$ be the operators defined by \eqref{eq:operators-LP} with this inner product. The corresponding \emph{intrinsic box-method discretization} is
$u_{\beta,h}^Q= T_{h,\beta}^Qf:=(L_{h,Q})^{-\beta}P_{h,Q}f$.
\end{definition}

\begin{theorem}\label{thm:lumped-characterization}
For every $\beta>0$,
$T_{h,\beta}^Q=Q^{-1}(QT_{h})^{\beta}$,
where $T_{h}=L_{h}^{-1}P_{h}$ is the nonfractional box solution operator.
In particular, $T_{h,\beta}^Q$ depends only on $T_{h}$ and therefore, by Remark~\ref{rem:beta1-independent}, is independent of the auxiliary inner product used to represent the nonfractional problem.
\end{theorem}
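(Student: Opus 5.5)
The idea is to transport the lumped-mass realization on $V_h$ to the subspace $V_{0,h}\subset L^2(\cS)$ by $Q$, where it becomes the restriction of $QT_h$. First, $Q:(V_h,\langle\cdot,\cdot\rangle_Q)\to (V_{0,h},(\cdot,\cdot))$ is a unitary isomorphism, since $\langle\chi,\psi\rangle_Q=\ip{Q\chi}{Q\psi}$ by definition and $Q$ is bijective. Let $L_{h,Q}$ and $P_{h,Q}$ be as in Definition~\ref{def:box-intrinsic}, and set $\widetilde L:=QL_{h,Q}Q^{-1}$ on $V_{0,h}$. This operator is positive and selfadjoint with respect to the $L^2$ inner product restricted to $V_{0,h}$. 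Since the spectral calculus commutes with unitary equivalence,
\[
Q(L_{h,Q})^{-\beta}Q^{-1}=\widetilde L^{-\beta}\qquad\text{for every }\beta>0.
\]

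Next I identify $QP_{h,Q}$. For $f\in L^2(\cS)$ and $\chi\in V_h$,
\[
\ip{QP_{h,Q}f}{Q\chi}=\langle P_{h,Q}f,\chi\rangle_Q=\ip{f}{Q\chi}.
\]
Hence $QP_{h,Q}f$ is the element of $V_{0,h}$ with the same inner products against $V_{0,h}$ as $f$, that is, $QP_{h,Q}=\Pi_{0,h}$, the $L^2$-orthogonal projector onto $V_{0,h}$. By Remark~\ref{rem:beta1-independent}, $T_h$ does not depend on the inner product, so we may compute it with $\langle\cdot,\cdot\rangle_Q$:
\[
QT_h=QL_{h,Q}^{-1}P_{h,Q}=\widetilde L^{-1}\Pi_{0,h}.
\]

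Now I compute $(QT_h)^\beta$. The operator $QT_h$ is selfadjoint and nonnegative on $L^2(\cS)$ by Proposition~\ref{prop:QT-selfadjoint}. Its form $\widetilde L^{-1}\Pi_{0,h}$ shows that it vanishes on $V_{0,h}^\perp$ and acts as the positive operator $\widetilde L^{-1}$ on $V_{0,h}$. This is exactly the same structure as in \eqref{eq:classical-fem-identity}. The spectral calculus on the orthogonal decomposition $L^2(\cS)=V_{0,h}\oplus V_{0,h}^\perp$ then gives
\[
(QT_h)^\beta=\widetilde L^{-\beta}\Pi_{0,h},
\]
with the convention $0^\beta=0$ on the kernel, which is the one used for $\beta>0$. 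Combining the steps,
\[
Q^{-1}(QT_h)^\beta=Q^{-1}\widetilde L^{-\beta}QP_{h,Q}=(L_{h,Q})^{-\beta}P_{h,Q}=T^Q_{h,\beta}.
\]
The right-hand side $Q^{-1}(QT_h)^\beta$ is built from $T_h$ alone, and by Remark~\ref{rem:beta1-independent} $T_h$ does not depend on the auxiliary inner product. This gives the final claim.

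The main point needing care is the justification of $(QT_h)^\beta=\widetilde L^{-\beta}\Pi_{0,h}$, namely that fractional powers of an operator vanishing on $V_{0,h}^\perp$ and invariant on $V_{0,h}$ are computed blockwise. This holds because $V_{0,h}$ reduces the selfadjoint operator $QT_h$. Everything else is bookkeeping with the defining relations \eqref{eq:operators-LP}.
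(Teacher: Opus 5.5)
Your proof is correct and follows essentially the same route as the paper: conjugate by the unitary $Q$, identify $QP_{h,Q}=\Pi_{0,h}$ so that $QT_h$ is the conjugated inverse operator extended by zero on $V_{0,h}^\perp$, and compute its fractional power blockwise. Two small points in your favour: your choice $\widetilde L=QL_{h,Q}Q^{-1}$ is the consistent one (the paper defines $S_h$ by conjugating $L_{h,Q}^{-1}$ but then uses it as the conjugate of $L_{h,Q}$), and your explicit appeal to Remark~\ref{rem:beta1-independent} to compute $T_h$ with $\langle\cdot,\cdot\rangle_Q$ states a step that the paper uses tacitly.
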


\begin{proof}
We work with the lumped-mass inner product. Also, recall that $\Pi_{0,h}$ is the $L^{2}(\cS)$-orthogonal projector onto $V_{0,h}$. Since $Q: (V_{h},\langle\cdot,\cdot\rangle_{Q})\to (V_{0,h},\ip{\cdot}{\cdot})$ is an isometric isomorphism, we have $Q^*=Q^{-1}$. Hence $Q$ implements a unitary equivalence between these two Hilbert spaces, and therefore the operator
\begin{equation*}
S_{h}=Q L_{h,Q}^{-1} Q^{-1}:V_{0,h}\to V_{0,h}
\end{equation*}
is positive and selfadjoint on $V_{0,h}$. Moreover, for every $f\in L^2(\cS)$ and $\chi\in V_h$,
$$\langle P_{h,Q}f,\chi\rangle_Q=\ip{f}{Q\chi}
=\ip{\Pi_{0,h}f}{Q\chi}
=\langle Q^{-1}\Pi_{0,h}f,\chi\rangle_Q.$$
Since this holds for all $\chi\in V_h$, it follows that $P_{h,Q}=Q^{-1}\Pi_{0,h}$, and therefore
$$QT_h=Q L_{h,Q}^{-1}P_{h,Q}=Q L_{h,Q}^{-1}Q^{-1}\Pi_{0,h}=S_h^{-1}\Pi_{0,h}.$$

Recall that $Q$ implements a unitary equivalence between $L_{h,Q}^{-1}$ on $V_h$ and $S_h=Q L_{h,Q}^{-1}Q^{-1}$ on $V_{0,h}$. By the spectral calculus, fractional powers are preserved under unitary equivalence, and thus
$S_h^{-\beta}=Q L_{h,Q}^{-\beta}Q^{-1}$.
Moreover, $\Pi_{0,h}$ is the $L^2(\cS)$-orthogonal projector onto $V_{0,h}$, and $S_h^{-1}$ is positive selfadjoint on $V_{0,h}$. Thus $S_h^{-1}\Pi_{0,h}$ is precisely the extension of $S_h^{-1}$ to $L^2(\cS)$ by zero on $V_{0,h}^{\perp}$. Therefore, its fractional powers are given by the same extension of the fractional powers of $S_h^{-1}$, and hence
$$(QT_h)^\beta=(S_h^{-1}\Pi_{0,h})^\beta=S_h^{-\beta}\Pi_{0,h}=Q L_{h,Q}^{-\beta}Q^{-1}\Pi_{0,h}.$$
Applying $Q^{-1}$ and using again $P_{h,Q}=Q^{-1}\Pi_{0,h}$ yields
$Q^{-1}(QT_h)^\beta=(L_{h,Q})^{-\beta}P_{h,Q}= T_{h,\beta}^Q.$
\end{proof}

Theorem~\ref{thm:lumped-characterization} is the precise form of the structural statement announced in the introduction: among all admissible inner products, the lumped-mass choice is exactly the one that turns the box-method fractional discretization into a construction intrinsic to the nonfractional box solution operator.
We also retain the family of inner products proposed in \cite{voet2023mathematical}. 

\begin{proposition}\label{prop:family-inner-products}
Let $N_{h}=\dim V_{h}$ and let $\mathbf M$ be the mass matrix, $\mathbf M_{jk}=\ip{\phi_{j}}{\phi_{k}}$.
For $0\le \ell\le N_h-1$, write $\mathbf M=\mathbf D_{\ell}+\mathbf R_{\ell}$, where $\mathbf D_{\ell}$ contains the entries of $\mathbf M$ within bandwidth $\ell$, and $\mathbf R_{\ell}$ is the remainder. Let $\mathcal L$ denote the mass-lumping operator, defined on a matrix $\mathbf A$ by
\begin{equation*}
\mathcal L(\mathbf A)_{mn} = \delta_{mn}\sum_{k=1}^{N_h}\mathbf A_{mk},
\end{equation*}
where $\delta_{mn}$ is the Kronecker delta, and define $\mathbf M_{\ell}=\mathbf D_{\ell}+\mathcal L(\mathbf R_{\ell})$. Then
\begin{equation}\label{ind_norm}
\langle \chi,\psi\rangle_{\ell}=[\chi]^T\mathbf M_{\ell}[\psi],\qquad\chi,\psi\in V_h,
\end{equation}
defines an admissible inner product in the sense of Definition~\ref{def:admissible-innerproduct}.
\end{proposition}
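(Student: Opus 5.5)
The plan is to write $\mathbf M_\ell$ as a perturbation of the exact mass matrix by a weighted graph Laplacian. Since $\mathbf D_\ell=\mathbf M-\mathbf R_\ell$, we have
\begin{equation*}
\mathbf M_\ell=\mathbf M+\bigl(\mathcal L(\mathbf R_\ell)-\mathbf R_\ell\bigr),
\qquad
\langle\chi,\psi\rangle_\ell-\ip{\chi}{\psi}=[\chi]^T\bigl(\mathcal L(\mathbf R_\ell)-\mathbf R_\ell\bigr)[\psi].
\end{equation*}
Both admissibility conditions will then follow from a single estimate on the matrix $\mathbf E_\ell:=\mathcal L(\mathbf R_\ell)-\mathbf R_\ell$.

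First I record the structural facts. $\mathbf R_\ell$ is symmetric, because the band splitting of the symmetric matrix $\mathbf M$ is symmetric. Its entries are nonnegative, because $P_1$ mass entries $\ip{\phi_j}{\phi_k}$ are nonnegative. Moreover, $(\mathbf R_\ell)_{mk}\ne0$ only if $m\ne k$ and the vertices $z_m,z_k$ share an element $K$. The bandwidth refers to the index ordering rather than to geometry, but this is harmless: every nonzero entry of $\mathbf R_\ell$ is an entry of $\mathbf M$, so the adjacency property is inherited. In the Dirichlet case, all sums run over active indices only, which changes nothing below. For such a matrix, the row-sum lumping yields the standard identity
\begin{equation*}
[x]^T\mathbf E_\ell[y]=\tfrac12\sum_{m,k}(\mathbf R_\ell)_{mk}\,(x_m-x_k)(y_m-y_k).
\end{equation*}
Consequently $\mathbf E_\ell$ is symmetric positive semidefinite. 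Since $\mathbf M$ is symmetric positive definite, $\mathbf M_\ell$ is symmetric positive definite as well, so \eqref{ind_norm} defines an inner product.

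Next comes the local bound. Fix an adjacent pair $m,k$. Choose an element $K$ containing both $z_m$ and $z_k$. Since $\chi$ is affine on $K$, we get $|\chi(z_m)-\chi(z_k)|\le h_K|\nabla\chi|_K|$. In addition, $(\mathbf R_\ell)_{mk}\le \mathbf M_{mk}\lesssim \sum_{K'\ni z_m,z_k}|K'|$. By shape regularity, each pair is counted in at most boundedly many elements. I would therefore bound the identity above term by term:
\begin{equation*}
\bigl|[\chi]^T\mathbf E_\ell[\psi]\bigr|\lesssim\sum_{K\in\cT_h}|K|\,h_K^2\,|\nabla\chi|_K|\,|\nabla\psi|_K|
=\sum_K h_K^2\norm{\nabla\chi}_{L^2(K)}\norm{\nabla\psi}_{L^2(K)}.
\end{equation*}
Using $h_K\le h$ and Cauchy--Schwarz then gives condition (i) of Definition~\ref{def:admissible-innerproduct}, with a constant independent of $\ell$ and $h$.

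For condition (ii), positive semidefiniteness gives the lower bound $\norm{\chi}_\ell^2\ge\norm{\chi}^2$. For the upper bound, I keep the local form of the estimate, $[\chi]^T\mathbf E_\ell[\chi]\lesssim\sum_K h_K^2\norm{\nabla\chi}_{L^2(K)}^2$. The elementwise inverse inequality $h_K\norm{\nabla\chi}_{L^2(K)}\lesssim\norm{\chi}_{L^2(K)}$, which needs only shape regularity, then yields $\norm{\chi}_\ell^2\lesssim\norm{\chi}^2$.

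I expect the main obstacle to be the careful bookkeeping in the local step. One must justify the Laplacian identity when the sums are restricted to active indices and the lumping is taken over $\mathbf R_\ell$ only. One must also make sure that the bound on $(\mathbf R_\ell)_{mk}$ and the finite-overlap count are uniform in $\ell$ and in the arbitrary node ordering. The remaining steps are routine.
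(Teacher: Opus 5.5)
Your proof is correct, and it takes a genuinely different route from the paper.

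\textbf{The paper's route.} The paper argues in the Loewner order. It cites \cite[Lemma~3.19]{voet2023mathematical}, namely $-\mathcal L(\mathbf A)\preceq\mathbf A\preceq\mathcal L(\mathbf A)$ for symmetric nonnegative $\mathbf A$, and uses linearity of $\mathcal L$ to get the sandwich $\mathbf M\preceq\mathbf M_\ell\preceq\mathcal L(\mathbf M)$. This gives
\begin{equation*}
0\le\norm{\chi}_\ell^2-\norm{\chi}^2\le\norm{\chi}_Q^2-\norm{\chi}^2\lesssim h^2\seminorm{\chi}_1^2,
\end{equation*}
where the last step is the lumped-mass estimate of \cite[Lemma~5.1]{chatzipantelidis2002}. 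The paper then obtains the bilinear bound (i) by Cauchy--Schwarz for the nonnegative symmetric form $\langle\cdot,\cdot\rangle_\ell-\ip{\cdot}{\cdot}$. Norm equivalence (ii) is inherited from the lumped-mass product.

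\textbf{Your route.} You prove everything from scratch. Your graph-Laplacian identity for $\mathcal L(\mathbf R_\ell)-\mathbf R_\ell$ is exactly the mechanism behind Voet's lemma, and it gives the lower half of the sandwich. Your elementwise bound replaces both the cited lumped-mass estimate and the Cauchy--Schwarz step, because you bound the bilinear form directly. The upper half $\mathbf M_\ell\preceq\mathcal L(\mathbf M)$ also follows from your identity, applied to the off-diagonal part of $\mathbf D_\ell$. You never need it, since the inverse inequality gives you the upper bound in (ii).

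\textbf{What each buys.} The paper's route is shorter given the citations. It also exhibits the family as monotonically interpolating between the consistent and the lumped mass matrix. Your route is self-contained and yields the local bound $\sum_K h_K^2\norm{\nabla\chi}_{L^2(K)}\norm{\nabla\psi}_{L^2(K)}$. It uses only shape regularity, and its constants are visibly uniform in $\ell$ and in the node ordering. It also avoids a small imprecision in the paper. In the Dirichlet case, $\mathcal L(\mathbf M)$ takes row sums over active indices only, so it is not the lumped-mass matrix $\operatorname{diag}(|b_z|)$; it is only dominated by it. The paper's chain still holds, but only with $\preceq$ at that step.

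\textbf{Bookkeeping.} In the local step, do not fix one element $K$ for the gradient and then invoke shape regularity to count elements per edge. Instead use the exact formula $\mathbf M_{mk}=\sum_{K\ni z_m,z_k}|K|/((d+1)(d+2))$, and for each $K$ in that sum bound $|\chi(z_m)-\chi(z_k)|$ by the gradient on that same $K$. After swapping the sums, each element contributes at most $d(d+1)$ ordered vertex pairs. No comparison of neighbouring element sizes is then needed.
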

\begin{proof}
Given matrices $\textbf{A},\textbf{B}\in\mathbb{R}^{N_{h}\times N_h}$, we write $\textbf{A}\preceq\textbf{B}$ (respectively, $\textbf{A}\prec\textbf{B}$) if $\textbf{B}-\textbf{A}$ is positive semidefinite (respectively, positive definite). By \cite[Lemma 3.19]{voet2023mathematical},
\begin{equation}\label{s_eq:4:60}
    -\mathcal{L}(\textbf{A}) \preceq \textbf{A} \preceq \mathcal{L}(\textbf{A})
\end{equation}
for every symmetric matrix with positive entries $\textbf{A}\in\mathbb{R}^{N_{h}\times N_h}$. Observing that $\mathcal{L}(\textbf{M})=\mathcal{L}(\textbf{M}_{\ell})=\mathcal{L}(\textbf{D}_{\ell}+\mathcal{L}(\textbf{R}_{\ell}))$, it follows from \eqref{s_eq:4:60} that
\begin{equation}\label{s_eq:4:61}
 0\prec \textbf{M}= \textbf{D}_{\ell} + \textbf{R}_{\ell} \preceq \textbf{D}_{\ell} + \mathcal{L}(\textbf{R}_{\ell}) = \textbf{M}_{\ell} \preceq \mathcal{L}(\textbf{M})
\end{equation}
for every $0\le \ell\le N_{h}-1$. Since the norms induced by $\textbf{M}$ and $\mathcal{L}(\textbf{M})$, in the sense of \eqref{ind_norm}, correspond to the $L^{2}$ inner product and the lumped-mass inner product $\langle\cdot,\cdot\rangle_{Q}$ over $V_{h}$, respectively, it follows from \eqref{s_eq:4:61} that $\|\cdot\|_{\ell}:=\sqrt{\langle\cdot,\cdot\rangle_{\ell}}$ is equivalent to $\|\cdot\|$, with bounding constants independent of $h$. Again, from \eqref{s_eq:4:61},
\begin{equation}\label{s_eq:4:62}
    0\le \|\phi\|^{2}_{\ell}-\|\phi\|^{2} \le \|\phi\|_{Q}^{2}-\|\phi\|^{2}\lesssim h^{2}|\phi|_{1}^{2}, \quad \forall \phi \in V_{h},
\end{equation}
where the last estimate holds due to \cite[Lemma 5.1]{chatzipantelidis2002}. What remains is to extend \eqref{s_eq:4:62} to the inner product $\langle\cdot,\cdot\rangle_{\ell}$. To this end, define the auxiliary form
$b(\chi,\psi):=\langle \chi,\psi \rangle_{\ell}-(\chi,\psi)$
and note that $b$ is bilinear, symmetric and, due to \eqref{s_eq:4:62}, $b(\chi,\chi)\geq 0$ for every $\chi \in V_{h}$. Hence the Cauchy--Schwarz inequality applies to $b$, and we obtain
\begin{equation*}
|\langle \chi,\psi \rangle_{\ell}-(\chi,\psi)|=|b(\chi,\psi)|\le b(\chi,\chi)^{\frac{1}{2}}b(\psi,\psi)^{\frac{1}{2}}\lesssim h^{2}|\chi|_{1}|\psi|_{1}.
\qedhere
\end{equation*}
\end{proof}

\begin{remark}
In Proposition~\ref{prop:family-inner-products}, we have $\|\cdot\|=\|\cdot\|_{N_h-1}$ and $\|\cdot\|_{0}=\|\cdot\|_{Q}$, where $\langle\cdot,\cdot\rangle_{Q}=\ip{Q\cdot}{Q\cdot}$ is the lumped-mass inner product of Definition~\ref{def:box-intrinsic}.
\end{remark}

\section{Error analysis}\label{sec:error}

In this section we add one further mesh assumption: the family $\{\cT_{h}\}$ is quasi-uniform in the sense stated at the beginning of Section~\ref{sec:setting}. This is the standard hypothesis under which the global inverse estimate $\seminorm{\chi}_{1}\lesssim h^{-1}\norm{\chi}$ and the spectral bound $\sup\sigma(L_{h})\lesssim h^{-2}$ hold uniformly. The structural results from the previous sections did not require this stronger hypothesis.

Fix any admissible inner product $\langle\cdot,\cdot\rangle_{h}$. We now compare the FEM discretization based on $\langle\cdot,\cdot\rangle_{h}$, namely \eqref{eq:fem-discrete}, and the box-method discretization based on the same inner product, namely \eqref{eq:box-general}, with the exact solution $u_{\beta}=L^{-\beta}f$. Let $\widehat L_{h}$ denote the operator induced, in the sense of \eqref{eq:operators-LP}, by the exact form $a$ and the exact $L^{2}(\cS)$ inner product on $V_{h}$, and let $\Pi_{h}$ be the $L^{2}(\cS)$-orthogonal projector onto $V_{h}$. Similarly, we write $\widetilde L_{h}$ for the operator induced by $a_{h}$ and the exact $L^{2}(\cS)$ inner product. Thus
\begin{equation}\label{expressionsss}
\widehat u_{\beta,h}=\widehat L_{h}^{-\beta}\Pi_{h}f,
\qquad
\widetilde u_{\beta,h}=\widetilde L_{h}^{-\beta}\Pi_{h}f,
\qquad
\bar u_{\beta,h}=L_{h}^{-\beta}\Lambda_{h}f.
\end{equation}
The first approximation is the classical FEM discretization; the second incorporates the quadrature error in the bilinear form while retaining the exact $L^{2}(\cS)$ projector; and the third further accounts for the change of inner product. The next lemma records the $L^{2}(\cS)$ estimate for the first of these perturbation steps; its proof is given in Section~\ref{sec:proofs}.

\begin{lemma}\label{lem:beta1-bilinear}
Assume an admissible bilinear form and let $\widehat u_{h}=\widehat L_{h}^{-1}\Pi_{h}f$ and $\widetilde u_{h}=\widetilde L_{h}^{-1}\Pi_{h}f$. Then, 
$\norm{\widehat u_{h}-\widetilde u_{h}}\lesssim h^{2}\norm{f}$ for sufficiently small $h$.
\end{lemma}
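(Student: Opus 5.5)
We compare the two Galerkin solutions through a first Strang-type argument and then lift the resulting energy bound to $L^2$ by duality. Both functions are characterized variationally. Since $\widehat L_h$ and $\widetilde L_h$ are realized with the exact $L^2$ inner product, $\widehat u_h\in V_h$ satisfies $a(\widehat u_h,\chi)=\ip{f}{\chi}$ and $\widetilde u_h\in V_h$ satisfies $a_h(\widetilde u_h,\chi)=\ip{f}{\chi}$, for all $\chi\in V_h$; here we used $\ip{\Pi_h f}{\chi}=\ip{f}{\chi}$. Set $e_h=\widehat u_h-\widetilde u_h\in V_h$. Subtracting the two relations gives the error equation
\begin{equation*}
a(e_h,\chi)=a_h(\widetilde u_h,\chi)-a(\widetilde u_h,\chi),\qquad \chi\in V_h .
\end{equation*}
By Definition~\ref{def:admissible-bilinear}, the right-hand side is bounded by $Ch^2\seminorm{\widetilde u_h}_1\seminorm{\chi}_1$.

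The next step is a uniform energy bound for $\widetilde u_h$. By Remark~\ref{rem:ah-coercive}, for small $h$ we have $c\norm{\widetilde u_h}_{H^1}^2\le a_h(\widetilde u_h,\widetilde u_h)=\ip{f}{\widetilde u_h}\le\norm{f}\norm{\widetilde u_h}$. Hence $\seminorm{\widetilde u_h}_1\lesssim\norm{f}$.

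The $L^2$ estimate then follows by duality, and this is where an extra factor would be lost if one were careless. I would not apply an inverse inequality. Instead, let $z\in\cV$ solve $a(v,z)=\ip{v}{e_h}$ for all $v\in\cV$, and let $z_h\in V_h$ be its Galerkin projection with respect to $a$. Since $e_h\in V_h$, Galerkin orthogonality gives
\begin{equation*}
\norm{e_h}^2=a(e_h,z)=a(e_h,z_h)=a_h(\widetilde u_h,z_h)-a(\widetilde u_h,z_h)\lesssim h^2\seminorm{\widetilde u_h}_1\seminorm{z_h}_1 .
\end{equation*}
Stability of the Galerkin projection gives $\seminorm{z_h}_1\lesssim\norm{z}_{H^1}\lesssim\norm{e_h}$. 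Combining this with the energy bound yields $\norm{e_h}^2\lesssim h^2\norm{f}\norm{e_h}$, and therefore $\norm{e_h}\lesssim h^2\norm{f}$.

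The main subtlety is therefore not regularity: only $H^1$-stability of the dual problem is needed, so convexity of $\cS$ is not even used here. Rather, it is recognizing that the consistency estimate in Definition~\ref{def:admissible-bilinear} already carries the full $h^2$ factor against $H^1$ seminorms. Testing with the discrete dual solution $z_h$ (rather than with $e_h$ itself) then avoids any $h^{-1}$ loss.

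Alternatively, one can test the error equation with $\chi=e_h$ to get $\seminorm{e_h}_1\lesssim h^2\norm{f}$, which immediately gives the $L^2$ bound through $\norm{e_h}\le\norm{e_h}_{H^1}$ and coercivity of $a$. This is an even shorter route, and I would present it as the primary argument.
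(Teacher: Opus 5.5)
Your primary argument, testing the error equation $a(e_h,\chi)=(a_h-a)(\widetilde u_h,\chi)$ with $\chi=e_h$ and using coercivity of $a$, the consistency bound of Definition~\ref{def:admissible-bilinear}, and $\norm{\widetilde u_h}_{H^1(\cS)}\lesssim\norm{f}$ from Remark~\ref{rem:ah-coercive}, is correct and is exactly the paper's proof; it in fact gives the stronger bound $\norm{e_h}_{H^1(\cS)}\lesssim h^2\norm{f}$. Your duality argument is also valid but unnecessary: as you note yourself, the consistency estimate already carries the full $h^2$ factor against $H^1$ seminorms, so no inverse inequality is needed and no power of $h$ is at risk.
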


The next theorem states the rough-data estimate for the finite-element discretization under an arbitrary admissible inner product.

\begin{theorem}
\label{thm:fem-rough}

Given an admissible bilinear form and an admissible inner product.
Let $f\in L^{2}(\cS)$, let ${0\le \delta\le 1}$, and assume $2\beta>\delta$. Then, for all sufficiently small $h$,
\begin{equation}\label{eq:fem-rough}
\norm{u_{\beta}-\bar u_{\beta,h}}_{\delta}
\lesssim
h^{\min\{2\beta-\delta,2-\delta\}}
\left(\log \frac1h\right)^{[2\beta-\delta=2,\,\delta>0]}
\norm{f}.
\end{equation}
\end{theorem}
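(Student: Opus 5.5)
We split the error along the three approximations in \eqref{expressionsss}:
\begin{equation*}
u_\beta-\bar u_{\beta,h}=(u_\beta-\widehat u_{\beta,h})+(\widehat u_{\beta,h}-\widetilde u_{\beta,h})+(\widetilde u_{\beta,h}-\bar u_{\beta,h}).
\end{equation*}
The first term is the classical fractional FEM error with exact form and exact $L^2$ inner product. For it we invoke the known rough-data estimate (Bonito--Pasciak type, via the Dunford--Taylor/Balakrishnan representation $L^{-\beta}=\frac{\sin(\pi\beta)}{\pi}\int_0^\infty \mu^{-\beta}(\mu+L)^{-1}\,d\mu$ for $\beta<1$, reduced to this case for larger $\beta$ by writing $L^{-\beta}=L^{-\lfloor\beta\rfloor}L^{-\{\beta\}}$). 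It gives $h^{\min\{2\beta-\delta,2-\delta\}}$ with the logarithm exactly in the borderline case $2\beta-\delta=2$, $\delta>0$. The remaining two terms live in $V_h$. By the inverse inequality and norm equivalence on $V_h$ under quasi-uniformity, it suffices to bound them in $L^2$ by $h^{2}\|f\|$ and in $H^1$ by $h\|f\|$, and then interpolate to get $h^{2-\delta}$.

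For the second and third terms we work at the resolvent level. All three discrete operators are selfadjoint positive with spectra in $[c,Ch^{-2}]$, uniformly in $h$, by Remark~\ref{rem:ah-coercive} and the admissibility norm equivalences. We write each fractional power through the contour integral $\frac{1}{2\pi i}\int_\Gamma z^{-\beta}(z-\cdot)^{-1}dz$ around $[c/2,\infty)$ and compare the resolvents.

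For $\widehat L_h$ versus $\widetilde L_h$, the resolvent identity together with the consistency estimate \eqref{eq:bilinear-consistency} gives
\begin{equation*}
\langle(\widehat L_h^{-1}-\widetilde L_h^{-1})\Pi_h g,\chi\rangle=(a_h-a)(\widehat L_h^{-1}\Pi_h g,\widetilde L_h^{-1}\chi)\lesssim h^2|\cdot|_1|\cdot|_1.
\end{equation*}
This generalizes Lemma~\ref{lem:beta1-bilinear}. For $z\in\Gamma$ it yields $\|(z-\widehat L_h)^{-1}-(z-\widetilde L_h)^{-1}\|\lesssim h^2\,\|\widehat L_h^{1/2}(z-\widehat L_h)^{-1}\|\,\|\widetilde L_h^{1/2}(z-\widetilde L_h)^{-1}\|$ in $L^2$. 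Integrating against $|z|^{-\beta}$ needs care for small $\beta$, because the integrand behaves like $|z|^{-\beta-1}$ times factors of $h^2|z|$ up to $|z|\sim h^{-2}$. This naive approach produces $h^{2\beta}$ when $\beta<1$. That rate is still sufficient, since the target rate is $\min\{2\beta,2\}-\delta$ and a logarithm appears only at $\beta=1$.

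The third term, the change of inner product from $(\cdot,\cdot)$ to $\langle\cdot,\cdot\rangle_h$, is handled the same way. We identify $L_h$ through $\langle L_h\chi,\psi\rangle_h=a_h(\chi,\psi)$ and $\Lambda_h$ through $\langle\Lambda_h f,\chi\rangle_h=(f,\chi)$. The resolvent $(z-L_h)^{-1}\Lambda_h f$ solves $z\langle w,\chi\rangle_h-a_h(w,\chi)=(f,\chi)$, while $(z-\widetilde L_h)^{-1}\Pi_h f$ solves the same problem with $(\cdot,\cdot)$ in place of $\langle\cdot,\cdot\rangle_h$. Their difference is therefore driven by $z\,(\langle w,\chi\rangle_h-(w,\chi))$, which is bounded by $|z|\,h^2|w|_1|\chi|_1$ using Definition~\ref{def:admissible-innerproduct}(i). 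The same contour bound then applies, and the factor $|z|$ is absorbed by a resolvent estimate of the form $\|L_h^{1/2}(z-L_h)^{-1}\|\lesssim|z|^{-1/2}$ in the appropriate energy norm.

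The main obstacle is obtaining the sharp rate uniformly in $\beta$ in the last two terms. A crude contour bound gives an extra $\log(1/h)$ at $\beta=1$ even when $\delta=0$, whereas the theorem asserts a logarithm only when $\delta>0$. To avoid this, we first try to treat $\beta\ge1$ by factoring $L_h^{-\beta}=L_h^{-(\beta-1)}L_h^{-1}$ and using the $\beta=1$ estimates (Lemma~\ref{lem:beta1-bilinear} together with its inner-product analogue, which is exact because of Remark~\ref{rem:beta1-independent}). This gives a clean $h^2$ in $L^2$. We use the contour argument only for $\beta<1$, where the rate $h^{2\beta-\delta}$ carries no logarithm.
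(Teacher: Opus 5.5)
Your outline is the paper's proof. You use the same split $E_1+E_2+E_3$ along \eqref{expressionsss}, cite the classical estimate for $E_1$, and write a Dunford--Taylor contour representation with all spectra in $[c,Ch^{-2}]$. You control the resolvent differences by the two $h^2|\cdot|_1|\cdot|_1$ consistency estimates, and you use the exact identity $\widetilde L_h^{-1}\Pi_h=L_h^{-1}\Lambda_h$ to remove the logarithm at $\beta=1$, $\delta=0$.

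Your variant is to prove only an $L^2$ bound $h^{\min\{2\beta,2\}}\|f\|$ for $E_2,E_3$ and then apply $\|\chi\|_\delta\lesssim h^{-\delta}\|\chi\|$ on $V_h$. This is legitimate, and it avoids the paper's equivalence of $\|\cdot\|_\delta$ with $\|\widetilde L_h^{\delta/2}\cdot\|$ on $V_h$. Two remarks on it:
\begin{itemize}
\item No separate $H^1$ bound is needed.
\item An $L^2$ bound $h^2$ for $E_3$ is not available when $\beta<1$. The inner-product change is an $O(1)$ relative perturbation at eigenvalues of size $h^{-2}$.
\end{itemize}

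The step that does not work as written is the treatment of $\beta\ge1$ by factoring. With $G_h=\widetilde L_h^{-1}\Pi_h=L_h^{-1}\Lambda_h$ you get
\[
E_3=\bigl(\widetilde L_h^{-(\beta-1)}-L_h^{-(\beta-1)}\bigr)G_hf .
\]
This is again a comparison of fractional powers of two operators that are selfadjoint in different inner products. The $\beta=1$ results (Lemma~\ref{lem:beta1-bilinear} and the exact identity) say nothing about it. Treating $G_hf$ merely as $L^2$ data in a contour bound with exponent $\beta-1<1$ loses powers of $h$, so it does not give the claimed clean $h^2$ for non-integer $\beta>1$. You would have to exploit the discrete smoothness of $G_hf$, for example via $(z-A)^{-1}A^{-1}=z^{-1}\bigl[(z-A)^{-1}+A^{-1}\bigr]$. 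That simply reproduces the direct contour integral with weight $z^{-\beta}$.

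The detour is also unnecessary. Your own estimate
\[
\bigl\|(z-\widetilde L_h)^{-1}\Pi_hf-(z-L_h)^{-1}\Lambda_hf\bigr\|\lesssim h^2\|f\|
\]
holds uniformly on the contour. Integrated against $|z|^{-\beta}$, it gives $h^2\int_r^\infty t^{-\beta}\,dt\lesssim h^2$ for every $\beta>1$. The logarithm arises only at $\beta=1$ exactly, where $E_3\equiv0$ and $E_2=O(h^2)$ by Lemma~\ref{lem:beta1-bilinear}. This is how the paper argues: the contour estimate is used for all $\beta$, and the exact identity only at $\beta=1$, $\delta=0$.

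A minor misattribution: the $h^{2\beta}$ barrier you describe for the second term really belongs to $E_3$. For $E_2$, your resolvent bound gives $\lesssim h^2|z|^{-1}$, because the bilinear-form perturbation is relatively $O(h^2)$. That is integrable against $|z|^{-\beta}$ for all $\beta>0$, so $\|E_2\|\lesssim h^2\|f\|$ for every $\beta$. The extra factor $|z|$ that forces truncation near $|z|\sim h^{-2}$ appears only in the inner-product term. This does not affect your conclusion, since $h^{2\beta}$ would suffice.
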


We can improve the rate from Theorem \ref{thm:fem-rough} by requiring the data to be smooth:

\begin{theorem}
\label{thm:fem-smooth}

Given an admissible bilinear form and an admissible inner product.
Let $f\in \dot H^{\sigma}$ with $0\le \sigma\le 1$, let $0\le \delta\le 1$, and assume $2\beta+\sigma>\delta$. Then, for all sufficiently small $h$,
\begin{equation}\label{eq:fem-smooth}
\norm{u_{\beta}-\bar u_{\beta,h}}_{\delta}
\lesssim
h^{\min\{2\beta+\sigma-\delta,2-\delta\}}
\left(\log \frac1h\right)^{[\{2\beta+\sigma-\delta=2\}\lor \{2\beta-\delta=2, \delta>0\}]}
\norm{f}_{\sigma}.
\end{equation}
\end{theorem}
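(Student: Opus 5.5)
The plan is to split the error through the three intermediate approximations in \eqref{expressionsss}:
\begin{equation*}
u_\beta-\bar u_{\beta,h}=(u_\beta-\widehat u_{\beta,h})+(\widehat u_{\beta,h}-\widetilde u_{\beta,h})+(\widetilde u_{\beta,h}-\bar u_{\beta,h}).
\end{equation*}
For the first term we invoke the classical smooth-data estimate for the exact-integration fractional Galerkin scheme, as in the analysis of \cite{bolin2020rational,bonito2015numerical}. For $f\in\dot H^\sigma$ it gives $\norm{u_\beta-\widehat u_{\beta,h}}_\delta\lesssim h^{\min\{2\beta+\sigma-\delta,2-\delta\}}$, up to the logarithm, times $\norm{f}_\sigma$. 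If a self-contained argument is required, it follows from the Dunford--Taylor representation $L^{-\beta}=\frac{\sin\pi\beta}{\pi}\int_0^\infty \mu^{-\beta}(\mu+L)^{-1}\,d\mu$ for $0<\beta<1$, with integer powers handled by factoring. One splits the $\mu$-integral at $\mu\simeq h^{-2}$. The resolvent error $\norm{(\mu+L)^{-1}-(\mu+\widehat L_h)^{-1}\Pi_h}_{\dot H^{\sigma}\to\dot H^{\delta}}$ is bounded by $\min\{h^{2-\delta}\mu^{-(1-\sigma)/2}\cdots,\ \mu^{-1+(\delta-\sigma)/2}\}$-type estimates obtained from Lemma~\ref{lem:beta1-bilinear}-style duality arguments. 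The borderline exponent $2\beta+\sigma-\delta=2$ produces exactly the logarithm.

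For the remaining two terms, which are perturbations at the discrete level, we use the same contour/Balakrishnan representation for discrete operators. On $V_h$ with quasi-uniformity, $\dot H^\delta$ norms of discrete functions are equivalent to $\norm{\widehat L_h^{\delta/2}\cdot}$. The key identity is the second resolvent formula
\begin{equation*}
(\mu+\widehat L_h)^{-1}\Pi_h-(\mu+\widetilde L_h)^{-1}\Pi_h=(\mu+\widetilde L_h)^{-1}(\widetilde L_h-\widehat L_h)(\mu+\widehat L_h)^{-1}\Pi_h.
\end{equation*}
Since $(\widetilde L_h-\widehat L_h)$ acts through $a_h-a$, Definition~\ref{def:admissible-bilinear} gives $|\ip{(\widetilde L_h-\widehat L_h)\chi}{\psi}|\lesssim h^2|\chi|_1|\psi|_1$. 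Combining this with $|\chi|_1\simeq\norm{\widehat L_h^{1/2}\chi}$ and the resolvent bounds $\norm{\widehat L_h^{s}(\mu+\widehat L_h)^{-1}}\le \mu^{s-1}$ (and likewise for $\widetilde L_h$, which is uniformly equivalent by Remark~\ref{rem:ah-coercive}), we bound the integrand in $\dot H^\delta$. The bound is $h^2\mu^{-\beta}\norm{\widehat L_h^{(1+\delta)/2}(\mu+\widetilde L_h)^{-1}}\,\norm{\widehat L_h^{1/2-\sigma/2}(\mu+\widehat L_h)^{-1}}\,\norm{f}_\sigma$, where smoothness of $f$ enters through $\norm{\widehat L_h^{\sigma/2}\Pi_hf}\lesssim\norm{f}_\sigma$ (stability of $\Pi_h$ in $\dot H^\sigma$, $\sigma\le1$, under quasi-uniformity). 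Integrating over $\mu\in(0,Ch^{-2})$ and using the spectral cutoff $\sup\sigma(L_h)\lesssim h^{-2}$ above it yields $h^{\min\{2\beta+\sigma-\delta,2-\delta\}}$, with the logarithm again at the critical exponent.

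The third term is handled analogously. Here the change is in the Riesz map: $L_h=\mathcal M^{-1}\widetilde L_h$ and $\Lambda_h=\mathcal M^{-1}\Pi_h$, where $\mathcal M:V_h\to V_h$ is defined by $\langle\chi,\psi\rangle_h=\ip{\mathcal M\chi}{\psi}$. Then $(\mu+L_h)^{-1}\Lambda_h=(\mu\mathcal M+\widetilde L_h)^{-1}\Pi_h$, so the resolvent difference equals $(\mu\mathcal M+\widetilde L_h)^{-1}\mu(I-\mathcal M)(\mu+\widetilde L_h)^{-1}\Pi_h$. By Definition~\ref{def:admissible-innerproduct}(i), $\mu(I-\mathcal M)$ is bounded in the same $h^2|\cdot|_1|\cdot|_1$ sense. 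By Definition~\ref{def:admissible-innerproduct}(ii), $\mu\mathcal M+\widetilde L_h$ is uniformly comparable to $\mu+\widetilde L_h$, and the extra factor $\mu$ is absorbed by $\norm{\widetilde L_h^{1/2}(\mu+\widetilde L_h)^{-1}}\le\mu^{-1/2}$. The same $\mu$-integration gives the same rate.

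The main obstacle is bookkeeping of the exponents in the $\mu$-integral so that the sharp rate $\min\{2\beta+\sigma-\delta,2-\delta\}$ emerges, rather than a loss. This requires using the $h^2|\cdot|_1^2$ bound only for $\mu\lesssim h^{-2}$ and the trivial stability bound above. The logarithmic cases must also be tracked exactly: the factor appears when $2\beta+\sigma-\delta=2$ in the perturbation terms, and when $2\beta-\delta=2$, $\delta>0$ through the inherited rough-data contribution of Theorem~\ref{thm:fem-rough}. For $\beta\ge1$ we would first write $L_h^{-\beta}=L_h^{-\lfloor\beta\rfloor}L_h^{-\{\beta\}}$ and telescope.
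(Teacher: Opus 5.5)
Your proposal is correct and uses the same splitting $E_1+E_2+E_3$ as the paper, but it brings in the smoothness of $f$ by a different mechanism.

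The paper writes $f=L^{-\sigma/2}g$ and splits the perturbation term into two pieces:
\begin{itemize}
\item $(\widetilde L_h^{-\beta}\Pi_h-L_h^{-\beta}\Lambda_h)(L^{-\sigma/2}-\widetilde L_h^{-\sigma/2}\Pi_h)g$, bounded by the rough-data estimate of Theorem~\ref{thm:fem-rough} times an $h^{\sigma}$ approximation bound for $L^{-\sigma/2}$;
\item the same operator applied to the discrete datum $\widetilde L_h^{-\sigma/2}\Pi_h g$, treated on a keyhole contour truncated to $[r,R]$ with $R\simeq h^{-2}$.
\end{itemize}
You instead transfer smoothness in one step via $\norm{\widehat L_h^{\sigma/2}\Pi_h f}\lesssim\norm{f}_\sigma$. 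This follows from $H^1$-stability of $\Pi_h$ under quasi-uniformity plus interpolation. You then work with the Balakrishnan integral and explicit resolvent identities.

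Your Riesz-map formulation of $E_3$ is the same mechanism as the paper's identity $\widetilde L_h^{-1}\Pi_h=L_h^{-1}\Lambda_h=G_h$, since $L_h^{-1}\Lambda_h=\widetilde L_h^{-1}\mathcal M\mathcal M^{-1}\Pi_h$. Both routes produce the integrand $h^2\mu^{-\beta+(\delta-\sigma)/2}$.

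What your route buys:
\begin{itemize}
\item It is self-contained. It needs neither Theorem~\ref{thm:fem-rough} nor an $h^\sigma$ estimate for $\widetilde L_h^{-\sigma/2}\Pi_h$, which for the perturbed operator $\widetilde L_h$ is itself a perturbation result.
\item It treats $E_2$ explicitly. Your integrand $h^2\mu^{-\beta}(1+\mu)^{-1+(\delta-\sigma)/2}$ is integrable on $(0,\infty)$ when $2\beta+\sigma>\delta$, so in fact $E_2=O(h^2)$. The paper only refers back to the rough case for $E_2$.
\end{itemize}
The paper's route buys economy by reusing already-proved bounds. Note that in your argument the logarithm for $\{2\beta-\delta=2,\ \delta>0\}$ can only come from the classical term $E_1$. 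It is not an inherited rough-data contribution, since you never invoke Theorem~\ref{thm:fem-rough}.

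Two bookkeeping points need care.

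First, the bounds $\norm{\widehat L_h^{s}(\mu+\widehat L_h)^{-1}}\le\mu^{s-1}$ make the $\mu$-integral diverge at $\mu=0$. You must use the uniform lower spectral bound $c_0$ to replace them by $(c_0+\mu)^{s-1}$; this is the role of the paper's cutoff $r<c_0/2$.

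Second, for $\beta\ge1$ the telescoping must be arranged so that smooth data sit to the right of each difference. Consider a naive split $(\widetilde L_h^{-n}-L_h^{-n})L_h^{-\{\beta\}}\Lambda_h f$ with $n=\lfloor\beta\rfloor$, estimated only through $\lvert((\mathcal M-I)\chi,\psi)\rvert\lesssim h^2\seminorm{\chi}_1\seminorm{\psi}_1$. In $L^2$ this only yields $h^{1+2\{\beta\}+\sigma}$ instead of $h^2$ when $2\{\beta\}+\sigma<1$. Instead, write
$\widetilde L_h^{-\beta}\Pi_h-L_h^{-\beta}\Lambda_h=(\widetilde L_h^{-(\beta-1)}-L_h^{-(\beta-1)})G_h$
and use the discrete $H^2$-regularity of $G_hf$. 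Alternatively, the paper's truncated contour handles all $\beta>0$ at once.
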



Finally, we return to the box-method discretization. Relative to the FEM scheme, the only additional change is the replacement of $\Lambda_{h}$ by $P_{h}$, that is, the use of control-volume averaging in the load term.

\begin{theorem}
\label{thm:box-smooth}
Assume the hypotheses of Theorem~\ref{thm:fem-smooth}. Let $u_{\beta,h}=L_{h}^{-\beta}P_{h}f$ be the box-method discretization from Definition~\ref{def:box-general}. Then, for all sufficiently small $h$,
\begin{equation}\label{eq:box-smooth}
\norm{u_{\beta}-u_{\beta,h}}_{\delta}
\lesssim
h^{\min\{2\beta+\sigma-\delta, 2-\delta,1+\sigma\}}
\left(\log \frac1h\right)^{[\{2\beta+\sigma-\delta=2\}\lor \{2\beta-\delta=2, \delta>0\}]}
\norm{f}_{\sigma}.
\end{equation}
\end{theorem}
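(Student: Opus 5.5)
The plan is to split the error with the triangle inequality,
\[
\norm{u_\beta-u_{\beta,h}}_\delta\le \norm{u_\beta-\bar u_{\beta,h}}_\delta+\norm{L_h^{-\beta}(\Lambda_h-P_h)f}_\delta .
\]
Theorem~\ref{thm:fem-smooth} already controls the first term with the rate $h^{\min\{2\beta+\sigma-\delta,2-\delta\}}$ and the stated logarithm. So everything reduces to showing that the load-perturbation term is $\lesssim h^{1+\sigma}\norm{f}_\sigma$, possibly together with terms already of the FEM order.

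\textbf{Step 1: reduce to an $H^1$-type norm.} For a discrete function, the norms $\norm{\cdot}_\delta$ with $0\le\delta\le1$ are bounded by $\norm{\cdot}_{H^1}$, since $\dot H^1=\cV$ with equivalent norms. By Remark~\ref{rem:ah-coercive} and admissibility of $\langle\cdot,\cdot\rangle_h$, we have $\norm{\chi}_{H^1}^2\simeq a_h(\chi,\chi)=\norm{L_h^{1/2}\chi}_h^2$. It therefore suffices to bound $\norm{L_h^{1/2-\beta}e_h}_h$ with $e_h=(\Lambda_h-P_h)f$. The functional $\langle e_h,\chi\rangle_h=(f,\chi-Q\chi)$ is the key object.

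\textbf{Step 2: the case $2\beta\ge1$.} Here $L_h^{1/2-\beta}$ is bounded uniformly on $(V_h,\norm{\cdot}_h)$ because $\lambda_{\min}(L_h)\gtrsim1$. So it is enough to bound $\norm{e_h}_h$ in a negative discrete norm. More precisely, I will write
\[
\norm{L_h^{1/2-\beta}e_h}_h\le \norm{L_h^{-1/2}e_h}_h\sup\sigma(L_h^{1-\beta})\quad\text{if }\beta\ge1,
\]
and use a direct estimate otherwise. For $\norm{L_h^{-1/2}e_h}_h=\sup_\chi (f,\chi-Q\chi)/\norm{\chi}_{H^1}$, I will use the local cancellation $\int_{K}(\chi-Q\chi)\,dx=0$ on each element, which holds because the $d+1$ subregions $\cA_z(K)$ have equal volume. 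This cancellation allows subtraction of the elementwise mean of $f$, so
\[
|(f,\chi-Q\chi)|\lesssim h\seminorm{f}_{1}\cdot h\seminorm{\chi}_1=h^2\norm{f}_1\norm{\chi}_{H^1}
\]
for $f\in\dot H^1$. For $f\in L^2$, the cruder bound is $\norm{f}\,h\seminorm{\chi}_1$. Interpolating gives $|(f,\chi-Q\chi)|\lesssim h^{1+\sigma}\norm{f}_\sigma\seminorm{\chi}_1$, which is the source of the $h^{1+\sigma}$ term. When $\beta\ge1$ the extra factor $L_h^{1-\beta}$ is bounded, so the term is $\lesssim h^{1+\sigma}\norm{f}_\sigma$.

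\textbf{Step 3: the case $1/2\le\beta<1$, and the main obstacle.} For $\beta<1$, and especially $\beta<1/2$, the factor $L_h^{1/2-\beta}$ has norm up to $h^{-(1-2\beta)}$, and this would spoil the rate. The fix is to use the $\delta$-dependence and spectral splitting instead of reducing to $H^1$. I measure in $\norm{\cdot}_\delta$ via the discrete equivalent $\norm{L_h^{\delta/2}\cdot}_h$, which requires norm equivalence of $\dot H^\delta$ and discrete fractional norms on $V_h$; this is standard under quasi-uniformity, by interpolation between $\delta=0$ and $\delta=1$. I then bound
\[
\norm{L_h^{\delta/2-\beta}e_h}_h\le \sup_{\lambda\in\sigma(L_h)}\lambda^{(\delta+1)/2-\beta}\,\norm{L_h^{-1/2}e_h}_h\lesssim h^{-\max\{0,\delta+1-2\beta\}}h^{1+\sigma}\norm{f}_\sigma,
\]
using $\lambda_{\max}\lesssim h^{-2}$. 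This gives the exponent $\min\{1+\sigma,2\beta+\sigma-\delta\}$, which matches the claim.

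I expect the delicate points to be two. The first is the discrete–continuous norm equivalence for fractional $\delta$ on $V_h$ under an arbitrary admissible inner product. The second is justifying the $L^2$ endpoint $|(f,\chi-Q\chi)|\lesssim h\norm{f}\seminorm{\chi}_1$, which follows from $\norm{\chi-Q\chi}\lesssim h\seminorm{\chi}_1$ elementwise. Both are standard, so the logarithmic factors come only from Theorem~\ref{thm:fem-smooth}.
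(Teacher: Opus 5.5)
Your proposal is correct, and for $\sigma>0$ it takes a different and more economical route than the paper. Both proofs use Theorem~\ref{thm:fem-smooth} to reduce to the load-perturbation term $L_h^{-\beta}(P_h-\Lambda_h)f$, measured through $\|L_h^{\delta/2}\cdot\|_h$.

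The paper then argues by duality in two cases:
\begin{itemize}
\item For $\sigma=0$ it writes $(d_h,g)=(f,(Q-I)L_h^{-\beta+\delta/2}\Lambda_h g)$. It then applies $\|(I-Q)\chi\|\lesssim h\|\chi\|_{H^1(\cS)}$ and the inverse estimate to $L_h^{-\beta+\delta/2}\Lambda_h g$. This is your argument in transposed form.
\item For $0<\sigma\le 1$ it introduces the continuous adjoint problem $Lw=g$ and uses $H^2$ elliptic regularity on the convex domain. It also needs the Ritz projection $w_h$ with the bound $\|w_h\|_{2,h}\lesssim\|g\|$. It then splits $(g,d_h)$ into a bilinear-form consistency term and a load-quadrature term.
\end{itemize}

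You instead bound the discrete dual norm directly: $\|L_h^{-1/2}(\Lambda_h-P_h)f\|_h=\sup_{\chi\in V_h}(f,\chi-Q\chi)/a_h(\chi,\chi)^{1/2}\lesssim h^{1+\sigma}\|f\|_\sigma$. This uses Lemma~\ref{lem:load-quadrature}, which you re-derive correctly from the elementwise cancellation $\int_K(\chi-Q\chi)\,dx=0$. You then pay the spectral factor $\sup_{\lambda\in\sigma(L_h)}\lambda^{(1+\delta)/2-\beta}\lesssim h^{-(1+\delta-2\beta)^+}$ once.

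This treats all $\sigma\in[0,1]$ uniformly. In this step it needs neither elliptic regularity, nor the Ritz projection, nor the consistency of $a_h$. It yields exactly the same exponent $\min\{1+\sigma,2\beta+\sigma-\delta\}$, with no logarithm. In fact, the paper's $\sigma>0$ case could have been closed the same way, by applying Lemma~\ref{lem:load-quadrature} inside its $\sigma=0$ duality identity. The heavier duality machinery buys nothing extra for this statement.

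Two presentational points:
\begin{itemize}
\item Your Steps 1--2 are superseded by the final estimate of Step 3, which holds for every $\beta>0$ and $0\le\delta\le 1$. Reducing to the $H^1$ norm as in Step 1 discards the $\delta$-dependence and would lose the rate $2\beta+\sigma-\delta$ when $\delta<1$. Present Step 3 alone.
\item Only the one-sided bound $\|\chi\|_\delta\lesssim\|L_h^{\delta/2}\chi\|_h$ on $V_h$ is needed. It follows by interpolating the endpoint bounds $\|\chi\|\lesssim\|\chi\|_h$ and $\|\chi\|_1^2=a(\chi,\chi)\lesssim a_h(\chi,\chi)$. So it is less delicate than you suggest, and quasi-uniformity enters only through $\sup\sigma(L_h)\lesssim h^{-2}$.
\end{itemize}
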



Theorem~\ref{thm:box-smooth} shows the expected $h^{1+\sigma}$ saturation caused by replacing the exact load functional $\chi\mapsto \ip{f}{\chi}$ with the control-volume functional $\chi\mapsto \ip{f}{Q\chi}$. For the purposes of this paper, the main point is that this is precisely the natural accuracy barrier of the box-method load approximation, and that the choice of admissible inner product does not change the resulting convergence order.

To pass from the discrete fractional operators analyzed above to fully implementable methods, we allow for an additional exponentially convergent rational approximation of the map $\lambda\mapsto \lambda^{-\beta}$. This covers, e.g., the rational approximations of \cite{bolin2020rational,bolin2024covariance} and the quadrature-based constructions of \cite{bonito2015numerical}.

\begin{assumption}\label{ass:rational-approx}
For each sufficiently small $h$ and each $m\in\mathbb N$, let $Q_m^\beta$ be a rational function defined on a neighborhood of $\sigma(\widetilde L_h)\cup\sigma(L_h)$, and assume that, for every $0\le \delta\le 1$,
\begin{equation}\label{eq:rational-approx}
\sup_{\lambda\in\sigma(\widetilde L_h)\cup\sigma(L_h)}
\lambda^{\delta/2}\bigl|\lambda^{-\beta}-Q_m^\beta(\lambda)\bigr|
\le
\rho_\delta(h)e^{-Cg(m)},
\end{equation}
where $C>0$ is independent of $h$ and $m$, the function $g:\mathbb N\to(0,\infty)$ satisfies $g(m)\to\infty$ as $m\to\infty$, and $\rho_\delta(h)$ is a nonnegative rational function of $h$.
\end{assumption}

We define the fully implementable approximations
\begin{equation*}
\bar u_{\beta,h}^{(m)}:=Q_m^\beta(L_h)\Lambda_h f
\qquad\text{and}\qquad
u_{\beta,h}^{(m)}:=Q_m^\beta(L_h)P_h f.
\end{equation*}

\begin{corollary}\label{cor:fully-implementable}
Suppose that $Q_m^\beta$ satisfies  Assumption~\ref{ass:rational-approx} and let $0\le \delta\le 1$. Then,
\begin{align}
\|\bar u_{\beta,h}-\bar u_{\beta,h}^{(m)}\|_{\delta,h}
\lesssim
\rho_\delta(h)e^{-Cg(m)}\|f\|,\label{eq:rational-fem-discrete} \\
\|u_{\beta,h}-u_{\beta,h}^{(m)}\|_{\delta,h}
\lesssim
\rho_\delta(h)e^{-Cg(m)}\|f\|.\label{eq:rational-box-discrete}
\end{align}
Combining these estimates with Theorem~\ref{thm:fem-rough} and Theorem~\ref{thm:box-smooth} yields
\begin{align}
\|u_\beta-\bar u_{\beta,h}^{(m)}\|_\delta
&\lesssim
h^{\min\{2\beta-\delta,2-\delta\}}
\left(\log\frac1h\right)^{[2\beta-\delta=2,\,\delta> 0]}
\|f\|
\notag\\
&\quad+
\rho_\delta(h)e^{-Cg(m)}\|f\|.\label{eq:rational-fem-total}\\
\|u_\beta-u_{\beta,h}^{(m)}\|_\delta
&\lesssim
h^{\min\{2\beta+\sigma-\delta,2-\delta,1+\sigma\}}
\left(\log\frac1h\right)^{[\{2\beta+\sigma-\delta=2\}\lor \{2\beta-\delta=2, \delta>0\}]}
\|f\|_\sigma
\notag\\
&\quad+
\rho_\delta(h)e^{-Cg(m)}\|f\|_\sigma.\label{eq:rational-box-total}
\end{align}
In particular, if $m=m(h)$ is chosen so that the exponential term is of the same order as the corresponding spatial discretization error, then the fully implementable approximation has the same asymptotic convergence rate as the exact discrete method.
\end{corollary}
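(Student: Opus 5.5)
We prove Corollary~\ref{cor:fully-implementable} in two stages: first the purely discrete bounds \eqref{eq:rational-fem-discrete}--\eqref{eq:rational-box-discrete} via the spectral theorem for $L_h$ on $(V_h,\langle\cdot,\cdot\rangle_h)$, then the total bounds by the triangle inequality. We read $\norm{\chi}_{\delta,h}$ as the discrete energy-scale norm $\norm{L_h^{\delta/2}\chi}_h$.

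\textbf{Discrete bounds.} Since $L_h$ is positive selfadjoint on $(V_h,\langle\cdot,\cdot\rangle_h)$, it has an $\langle\cdot,\cdot\rangle_h$-orthonormal eigenbasis $\{(\lambda_{j,h},e_{j,h})\}$. For any $g_h\in V_h$, Parseval gives
\[
\norm{(L_h^{-\beta}-Q_m^\beta(L_h))g_h}_{\delta,h}^2=\sum_j \lambda_{j,h}^{\delta}\bigl|\lambda_{j,h}^{-\beta}-Q_m^\beta(\lambda_{j,h})\bigr|^2\langle g_h,e_{j,h}\rangle_h^2 .
\]
Assumption~\ref{ass:rational-approx} bounds this by $\rho_\delta(h)^2e^{-2Cg(m)}\norm{g_h}_h^2$. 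We apply this with $g_h=\Lambda_hf$ and $g_h=P_hf$. Testing the definition of $\Lambda_h$ with $\chi=\Lambda_hf$ and using Definition~\ref{def:admissible-innerproduct}(ii) gives $\norm{\Lambda_hf}_h\lesssim\norm f$. Proposition~\ref{prop:pseudo-proj}(ii), together with $\norm{Q\chi}\simeq\norm\chi\simeq\norm\chi_h$, gives $\norm{P_hf}_h\lesssim\norm f$. This proves \eqref{eq:rational-fem-discrete}--\eqref{eq:rational-box-discrete}.

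\textbf{Total bounds.} We use
\[
u_\beta-u^{(m)}_{\beta,h}=(u_\beta-u_{\beta,h})+(u_{\beta,h}-u^{(m)}_{\beta,h}),
\]
and likewise for the FEM approximation. The first term is controlled by Theorem~\ref{thm:fem-rough} or Theorem~\ref{thm:box-smooth}. For the second term we must compare $\norm{\cdot}_\delta$ with $\norm{\cdot}_{\delta,h}$ on $V_h$, and we need only the one direction $\norm{\chi}_\delta\lesssim\norm{\chi}_{\delta,h}$.

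\begin{itemize}
\item At $\delta=0$ this is Definition~\ref{def:admissible-innerproduct}(ii).
\item At $\delta=1$, we have $\norm\chi_1^2=a(\chi,\chi)\lesssim a_h(\chi,\chi)=\norm{\chi}_{1,h}^2$ by Definition~\ref{def:admissible-bilinear} and Remark~\ref{rem:ah-coercive}.
\item For $0<\delta<1$, we interpolate. The scale $\norm{\cdot}_{\delta,h}$ is the exact spectral interpolation scale of the pair $(\norm\cdot_h,\norm\cdot_{1,h})$ on $V_h$. Real or complex interpolation of the identity map $V_h\to\dot H^{\delta}$, whose norms at the endpoints are uniformly bounded, gives the bound for intermediate $\delta$. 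One checks that the $K$-functional of the continuous scale restricted to $V_h$ dominates in the right direction. This holds because the $\dot H^\delta$ norm on a subspace is at most the interpolation norm of the subspace's restricted endpoint norms.
\end{itemize}

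Finally, $\norm f\lesssim\norm f_\sigma$ because $\lambda_j\ge\lambda_1>0$. This gives \eqref{eq:rational-fem-total}--\eqref{eq:rational-box-total}. The concluding remark follows by choosing $m$ so that $e^{-Cg(m)}\rho_\delta(h)\lesssim h^{r}$ for the relevant spatial rate $r$, which is possible since $g(m)\to\infty$.

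\textbf{Main obstacle.} The delicate step is the norm comparison for fractional $\delta$. If the interpolation argument is awkward, the fallback is a Heinz-type inequality: the operators $\widetilde L_h$ and $L_h$ are quadratic forms comparable at the endpoints, and operator monotonicity of $t\mapsto t^{\delta}$ for $\delta\in[0,1]$ transfers the comparison. One then passes from the discrete $\widehat L_h$-scale to $\dot H^\delta$ on $V_h$ using the standard FEM norm equivalence under quasi-uniformity.
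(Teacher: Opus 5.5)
Your proposal is correct and follows the same route as the paper. The spectral theorem for $L_h$ on $(V_h,\langle\cdot,\cdot\rangle_h)$, together with the uniform boundedness of $\Lambda_h$ and $P_h$, gives the discrete bounds, and the triangle inequality with Theorems~\ref{thm:fem-rough} and~\ref{thm:box-smooth} gives the total bounds. You go beyond the paper's proof by making explicit two steps it uses tacitly: the one-sided comparison $\|\chi\|_\delta\lesssim\|\chi\|_{\delta,h}$ on $V_h$, for which your interpolation of the inclusion between the uniformly bounded endpoint cases $\delta=0$ and $\delta=1$ is valid, and the bound $\|f\|\lesssim\|f\|_\sigma$.
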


\begin{proof}

Observe that $\|\bar u_{\beta,h}-\bar u_{\beta,h}^{(m)}\|_{\delta,h}= \|L_h^{\delta/2}(L_h^{-\beta}-Q_m^\beta(L_h))\Lambda_h f\|_h
$. Hence, by the spectral theorem,
$$
\|\bar u_{\beta,h}-\bar u_{\beta,h}^{(m)}\|_{\delta,h}
\le
\sup_{\lambda\in\sigma(L_h)}
\lambda^{\delta/2}\bigl|\lambda^{-\beta}-Q_m^\beta(\lambda)\bigr|
\,\|\Lambda_h f\|_h.
$$
Using Assumption~\ref{ass:rational-approx} and the uniform boundedness of $\Lambda_h$, we obtain \eqref{eq:rational-fem-discrete}. The estimate \eqref{eq:rational-box-discrete} follows in the same way, using the uniform boundedness of $P_h$. The bounds \eqref{eq:rational-fem-total} and \eqref{eq:rational-box-total} then follow immediately from the triangle inequality together with Theorems~\ref{thm:fem-rough} and \ref{thm:box-smooth}.
\end{proof}

\section{Proofs of intermediate results and error estimates}\label{sec:proofs}

This section gathers the proofs of the results from Sections~\ref{sec:setting}--\ref{sec:error} whose arguments are more technical. The notation is that of the previous sections. 
We also use the fractional Hilbert scale $\dot H^{s}(\mathcal{S})$ associated with the positive selfadjoint operator $L=-\divop(A\nabla)+\kappa^{2}$, with $\dot H^{s}(\mathcal{S})=D(L^{s/2})$ for $s\ge0$ and the completion/duality definition for $s<0$. The contour formulas used later are standard consequences of the holomorphic functional calculus for positive sectorial operators; see, for example, \cite{Haase2006,bonito2015numerical}. For Hilbert spaces $X$ and $Y$, we write $L(X,Y)$ for the space of bounded linear operators from $X$ to $Y$, and abbreviate $L(X,X)$ by $L(X)$. The corresponding operator norm is denoted by $\|\cdot\|_{L(X,Y)}$, and by $\|\cdot\|_{L(X)}$ when $X=Y$.

\subsection{Proofs from Section~\ref{sec:setting}}
For a simplex $K\in\cT_h$ and a vertex $z\in\cZ(K)$, let
$
\Gamma_{z,K}:=\partial b_z\cap K.
$
This is the part of the control-volume boundary contained in $K$. When $d=2$, $\Gamma_{z,K}$ is the broken line joining the two edge midpoints adjacent to $z$ to the barycenter of $K$. When $d=3$, $\Gamma_{z,K}$ is the union of three quadrilateral faces.

\begin{lemma}\label{supp:localflux}
Let $K\in\cT_h$, let $z\in\cZ(K)$, and let $\phi_z$ denote the local nodal basis function associated with $z$. For every constant vector $p\in\R^{d}$,
\begin{equation}\label{supp:localflux_identity}
-\int_{\Gamma_{z,K}} p\cdot\eta_{z,K}\,ds = \int_{K} p\cdot\nabla\phi_z\,dx = |K|\,p\cdot\nabla\phi_z.
\end{equation}
Here $\eta_{z,K}$ is the outward unit normal to $b_z$ restricted to $\Gamma_{z,K}$.
\end{lemma}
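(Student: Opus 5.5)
The second equality in \eqref{supp:localflux_identity} is immediate, since $\phi_z$ is affine on $K$ and hence $\nabla\phi_z$ is constant there. The substance is the first equality, which I would prove with the divergence theorem on the subregion $\cA_z(K)$ and then a symmetry argument on the faces of $K$.

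The boundary of $\cA_z(K)$ splits into two parts. One part is $\Gamma_{z,K}$, the portion interior to $K$, whose outward normal (as seen from $\cA_z(K)\subset b_z$) is $\eta_{z,K}$. The other part is $\partial\cA_z(K)\cap\partial K$, which lies in the facets $F$ of $K$ that contain $z$. Since $p$ is constant, the divergence theorem on the polytope $\cA_z(K)$ gives
\[
0=\int_{\cA_z(K)}\divop p\,dx=\int_{\Gamma_{z,K}}p\cdot\eta_{z,K}\,ds+\sum_{F\ni z}|F\cap\cA_z(K)|\,p\cdot n_F ,
\]
where $n_F$ is the outward unit normal of $K$ on $F$. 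The key geometric fact is that the barycentric construction restricted to a facet $F$ is exactly the $(d-1)$-dimensional barycentric subdivision of $F$. Indeed, the faces of $F$ containing $z$ are faces of $K$ containing $z$, and their barycenters are the points used to build $\cA_z(K)\cap F$. By the equal-measure property (applied in dimension $d-1$), this gives $|F\cap\cA_z(K)|=|F|/d$. I expect checking that $\cA_z(K)\cap F$ really equals the sub-cell of $F$ to be the main (though mild) obstacle. I would argue it via barycentric coordinates: $\cA_z(K)=\{x\in K:\lambda_z(x)\ge\lambda_w(x)\ \forall w\}$, i.e.\ the region where $z$'s barycentric coordinate is maximal. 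This description restricts to facets in the obvious way and also yields the equal-measure partition by symmetry.

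Therefore
\[
-\int_{\Gamma_{z,K}}p\cdot\eta_{z,K}\,ds=\frac1d\sum_{F\ni z}|F|\,p\cdot n_F .
\]
Next I would use the closed-polytope identity $\sum_{F}|F|n_F=0$, summed over all facets of $K$. The facets containing $z$ are all facets except the one opposite $z$, call it $F_z$. Hence $\sum_{F\ni z}|F|n_F=-|F_z|n_{F_z}$.

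Finally, $\nabla\phi_z=-n_{F_z}/h_z$, where $h_z$ is the distance from $z$ to $F_z$, because $\phi_z$ vanishes on $F_z$ and equals $1$ at $z$. Combined with $|K|=|F_z|h_z/d$, this gives
\[
\frac1d\bigl(-|F_z|\,p\cdot n_{F_z}\bigr)=\frac{|F_z|h_z}{d}\,p\cdot\nabla\phi_z=|K|\,p\cdot\nabla\phi_z ,
\]
which is \eqref{supp:localflux_identity}. In the case $d=1$ the facets are points with measure $1$, and the same computation reduces to a direct check.
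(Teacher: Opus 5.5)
Your proof is correct, but it takes a different route from the paper's.

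\textbf{The paper's route.} It uses affine covariance of both sides to reduce to the reference simplex. It then computes $\int_{\Gamma_{z_0,\widehat K}}\eta\,ds$ by hand, separately for $d=1,2,3$; for $d=3$ this means adding the three quadrilateral area vectors to get $(\tfrac16,\tfrac16,\tfrac16)$. The remaining vertices are handled by symmetry.

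\textbf{Your route.} You apply the divergence theorem to the constant field $p$ on $\cA_z(K)$, which trades the interior interface $\Gamma_{z,K}$ for the facet pieces $\cA_z(K)\cap F$ with $F\ni z$. You identify these pieces as the $(d-1)$-dimensional barycentric subcells of $F$, so each has measure $|F|/d$. Your description $\cA_z(K)=\{\lambda_z\ge\lambda_w\ \forall w\}$ is correct: at a vertex of that polytope, each $\lambda_w$ with $w\neq z$ equals either $0$ or $\lambda_z$, so the vertices are exactly the barycenters of faces containing $z$. You then conclude with $\sum_F|F|n_F=0$, $\nabla\phi_z=-n_{F_z}/h_z$ and $|K|=|F_z|h_z/d$. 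As a check, for the reference tetrahedron your facet sum gives $\tfrac1d\sum_{F\ni z}|F|n_F=-(\tfrac16,\tfrac16,\tfrac16)$, matching the paper's value.

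\textbf{What each buys.} Your argument works in every dimension, needs no coordinate bookkeeping, and avoids the transformation rule for integrated normals under affine maps that the paper's reduction relies on. It also isolates the real mechanism. Since the normals $\{n_F\}_{F\ni z}$ are linearly independent, the identity holds for all $p$ exactly when each facet piece has measure $|F|/d$. The paper's computation is more concrete and easy to verify line by line, but it is restricted to $d\le 3$.
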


\begin{proof}
Both sides of \eqref{supp:localflux_identity} are covariant under affine changes of variables. Indeed, if $F(\hat x)=B\hat x+b$ maps the reference simplex $\widehat K$ to $K$, then
$\nabla\phi_z = B^{-T}\nabla\widehat\phi_z$ and 
$|K|=|\det B|\,|\widehat K|$,
and the surface integral transforms by the standard affine change-of-variables formula. Therefore it is enough to verify \eqref{supp:localflux_identity} on the reference simplex.

For $d=1$, $\widehat K=(0,1)$. For the left endpoint, $\Gamma_{0,\widehat K}=\{1/2\}$ and the outward normal equals $+1$, while $\widehat\phi_0(x)=1-x$. Hence
$$
-\int_{\Gamma_{0,\widehat K}} p\,\eta\,ds=-p
=
\int_{0}^{1} p\,\widehat\phi_0'(x)\,dx.
$$
The right endpoint is identical.

For $d=2$, take $\widehat K=\operatorname{co}\{(0,0),(1,0),(0,1)\}$. Here $\operatorname{conv}\{(0,0),(1,0),(0,1)\}$ denotes the convex hull of the three vertices, i.e., the reference triangle with corners $(0,0)$, $(1,0)$, and $(0,1)$. For the vertex $z_0=(0,0)$, the boundary $\Gamma_{z_0,\widehat K}$ consists of the two segments
$\left[(\nicefrac1{2},0),\nicefrac1{3},\nicefrac1{3})\right]$ and $
\left[(\nicefrac1{3},\nicefrac1{3}),(0,\nicefrac1{2})\right].
$
The corresponding outward normal vectors integrated along the segments add up to $(1/2,1/2)$. Since $\widehat\phi_{z_0}(x,y)=1-x-y$ and $|\widehat K|=1/2$, we obtain
$$
-\int_{\Gamma_{z_0,\widehat K}} p\cdot\eta\,ds
=
-p\cdot(1/2,1/2)
=
|\widehat K|\,p\cdot\nabla\widehat\phi_{z_0}.
$$
The remaining vertices follow by symmetry.

For $d=3$, take $\widehat K=\operatorname{conv}\{(0,0,0),(1,0,0),(0,1,0),(0,0,1)\}$. For the vertex $z_0=(0,0,0)$, the set $\Gamma_{z_0,\widehat K}$ is the union of three quadrilaterals. The area vectors of those quadrilaterals are
$\left(\nicefrac1{12},\nicefrac1{24},\nicefrac1{24}\right)$,
$\left(\nicefrac1{24},\nicefrac1{12},\nicefrac1{24}\right)$,
and 
$\left(\nicefrac1{24},\nicefrac1{24},\nicefrac1{12}\right)$,
so that
$$
\int_{\Gamma_{z_0,\widehat K}}\eta\,ds
=
\left(\frac16,\frac16,\frac16\right).
$$
Since $\widehat\phi_{z_0}(x,y,z)=1-x-y-z$ and $|\widehat K|=1/6$, we conclude that
$$
-\int_{\Gamma_{z_0,\widehat K}} p\cdot\eta\,ds
=
-p\cdot\left(\frac16,\frac16,\frac16\right)
=
|\widehat K|\,p\cdot\nabla\widehat\phi_{z_0}.
$$
The other vertices are again obtained by symmetry.
\end{proof}

The lemma implies the exactness of the box diffusion term for cellwise constant coefficients. Related nonfractional flux computations may also be found in \cite{Liang2001}.

\begin{proof}[Proof of Lemma~\ref{lem:box-piecewise-constant}]
Since $\nabla\chi$ is constant on $K$, Lemma \ref{supp:localflux} gives
$$
-\sum_{z\in\cZ(K)}\psi(z)\int_{\Gamma_{z,K}} B\nabla\chi\cdot\eta_{z,K}\,ds
=
|K|\sum_{z\in\cZ(K)} \psi(z)\,B\nabla\chi\cdot\nabla\phi_z.
$$
Because $\psi=\sum_{z\in\cZ(K)}\psi(z)\phi_z$ on $K$, the right-hand side equals
\begin{equation*}
|K|\,B\nabla\chi\cdot\nabla\psi
=
\int_{K} B\nabla\chi\cdot\nabla\psi\,dx.
\end{equation*}
Therefore, we obtain
\begin{equation}\label{supp:constantcoeff_identity}
-\sum_{z\in\cZ(K)}\psi(z)\int_{\Gamma_{z,K}} B\nabla\chi\cdot\eta_{z,K}\,ds = \int_{K} B\nabla\chi\cdot\nabla\psi\,dx.
\end{equation}
The result thus follows by summing \eqref{supp:constantcoeff_identity} over all elements and adding the reaction term.
\end{proof}

\begin{proof}[Proof of Lemma~\ref{lem:avg-coefficient}]
The definition of the bilinear form $a_h^{A_h}$ uses the piecewise constant function $A_h$ determined by the cell averages $A_K:=A_h|_{K}$ and therefore falls exactly under Lemma \ref{lem:box-piecewise-constant} on every element. Summing \eqref{supp:constantcoeff_identity} over the mesh yields
$$
-\sum_{z\in\cZ_h^{\mathrm{act}}}\psi(z)\int_{\partial b_z} A_{K}\nabla\chi\cdot\eta\,ds
=
\sum_{K\in\cT_h}\int_{K}A_{K}\nabla\chi\cdot\nabla\psi\,dx.
$$
Since $\nabla\chi|_{K}$ and $\nabla\psi|_{K}$ are constant on each element and $A_{K}$ is the mean value of $A$ on $K$,
$$
\int_{K}A_{K}\nabla\chi\cdot\nabla\psi\,dx
=
\int_{K}A\nabla\chi\cdot\nabla\psi\,dx.
$$
Adding the reaction term gives
$$
a_h^{A_h}(\chi,\psi)
=
\int_{\cS}A\nabla\chi\cdot\nabla\psi\,dx
+
(\kappa Q\chi,\kappa Q\psi),
$$
which is symmetric.
\end{proof}

\begin{proof}[Proof of Theorem~\ref{thm:box-coercive}]
By Lemma \ref{lem:avg-coefficient}, we have that
\begin{equation}\label{supp:sym_form_identity}
a_h^{A_h}(\chi,\psi)
=
\int_{\cS} A\nabla\chi\cdot\nabla\psi\,dx
+
(\kappa Q\chi,\kappa Q\psi).
\end{equation}
In particular,
\begin{equation}\label{supp:sym_coerc}
a_h^{A_h}(\chi,\chi)
\ge
\alpha \seminorm{\chi}_{1}^{2}
+
\kappa_{0}^{2}\norm{Q\chi}^{2},
\end{equation}
where $\alpha>0$ is the ellipticity constant of $A$ and $\kappa_{0}=\operatorname*{ess\,inf}_{x\in\cS}\kappa(x)>0$.

The piecewise-constant norm induced by $Q$ is uniformly equivalent to the $L^{2}$ norm on $V_h$:
\begin{equation}\label{supp:Qequiv}
c_{Q}\norm{\chi}^{2}\le \norm{Q\chi}^{2}\le C_{Q}\norm{\chi}^{2}
\qquad
\forall \chi\in V_h,
\end{equation}
with constants depending only on the shape-regularity of the mesh family; see, for example, \cite{chou2000error,Thomée2006}. Consequently,
\begin{equation}\label{supp:sym_coerc2}
a_h^{A_h}(\chi,\chi)\ge c\left(\seminorm{\chi}_{H^{1}(\cS)}^{2}+\norm{\chi}^{2}\right)
\end{equation}
for a constant $c>0$ independent of $h$.

We now compare $a_h^{A}$ and $a_h^{A_h}$. Fix $\chi,\psi\in V_h$. On each element $K$, set
$$
\bar\psi_{K}:=\frac{1}{d+1}\sum_{z\in\cZ(K)}\psi(z).
$$
As in the previous proof, let $A_K := A_h|_K$. Since the sum of the local fluxes over all control-volume interfaces in $K$ vanishes, we may subtract the constant $\bar\psi_{K}$ from the nodal values and write
\begin{align*}
a_h^{A}(\chi,\psi)-a_h^{A_h}(\chi,\psi)
=
-\sum_{K\in\cT_h}\sum_{z\in\cZ(K)}
\bigl(\psi(z)-\bar\psi_{K}\bigr)
\int_{\Gamma_{z,K}} (A-A_{K})\nabla\chi\cdot\eta_{z,K}\,ds.
\end{align*}
For affine $\psi$ on $K$, the gradient $\nabla\psi$ is constant, and
\begin{equation*}
|\psi(z)-\bar\psi_{K}|\le C h_{K} \norm{\nabla\psi}_{\mathbb{R}^{d}}
=
C h_{K}|K|^{-1/2}\norm{\nabla\psi}_{L^{2}(K)};
\end{equation*}
the same applies to $\chi$.
Moreover,
\begin{equation*}
\norm{A-A_{K}}_{L^{\infty}(K)}\le C h_{K}\norm{A}_{W^{1,\infty}(K)},
\qquad
\mathcal H^{d-1}(\Gamma_{z,K})\le C h_{K}^{d-1},
\end{equation*}
where $\mathcal H^{d-1}(\Gamma_{z,K})$ denotes the $(d-1)$-dimensional Hausdorff measure of $\Gamma_{z,K}$, that is, its length when $d=2$ and its surface area when $d=3$.
Therefore, using $|K|\simeq h_{K}^{d}$ from shape regularity,
\begin{align*}
\left|a_h^{A}(\chi,\psi)-a_h^{A_h}(\chi,\psi)\right|
&\le
C\sum_{K\in\cT_h} h_{K}^{d+1} \norm{\nabla\chi}_{\mathbb{R}^{d}}\norm{\nabla\psi}_{\mathbb{R}^{d}} \\
&\le
C h\sum_{K\in\cT_h} |K|\,\norm{\nabla\chi}_{\mathbb{R}^{d}}\norm{\nabla\psi}_{\mathbb{R}^{d}}\\
&=
C h\sum_{K\in\cT_h}\norm{\nabla\chi}_{L^{2}(K)}\norm{\nabla\psi}_{L^{2}(K)}\\
&\le
C h\,\seminorm{\chi}_{H^{1}(\cS)}\seminorm{\psi}_{H^{1}(\cS)}.
\end{align*}
Combining this perturbation bound with \eqref{supp:sym_coerc2} yields
\begin{equation*}
a_h^{A}(\chi,\chi)
\ge
a_h^{A_h}(\chi,\chi)-C h \seminorm{\chi}_{H^{1}(\cS)}^{2}
\ge
\frac{c}{2}\norm{\chi}_{H^{1}(\cS)}^{2}
\end{equation*}
for all sufficiently small $h$. This proves the theorem.
\end{proof}

\begin{proof}[Proof of Proposition~\ref{prop:pseudo-proj}]
Let $P_h$ and $\Lambda_h$ be defined by
$
\innerh{P_hf}{\chi}=(f,Q\chi)$ and 
$\innerh{\Lambda_hf}{\chi}=(f,\chi)$, respectively.
Now, for $f\in L^{2}(\cS)$ and $\chi\in V_h$,
$
\innerh{P_hf}{\chi}
=
(f,Q\chi)
=
(\Pi_{0,h}f,Q\chi)
=
\innerh{P_h\Pi_{0,h}f}{\chi}.
$
Hence $P_h=P_h\Pi_{0,h}$ which proves (i).
Further, if $\norm{Q\chi}\le C\norm{\chi}_{h}$, then
$$
|\innerh{P_hf}{\chi}|
=
|(f,Q\chi)|
\le
\norm{f}\,\norm{Q\chi}
\le
C\norm{f}\,\norm{\chi}_{h}.
$$
Choosing $\chi=P_hf$ yields $\norm{P_hf}_{h}\le C\norm{f}$, which proves (ii).
To prove (iii), we use the expansion
$$
Q\chi=\sum_{z\in\cZ_h^{\mathrm{act}}}\chi(z)\,\mathbbm 1_{b_z},
\qquad
\Pi_{0,h}f = \sum_{z\in\cZ_h^{\mathrm{act}}}\left(\Pi_{0,h}f|_{b_z}\right)\mathbbm 1_{b_z},
$$
to obtain
$$
\innerh{P_hf}{\chi}
=
(\Pi_{0,h}f,Q\chi)
=
\sum_{z\in\cZ_h^{\mathrm{act}}}\Pi_{0,h}f|_{b_z}\,\chi(z)\,|b_z|.
$$
Hence $P_hf=0$ if and only if $\Pi_{0,h}f|_{b_z}=0$ for every active vertex $z$. To show surjectivity, fix $\alpha\in V_h$ and set
$$
f_{\alpha}
:=
\sum_{z\in\cZ_h^{\mathrm{act}}}
\frac{\innerh{\alpha}{\phi_z}}{|b_z|}\mathbbm 1_{b_z}.
$$
Then for every $\chi\in V_h$,
\begin{align*}
(f_{\alpha},Q\chi)
&=
\sum_{z\in\cZ_h^{\mathrm{act}}}
\frac{\innerh{\alpha}{\phi_z}}{|b_z|}
(\mathbbm 1_{b_z},Q\chi)
=
\sum_{z\in\cZ_h^{\mathrm{act}}}
\innerh{\alpha}{\phi_z}\,\chi(z)
=
\innerh{\alpha}{\chi}.
\end{align*}
Thus $P_hf_{\alpha}=\alpha$.
Finally, to prove (iv), we have for $f,g\in L^{2}(\cS)$ that 
$
(f,P_hg)
=
\innerh{\Lambda_hf}{P_hg}
=
(Q\Lambda_hf,g).
$
Therefore we have $P_h^{*}=Q\Lambda_h$ as an operator on $L^{2}(\cS)$.
\end{proof}

\begin{proof}[Proof of Corollary~\ref{cor:projection-characterization}]
Assume first that $\innerh{\chi}{\psi}=(\chi,Q\psi)$ for all $\chi,\psi\in V_h$. Then
$$
\innerh{P_h\chi}{\psi}
=
(\chi,Q\psi)
=
\innerh{\chi}{\psi}
\qquad
\forall \chi,\psi\in V_h,
$$
so $P_h\chi=\chi$ for every $\chi\in V_h$. Since $\operatorname{Ran}(P_h)=V_h$ by Proposition~\ref{prop:pseudo-proj}, it follows that $P_h$ is a projection.

Conversely, suppose that $P_h$ is a projection. Since $\operatorname{Ran}(P_h)=V_h$, we have $P_h\chi=\chi$ for every $\chi\in V_h$. Then
$$
\innerh{\chi}{\psi}
=
\innerh{P_h\chi}{\psi}
=
(\chi,Q\psi)
\qquad
\forall \chi,\psi\in V_h.
$$
Thus the inner product must be $(\cdot,Q\cdot)$. Finally, Proposition~\ref{prop:pseudo-proj} gives
$
P_h^{*}=Q\Lambda_h,
$
whose range is contained in $V_{0,h}$. Since $\operatorname{Ran}(P_h)=V_h$, the operator $P_h$ cannot be selfadjoint on $L^{2}(\cS)$ and therefore is never an orthogonal projection.
\end{proof}

\subsection{Proofs of the error estimates}

In this section, we additionally assume quasi-uniformity of the mesh family,  as in Section~\ref{sec:error}. This is the hypothesis that allows the inverse estimate and the global spectral bound to be written in terms of the single mesh parameter $h$.

We let $\widehat L_h$ denote the operator induced by the exact bilinear form $a(\cdot,\cdot)$ relative to the exact $L^{2}$ inner product, while $\widetilde L_h$ denotes the operator induced by $a_h(\cdot,\cdot)$ relative to the exact $L^{2}$ inner product, and $L_h$ denotes the operator induced by $a_h(\cdot,\cdot)$ relative to the admissible inner product $\innerh{\cdot}{\cdot}$. Thus
$$
\widehat u_{\beta,h}=\widehat L_h^{-\beta}\Pi_h f,
\qquad
\widetilde u_{\beta,h}=\widetilde L_h^{-\beta}\Pi_h f,
\qquad
\bar u_{\beta,h}=L_h^{-\beta}\Lambda_h f.
$$

\begin{proof}[Proof of Lemma \ref{lem:beta1-bilinear}]
Set $e_h=\widehat u_h-\widetilde u_h$. Since $a(\widehat u_h,\chi)=\ip{f}{\chi}=a_h(\widetilde u_h,\chi)$ for all $\chi\in V_h$, we have
\begin{equation}\label{eq:eh-identity}
\begin{aligned}
a(e_h,\chi)
&=a(\widehat u_h-\widetilde u_h,\chi)
=a(\widehat u_h,\chi)-a(\widetilde u_h,\chi)\\
&=\ip{f}{\chi}-a(\widetilde u_h,\chi)
=a_h(\widetilde u_h,\chi)-a(\widetilde u_h,\chi)
=(a_h-a)(\widetilde u_h,\chi).
\end{aligned}
\end{equation}
Taking $\chi=e_h$ and using Definition~\ref{def:admissible-bilinear} together with the coercivity of $a$ gives
\begin{equation*}
\|e_h\|_{H^1(\cS)}^2
\lesssim a(e_h,e_h)
=(a_h-a)(\widetilde u_h,e_h)
\lesssim h^2|\widetilde u_h|_1\,|e_h|_1
\lesssim h^2\|\widetilde u_h\|_{H^1(\cS)}\,\|e_h\|_{H^1(\cS)}.
\end{equation*}
Hence
\begin{equation}\label{eq:H1-eh-unified}
\|e_h\|_{H^1(\cS)}\lesssim h^2\|\widetilde u_h\|_{H^1(\cS)}.
\end{equation}
Next, by coercivity of $a_h$ on $V_h$ for sufficiently small $h$ (see Remark~\ref{rem:ah-coercive}) and the discrete equation for $\widetilde u_h$,
\begin{equation*}
\|\widetilde u_h\|_{H^1(\cS)}^2
\lesssim a_h(\widetilde u_h,\widetilde u_h)
=\ip{f}{\widetilde u_h}
\le \|f\|\,\|\widetilde u_h\|
\le \|f\|\,\|\widetilde u_h\|_{H^1(\cS)}.
\end{equation*}
Therefore
$\|\widetilde u_h\|_{H^1(\cS)}\lesssim \|f\|$.
Combining \eqref{eq:H1-eh-unified} with this bound, we obtain
$\|e_h\|_{H^1(\cS)}\lesssim h^2\|f\|.$
Further, since $\|e_h\|\le \|e_h\|_{H^1(\cS)}$, it follows that $\|e_h\|\lesssim h^2\|f\|$.
\end{proof}

The next inverse estimate for discrete fractional norms is used repeatedly in the perturbation arguments. Its proof follows the same spectral argument as \cite[Lemma~2.2]{Jin_2023}; the only point to verify in the present setting is that the spectrum of $L_h$ remains uniformly contained in $[c,Ch^{-2}]$.

\begin{proposition}\label{prop:inverse-fractional}
Let $r,s\in\mathbb{R}$ and define 
$\norm{\chi}_{s,h}=\|L_h^{s/2}\chi\|_h$ for $\chi\in V_h$. 
Then, uniformly for sufficiently small $h$,
\begin{equation}\label{eq:inverse-fractional}
\norm{\chi}_{r,h}\lesssim h^{-(r-s)^+}\norm{\chi}_{s,h}, \qquad \chi\in V_h,
\end{equation}
where $(r-s)^+=\max\{r-s,0\}$.
\end{proposition}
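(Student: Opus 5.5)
The plan is to diagonalize $L_h$ and reduce the claim to a bound on its spectrum. Since $a_h$ is symmetric and, by Remark~\ref{rem:ah-coercive}, coercive for sufficiently small $h$, the operator $L_h$ is positive and selfadjoint on $(V_h,\langle\cdot,\cdot\rangle_h)$. Hence there is an $\langle\cdot,\cdot\rangle_h$-orthonormal eigenbasis $\{(\lambda_{j,h},e_{j,h})\}_{j=1}^{N_h}$. Writing $\chi=\sum_j c_j e_{j,h}$, we get $\norm{\chi}_{r,h}^2=\sum_j\lambda_{j,h}^{r}c_j^2$. So it suffices to show $\lambda_{j,h}^{r-s}\lesssim h^{-2(r-s)^+}$ uniformly in $j$ and in small $h$. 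This reduces to showing $\sigma(L_h)\subset[c,Ch^{-2}]$ with $c,C$ independent of $h$:
\begin{itemize}
\item if $r\le s$, use $\lambda_{j,h}^{r-s}\le c^{r-s}$;
\item if $r>s$, use $\lambda_{j,h}^{r-s}\le (Ch^{-2})^{r-s}$.
\end{itemize}

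For the spectral bounds, use the Rayleigh quotient $\lambda=a_h(\chi,\chi)/\norm{\chi}_h^2$.

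\emph{Lower bound.} Remark~\ref{rem:ah-coercive} gives $a_h(\chi,\chi)\ge c\norm{\chi}_{H^1(\cS)}^2\ge c\norm{\chi}^2$. Definition~\ref{def:admissible-innerproduct}(ii) gives $\norm{\chi}^2\gtrsim\norm{\chi}_h^2$. Together these show $\lambda_{\min}(L_h)\gtrsim1$.

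\emph{Upper bound.} Bound $a_h(\chi,\chi)\le a(\chi,\chi)+Ch^2\seminorm{\chi}_1^2$ using Definition~\ref{def:admissible-bilinear}. The continuity of $a$ gives $a(\chi,\chi)\lesssim\norm{\chi}_{H^1(\cS)}^2$. Quasi-uniformity gives the inverse estimate $\seminorm{\chi}_1\lesssim h^{-1}\norm{\chi}$. Combining these yields $a_h(\chi,\chi)\lesssim (1+h^{-2}+1)\norm{\chi}^2\lesssim h^{-2}\norm{\chi}_h^2$, again using the norm equivalence (ii). Hence $\lambda_{\max}(L_h)\lesssim h^{-2}$.

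The only delicate point is uniformity of the upper spectral bound. Continuity of $a_h$ in Definition~\ref{def:admissible-bilinear} is not stated with an $h$-independent constant, so I avoid using it directly. Instead I route the bound through the consistency estimate \eqref{eq:bilinear-consistency}, which is uniform in $h$, together with the inverse inequality that relies on quasi-uniformity. Everything else is the standard spectral argument of \cite[Lemma~2.2]{Jin_2023}.
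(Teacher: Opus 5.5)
Your proposal is correct and follows essentially the same route as the paper: bound the Rayleigh quotient $a_h(\chi,\chi)/\norm{\chi}_h^2$ to get $\sigma(L_h)\subset[c,Ch^{-2}]$, then compare spectral weights in an $\langle\cdot,\cdot\rangle_h$-orthonormal eigenbasis as in \cite[Lemma~2.2]{Jin_2023}. Two of your details tighten the paper's sketch: you use the lower spectral bound explicitly for the case $r<s$, and you derive the upper bound from the uniform consistency estimate \eqref{eq:bilinear-consistency} rather than from a continuity constant for $a_h$ that Definition~\ref{def:admissible-bilinear} does not explicitly make uniform in $h$.
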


\begin{proof}
It is enough to consider the case $r\ge s$. By coercivity and continuity of $a_h$ on $V_h$, together with the norm equivalence $\|\cdot\|_h\simeq \|\cdot\|$, the Rayleigh quotient of $L_h$ satisfies
\begin{equation*}
c \le \frac{\langle L_h\chi,\chi\rangle_h}{\|\chi\|_h^2}=\frac{a_h(\chi,\chi)}{\|\chi\|_h^2}\le
C\frac{|\chi|_1^2+\|\chi\|^2}{\|\chi\|^2}\lesssim h^{-2},\qquad \chi\in V_h\setminus\{0\},
\end{equation*}
where the last step is the standard inverse estimate for piecewise affine finite elements. Hence the spectrum of $L_h$ is contained in $[c,Ch^{-2}]$. Writing $\chi$ in an $\langle\cdot,\cdot\rangle_h$-orthonormal eigenbasis of $L_h$, the estimate \eqref{eq:inverse-fractional} follows  as in \cite[Lemma~2.2]{Jin_2023} by comparing the spectral weights.
\end{proof}

\begin{proof}[Proof of Theorem~\ref{thm:fem-rough}]
We decompose the error as
\begin{equation}\label{supp:Edecomp}
u_{\beta}-\bar u_{\beta,h}
=
\bigl(u_{\beta}-\widehat u_{\beta,h}\bigr)
+
\bigl(\widehat u_{\beta,h}-\widetilde u_{\beta,h}\bigr)
+
\bigl(\widetilde u_{\beta,h}-\bar u_{\beta,h}\bigr)
=:E_{1}+E_{2}+E_{3}.
\end{equation}
The term $E_{1}$ is the classical finite element error for fractional powers. Thus
\begin{equation}\label{supp:E1}
\norm{E_{1}}_{\delta}
\lesssim
h^{\min\{2\beta-\delta,2-\delta\}}
\left(\log\frac1h\right)^{[2\beta-\delta=2,\ \delta>0]}
\norm{f};
\end{equation}
see \cite{bonito2015numerical,cox2020regularity,Fujita_1991}.

We next estimate $E_{3}$, which is the most delicate term. The key observation is that
\begin{equation}\label{supp:beta1_identity}
\widetilde L_h^{-1}\Pi_h=L_h^{-1}\Lambda_h=:G_{h},
\end{equation}
because both sides are the discrete solution operator for the nonfractional problem associated with the bilinear form $a_h$. 

Let $\tilde\lambda_{1,h}$ and $\lambda_{1,h}$ denote the smallest eigenvalues of $\widetilde L_h$ and $L_h$, respectively. Since both operators are positive selfadjoint, their smallest eigenvalues admit the Rayleigh quotient characterizations
$$
\tilde\lambda_{1,h}
=
\inf_{\chi\in V_h\setminus\{0\}}
\frac{a_h(\chi,\chi)}{\|\chi\|^2},
\qquad
\lambda_{1,h}
=
\inf_{\chi\in V_h\setminus\{0\}}
\frac{a_h(\chi,\chi)}{\|\chi\|_h^2}.
$$
By Remark~\ref{rem:ah-coercive} and Definition~\ref{def:admissible-innerproduct}(ii), both quotients are bounded below by a positive constant independent of $h$. Hence there exists $c_0>0$, independent of $h$, such that
$\tilde\lambda_{1,h}\ge c_0$ and 
$\lambda_{1,h}\ge c_0$.
We therefore fix once and for all
$0<r<\nicefrac{c_0}{2},$
so that $r<\min\{\tilde\lambda_{1,h},\lambda_{1,h}\}$ for all sufficiently small $h$. In particular, $r$ is independent of $h$.

Next, let $\tilde\lambda_{N_h,h}$ and $\lambda_{N_h,h}$ denote the largest eigenvalues of $\widetilde L_h$ and $L_h$. By the global inverse estimate, there exists $C>0$, independent of $h$, such that
$
\max\{\tilde\lambda_{N_h,h},\lambda_{N_h,h}\}\le Ch^{-2}.
$
We then set
$
R=(C+1)h^{-2}.
$

We now recall the contour representation of the fractional power. Let $A$ be a finite-dimensional positive selfadjoint operator with $\sigma(A)\subset [r,R]$. For any contour $\Gamma \subseteq \mathbb{C}\setminus (-\infty,0]$ encircling $\sigma(A)$, which avoids the branch $(-\infty,0]$, the Dunford--Taylor formula gives
\begin{equation}\label{supp:frac_formula_contour}
A^{-\beta}
=
\frac{1}{2\pi i}\int_{\Gamma} z^{-\beta}(zI-A)^{-1}\,dz;
\end{equation}
see \cite{Haase2006}. We apply this formula to the keyhole contour
$$
\Gamma_{\omega}
=
\Gamma_{1,\omega}\cup\Gamma_{2,\omega}\cup\Gamma_{3,\omega}\cup\Gamma_{4,\omega},
\qquad 0<\omega<\pi,
$$
where
\begin{align*}
\Gamma_{1,\omega}&=\{z(t)=te^{i\omega}:t\in\overleftarrow{[r,R]}\},\quad
\Gamma_{2,\omega}=\{z(\theta)=Re^{i\theta}:\theta\in\overrightarrow{[-\omega,\omega]}\},\\
\Gamma_{3,\omega}&=\{z(t)=te^{-i\omega}:t\in\overrightarrow{[r,R]}\},\quad
\Gamma_{4,\omega}=\{z(\theta)=re^{i\theta}:\theta\in\overleftarrow{[-\omega,\omega]}\}.
\end{align*}
Writing the contour integral explicitly, we obtain
\begin{equation}\label{supp:frac_formula_keyhole}
\begin{split}
2\pi i A^{-\beta}
&=
e^{(1-\beta)i\omega}\int_R^r t^{-\beta}(te^{i\omega}-A)^{-1}\,dt
+
e^{(\beta-1)i\omega}\int_r^R t^{-\beta}(te^{-i\omega}-A)^{-1}\,dt\\
&\quad
+iR^{1-\beta}\int_{-\omega}^{\omega}e^{i(1-\beta)\theta}(Re^{i\theta}-A)^{-1}\,d\theta
-
ir^{1-\beta}\int_{-\omega}^{\omega}e^{i(1-\beta)\theta}(re^{i\theta}-A)^{-1}\,d\theta.
\end{split}
\end{equation}
Passing to the limit $\omega\uparrow\pi$, we arrive at
\begin{equation}\label{supp:frac_formula}
\begin{split}
A^{-\beta}
&=
\frac{\sin(\pi\beta)}{\pi}\int_r^R t^{-\beta}(t+A)^{-1}\,dt
+
\frac{R^{1-\beta}}{2\pi}\int_{-\pi}^{\pi}e^{i(1-\beta)\theta}(Re^{i\theta}-A)^{-1}\,d\theta\\
&\quad-
\frac{r^{1-\beta}}{2\pi}\int_{-\pi}^{\pi}e^{i(1-\beta)\theta}(re^{i\theta}-A)^{-1}\,d\theta.
\end{split}
\end{equation}
This formula is valid in particular for $A=\widetilde L_h$ and $A=L_h$.

We now explain how \eqref{supp:frac_formula} leads to an estimate for $E_3$. Define
$$
\mathcal E_h
:=
\widetilde L_h^{\delta/2}\bigl(\widetilde L_h^{-\beta}\Pi_h-L_h^{-\beta}\Lambda_h\bigr)
\in L(L^2(\cS)).
$$
Since $E_3=(\widetilde L_h^{-\beta}\Pi_h-L_h^{-\beta}\Lambda_h)f$ and, for $0\le\delta\le1$, the norms $\|\cdot\|_\delta$ and $\|\widetilde L_h^{\delta/2}\cdot\|$ are equivalent on $V_h$ with constants independent of $h$, we have
$
\norm{E_3}_\delta
\lesssim
\|\mathcal E_h f\|
\le
\|\mathcal E_h\|_{L(L^2(\cS))}\,\|f\|.
$
Applying \eqref{supp:frac_formula} to $\widetilde L_h^{-\beta}$ and $L_h^{-\beta}$, we therefore obtain
\begin{equation}\label{supp:E3_resolvent}
\begin{split}
\norm{E_3}_\delta
\lesssim
\|f\|\Bigg(
&\int_r^R t^{-\beta}\bigl\|\widetilde L_h^{\delta/2}\mathcal I_1\bigr\|_{L(L^2(\cS))}\,dt
+
R^{1-\beta}\int_{-\pi}^{\pi}\bigl\|\widetilde L_h^{\delta/2}\mathcal I_2\bigr\|_{L(L^2(\cS))}\,d\theta\\
&\quad+
r^{1-\beta}\int_{-\pi}^{\pi}\bigl\|\widetilde L_h^{\delta/2}\mathcal I_3\bigr\|_{L(L^2(\cS))}\,d\theta
\Bigg),
\end{split}
\end{equation}
where
$\mathcal I_1=(t+\widetilde L_h)^{-1}\Pi_h-(t+L_h)^{-1}\Lambda_h$,
$\mathcal I_2=(Re^{i\theta}-\widetilde L_h)^{-1}\Pi_h-(Re^{i\theta}-L_h)^{-1}\Lambda_h$, and 
$\mathcal I_3=(re^{i\theta}-\widetilde L_h)^{-1}\Pi_h-(re^{i\theta}-L_h)^{-1}\Lambda_h$.
We first estimate $\mathcal I_1$. Using \eqref{supp:beta1_identity}, we obtain
\begin{align}
\mathcal I_1
&=
(t+\widetilde L_h)^{-1}\widetilde L_hG_h-(t+L_h)^{-1}L_hG_h 
=
t(t+\widetilde L_h)^{-1}\widetilde L_h\bigl(\widetilde L_h^{-1}-L_h^{-1}\bigr)L_h(t+L_h)^{-1}G_h \notag\\
&=
t(t+\widetilde L_h)^{-1}\widetilde L_h^{1/2}
\widetilde L_h^{1/2}\bigl(\widetilde L_h^{-1}-L_h^{-1}\bigr)
L_h(t+L_h)^{-1}G_h.
\label{supp:I1}
\end{align}
We claim that
\begin{equation}\label{supp:inner_diff_est}
\bigl\|\widetilde L_h^{1/2}\bigl(\widetilde L_h^{-1}-L_h^{-1}\bigr)\bigr\|_{L(V_h)}
\lesssim
h^{2}\bigl\|L_h^{1/2}\bigr\|_{L(V_h)},
\end{equation}
that is,
\begin{equation}\label{supp:inner_diff_est_pointwise}
\norm{\widetilde L_h^{1/2}\bigl(\widetilde L_h^{-1}-L_h^{-1}\bigr)\psi}
\lesssim
h^{2}\norm{L_h^{1/2}\psi}_{h},
\qquad
\psi\in V_h.
\end{equation}
Indeed, setting
$
\xi=(\widetilde L_h^{-1}-L_h^{-1})\psi,
$
we have
\begin{align*}
\norm{\widetilde L_h^{\nicefrac1{2}}\xi}^{2}
&=
a_h(\xi,\xi)
=
(\psi,\xi)-\innerh{\psi}{\xi}\\
&\lesssim
h^{2}\seminorm{\psi}_{1}\seminorm{\xi}_{1}
\lesssim h^2 a_h(\psi,\psi)^{\nicefrac1{2}} a_h(\xi,\xi)^{\nicefrac1{2}}\\
&= h^{2}\norm{L_h^{\nicefrac1{2}}\psi}_{h}\norm{\widetilde L_h^{\nicefrac1{2}}\xi},
\end{align*}
which proves \eqref{supp:inner_diff_est_pointwise}.
Combining \eqref{supp:I1} and \eqref{supp:inner_diff_est_pointwise}, we obtain
\begin{equation}\label{supp:I1_norm}
\bigl\|\widetilde L_h^{\delta/2}\mathcal I_1\bigr\|_{L(L^2(\cS))}
\lesssim
h^{2}t
\bigl\|\widetilde L_h^{\delta/2}(t+\widetilde L_h)^{-1}\widetilde L_h^{1/2}\bigr\|_{L(L^2(\cS))}
\,
\bigl\|L_h^{3/2}(t+L_h)^{-1}G_h\bigr\|_{L(L^2(\cS))}.
\end{equation}
We now estimate the two factors by the spectral theorem. Since $\widetilde L_h$ is positive selfadjoint,
\begin{equation}\label{supp:spec1}
\bigl\|\widetilde L_h^{\delta/2}(t+\widetilde L_h)^{-1}\widetilde L_h^{1/2}\bigr\|_{L(L^2(\cS))}
=
\sup_{\lambda\in\sigma(\widetilde L_h)}
\frac{\lambda^{\frac12+\frac\delta2}}{t+\lambda}
\lesssim
t^{\frac\delta2-\frac12}.
\end{equation}
Likewise, using $G_h=L_h^{-1}\Lambda_h$ and the uniform boundedness of $\Lambda_h$,
\begin{equation}\label{supp:spec2}
\bigl\|L_h^{3/2}(t+L_h)^{-1}T_h\bigr\|_{L(L^2(\cS))}
=
\bigl\|L_h^{1/2}(t+L_h)^{-1}\Lambda_h\bigr\|_{L(L^2(\cS))} \lesssim
\sup_{\lambda\in\sigma(L_h)}
\frac{\lambda^{1/2}}{t+\lambda}
\lesssim
t^{-1/2}.
\end{equation}
Substituting \eqref{supp:spec1} and \eqref{supp:spec2} into \eqref{supp:I1_norm} yields
$
\bigl\|\widetilde L_h^{\delta/2}\mathcal I_1\bigr\|_{L(L^2(\cS))}
\lesssim
h^2 t^{\delta/2}.
$
Therefore
\begin{equation}\label{supp:I1_final}
\int_r^R t^{-\beta}\bigl\|\widetilde L_h^{\delta/2}\mathcal I_1\bigr\|_{L(L^2(\cS))}\,dt
\lesssim
h^2\int_r^R t^{-\beta+\delta/2}\,dt
\lesssim
h^{\min\{2\beta-\delta,2\}}
\left(\log\frac1h\right)^{[2\beta-\delta=2]}.
\end{equation}
Indeed, if $2\beta-\delta\neq 2$, the integral is bounded by $R^{1-\beta+\delta/2}$, and since $R=(C+1) h^{-2}$ this gives $h^{2\beta-\delta}$ when $2\beta-\delta<2$ and $h^2$ otherwise. In the critical case $2\beta-\delta=2$, the integral equals $\log(R/r)\lesssim \log(1/h)$.

To estimate the contribution on the large circular arc, we argue as in the treatment of $\mathcal I_1$. First,
\begin{equation}\label{supp:I2_norm}
  \begin{aligned}
\bigl\|\widetilde L_h^{\delta/2}\mathcal I_2\bigr\|_{L(L^2(\cS))}
\lesssim
h^2 R
\bigl\|\widetilde L_h^{\delta/2}(Re^{i\theta}&-\widetilde L_h)^{-1}\widetilde L_h^{1/2}\bigr\|_{L(L^2(\cS))}
\,\bigl\|L_h^{3/2}(Re^{i\theta}-L_h)^{-1}G_h\bigr\|_{L(L^2(\cS))}.
  \end{aligned}
\end{equation}
We now estimate the two factors on the right-hand side by the spectral theorem. Since $\sigma(\widetilde L_h)\subset [r,Ch^{-2}]$, we have
\begin{equation}\label{supp:I2_spec1}
\begin{gathered}
R^{1-\beta / 2}\left\|\widetilde{L}_h^{\delta / 2}\left(R e^{i \theta}-\widetilde{L}_h\right)^{-1} \widetilde{L}_h^{1 / 2}\right\|_{L\left(L^2(\mathscr{S})\right)}=R^{1-\beta / 2} \sup _{\lambda \in \sigma\left(\widetilde{L}_h\right)} \frac{\lambda^{\frac{1}{2}+\frac{\delta}{2}}}{\left|R e^{i \theta}-\lambda\right|} \\
\leq R^{1-\beta / 2} \sup _{\lambda \in\left[r, C h^{-2}\right]} \frac{\lambda^{\frac{1}{2}+\frac{\delta}{2}}}{\left|R e^{i \theta}-\lambda\right|} \leq C^{\frac{1}{2}+\frac{\delta}{2}}(C+1)^{1-\frac{\beta}{2}} h^{\beta-1-\delta}
\end{gathered}
\end{equation}



Similarly, using $G_h=L_h^{-1}\Lambda_h$ and the uniform boundedness of $\Lambda_h$, we obtain
\begin{align*}
R^{1-\beta/2}
&\bigl\|L_h^{3/2}(Re^{i\theta}-L_h)^{-1}G_h\bigr\|_{L(L^2(\cS))}
=
R^{1-\beta/2}
\bigl\|L_h^{1/2}(Re^{i\theta}-L_h)^{-1}\Lambda_h\bigr\|_{L(L^2(\cS))}
\lesssim
h^{\beta-1}.
\end{align*}
Therefore, by \eqref{supp:I2_norm}, \eqref{supp:I2_spec1}, and the preceding estimate,
$$
R^{1-\beta}\bigl\|\widetilde L_h^{\delta/2}\mathcal I_2\bigr\|_{L(L^2(\cS))}
\lesssim
h^2\,
h^{\beta-\delta-1}\,
h^{\beta-1}
=
h^{2\beta-\delta}.
$$
Since the bound is uniform in $\theta$, integration over $[-\pi,\pi]$ gives
\begin{equation}\label{supp:I2_final}
R^{1-\beta}\int_{-\pi}^{\pi}\bigl\|\widetilde L_h^{\delta/2}\mathcal I_2\bigr\|_{L(L^2(\cS))}\,d\theta
\lesssim
h^{2\beta-\delta}.
\end{equation}

Finally, we consider the small circular arc. Since $r$ is fixed independently of $h$ and lies strictly below the spectra of both $\widetilde L_h$ and $L_h$, the resolvents $(re^{i\theta}-\widetilde L_h)^{-1}$ and $(re^{i\theta}-L_h)^{-1}$ are uniformly bounded in $L(V_h)$ for $\theta\in[-\pi,\pi]$. Therefore, arguing exactly as for $\mathcal I_2$, but now with $r$ fixed instead of $R\approx h^{-2}$, we obtain
\begin{equation}\label{supp:I3_final}
r^{1-\beta}\int_{-\pi}^{\pi}\bigl\|\widetilde L_h^{\delta/2}\mathcal I_3\bigr\|_{L(L^2(\cS))}\,d\theta
\lesssim
h^2.
\end{equation}
Indeed, in this case all resolvent factors remain uniformly bounded, and the only $h$-dependence comes from the factor $h^2$ arising from \eqref{supp:inner_diff_est_pointwise}.

Substituting \eqref{supp:I1_final}, \eqref{supp:I2_final}, and \eqref{supp:I3_final} into \eqref{supp:E3_resolvent}, we conclude that
\begin{equation}\label{supp:E3_final}
\norm{E_3}_{\delta}
\lesssim
h^{\min\{2\beta-\delta,2\}}
\left(\log\frac1h\right)^{[2\beta-\delta=2]}
\norm{f}.
\end{equation}

The estimate for $E_2$ is obtained by the same contour argument, now replacing the pair $(\widetilde L_h,L_h)$ by $(\widehat L_h,\widetilde L_h)$. Since both operators are realized in the exact $L^2$ inner product, no projection mismatch occurs, and the role of \eqref{supp:inner_diff_est_pointwise} is played by Lemma~\ref{lem:beta1-bilinear}. Thus
\begin{equation}\label{supp:E2_final}
\norm{E_2}_{\delta}
\lesssim
h^{\min\{2\beta-\delta,2\}}
\left(\log\frac1h\right)^{[2\beta-\delta=2]}
\norm{f}.
\end{equation}

Combining \eqref{supp:E1}, \eqref{supp:E2_final}, and \eqref{supp:E3_final} in \eqref{supp:Edecomp}, we obtain
\begin{equation*}
\norm{u_{\beta}-\bar u_{\beta,h}}_{\delta}
\lesssim
h^{\min\{2\beta-\delta,2-\delta\}}
\left(\log\frac1h\right)^{[2\beta-\delta=2,\ \delta>0]}
\norm{f}.
\end{equation*}
Finally, in the case $\delta=0$ and $2\beta=2$, one has $\beta=1$. Then the logarithmic factor disappears, because both perturbation terms are of order $h^2$ by Lemma~\ref{lem:beta1-bilinear}. This proves the theorem.
\end{proof}

\begin{proof}[Proof of Theorem~\ref{thm:fem-smooth}]
Following the notation of the proof of Theorem~\ref{thm:fem-rough}, it is enough to estimate the analogue of $E_3$. Let $g=L^{\sigma/2}f\in L^2(\cS).$
Then
$f=L^{-\sigma/2}g, \norm{g}=\norm{f}_{\sigma},$
and therefore
$$
E_3^{(\sigma)}
:=
\left\|
\bigl(\widetilde L_h^{-\beta}\Pi_h-L_h^{-\beta}\Lambda_h\bigr)L^{-\sigma/2}g
\right\|_{\delta}.
$$

We add and subtract $\bigl(\widetilde L_h^{-\beta}\Pi_h-L_h^{-\beta}\Lambda_h\bigr)\widetilde L_h^{-\sigma/2}\Pi_h g$ and use the triangle inequality:
\begin{equation}\label{supp:smooth_split}
\begin{split}
E_3^{(\sigma)}
&\le
\left\|
\bigl(\widetilde L_h^{-\beta}\Pi_h-L_h^{-\beta}\Lambda_h\bigr)
\bigl(L^{-\sigma/2}-\widetilde L_h^{-\sigma/2}\Pi_h\bigr)g
\right\|_{\delta}+
\left\|
\bigl(\widetilde L_h^{-\beta}\Pi_h-L_h^{-\beta}\Lambda_h\bigr)
\widetilde L_h^{-\sigma/2}\Pi_h g
\right\|_{\delta}.
\end{split}
\end{equation}

We begin with the first term. By the estimate \eqref{supp:E3_final}, applied with data
\begin{equation*}
\bigl(L^{-\sigma/2}-\widetilde L_h^{-\sigma/2}\Pi_h\bigr)g \in L^{2}(\cS),
\end{equation*}
we obtain
\begin{equation}\label{supp:smooth_first_term}
\begin{split}
&\left\|
\bigl(\widetilde L_h^{-\beta}\Pi_h-L_h^{-\beta}\Lambda_h\bigr)
\bigl(L^{-\sigma/2}-\widetilde L_h^{-\sigma/2}\Pi_h\bigr)g
\right\|_{\delta}\\
&\qquad\lesssim
h^{\min\{2\beta-\delta,2\}}
\left(\log\frac1h\right)^{[2\beta-\delta=2;\delta>0]}
\left\|
\bigl(L^{-\sigma/2}-\widetilde L_h^{-\sigma/2}\Pi_h\bigr)g
\right\|.
\end{split}
\end{equation}
Now, the classical finite element estimate for fractional powers gives
$$
\left\|
\bigl(L^{-\sigma/2}-\widetilde L_h^{-\sigma/2}\Pi_h\bigr)g
\right\|
\lesssim
h^{\sigma}\norm{g}.
$$
Substituting this into \eqref{supp:smooth_first_term}, we find
\begin{equation}\label{supp:smooth_first_term_final}
\begin{split}
&\left\|
\bigl(\widetilde L_h^{-\beta}\Pi_h-L_h^{-\beta}\Lambda_h\bigr)
\bigl(L^{-\sigma/2}-\widetilde L_h^{-\sigma/2}\Pi_h\bigr)g
\right\|_{\delta}\\
&\qquad\lesssim
h^{\min\{2\beta-\delta,2\}+\sigma}
\left(\log\frac1h\right)^{[2\beta-\delta=2, \delta>0]}
\norm{g}.
\end{split}
\end{equation}

We now turn to the second term in \eqref{supp:smooth_split}. As in the proof of Theorem~\ref{thm:fem-rough}, we use the contour representation of the fractional power. Thus it suffices to estimate
\begin{equation*}
\begin{split}
&\left\|
L_h^{\delta/2}
\bigl(\widetilde L_h^{-\beta}\Pi_h-L_h^{-\beta}\Lambda_h\bigr)
\widetilde L_h^{-\sigma/2}\Pi_h
\right\|_{L(L^2(\cS))}\\
&\qquad=\left\|
L_h^{\delta/2}
\bigl(\widetilde L_h^{-\beta}\Lambda_h-L_h^{-\beta}\Pi_h\bigr)
\widetilde L_h^{-\sigma/2}\Pi_h
\right\|_{L(L^2(\cS))}.
\end{split}
\end{equation*}
Proceeding exactly as before, the analogue of \eqref{supp:I1} now gives

\begin{equation*}\label{supp:smooth_I1_operator} \begin{split} 
\mathcal I_1 &= t(t+L_h)^{-1}L_h \bigl(L_h^{-1}-\widetilde L_h^{-1}\bigr) \widetilde L_h(t+\widetilde L_h)^{-1}G_h L_h^{-\sigma/2}\\ 
&= t(t+L_h)^{-1}L_h^{1/2} L_h^{1/2} \bigl(L_h^{-1}-\widetilde L_h^{-1}\bigr) \widetilde L_h(t+\widetilde L_h)^{-1}G_h L_h^{-\sigma/2}. \end{split} \end{equation*}

Using \eqref{supp:inner_diff_est_pointwise}, we obtain
\begin{equation}\label{supp:smooth_I1}
\begin{split}
\bigl\|L_h^{\delta/2}\mathcal I_1\bigr\|_{L(L^2(\cS))}
&\lesssim
h^2 t
\bigl\|L_h^{\delta/2}(t+L_h)^{-1}L_h^{1/2}\bigr\|_{L(L^2(\cS))}\cdot
\bigl\|\widetilde L_h^{3/2}(t+\widetilde L_h)^{-1}G_h\widetilde L_h^{-\sigma/2}\bigr\|_{L(L^2(\cS))}.
\end{split}
\end{equation}
By the spectral theorem,
\begin{equation}\label{supp:smooth_spec1}
\bigl\|L_h^{\delta/2}(t+L_h)^{-1}L_h^{1/2}\bigr\|_{L(L^2(\cS))}
=
\sup_{\lambda\in\sigma(L_h)}
\frac{\lambda^{\frac12+\frac\delta2}}{t+\lambda}
\lesssim
t^{\frac\delta2-\frac12},
\end{equation}
and similarly
\begin{equation}\label{supp:smooth_spec2}
\begin{split}
\bigl\|\widetilde L_h^{\frac3{2}}(t+\widetilde L_h)^{-1}G_h\widetilde L_h^{-\frac{\sigma}{2}}\bigr\|_{L(L^2(\cS))}
&=
\bigl\|\widetilde L_h^{\frac1{2}}(t+\widetilde L_h)^{-1}\Pi_h\widetilde L_h^{-\frac{\sigma}{2}}\bigr\|_{L(L^2(\cS))}\lesssim
\sup_{\lambda\in\sigma(\widetilde L_h)}
\frac{\lambda^{\frac12-\frac\sigma2}}{t+\lambda}
\lesssim
t^{-\frac12-\frac\sigma2}.
\end{split}
\end{equation}
Substituting \eqref{supp:smooth_spec1} and \eqref{supp:smooth_spec2} into \eqref{supp:smooth_I1}, we get
$
\bigl\|L_h^{\delta/2}\mathcal I_1\bigr\|_{L(L^2(\cS))}
\lesssim
h^2 t^{\frac{\delta-\sigma}{2}}.
$
Therefore
\begin{equation}\label{supp:smooth_I1_final}
\begin{split}
\int_r^R t^{-\beta}\bigl\|L_h^{\delta/2}\mathcal I_1\bigr\|_{L(L^2(\cS))}\,dt
&\lesssim
h^2\int_r^R t^{-\beta+\frac{\delta-\sigma}{2}}\,dt\lesssim
h^{\min\{2\beta+\sigma-\delta,2\}}
\left(\log\frac1h\right)^{[2\beta+\sigma-\delta=2]}.
\end{split}
\end{equation}

We next estimate the contribution on the large circular arc. As in the proof of Theorem~\ref{thm:fem-rough}, the corresponding resolvent term satisfies
\begin{equation}\label{supp:smooth_I2}
\begin{split}
\bigl\|L_h^{\delta/2}\mathcal I_2\bigr\|_{L(L^2(\cS))}
&\lesssim
h^2 R
\bigl\|L_h^{\delta/2}(Re^{i\theta}-L_h)^{-1}L_h^{1/2}\bigr\|_{L(L^2(\cS))}\\
&\qquad\cdot
\bigl\|\widetilde L_h^{3/2}(Re^{i\theta}-\widetilde L_h)^{-1}G_h\widetilde L_h^{-\sigma/2}\bigr\|_{L(L^2(\cS))}.
\end{split}
\end{equation}
Using again the spectral theorem and the fact that $R=(C+1)h^{-2}$ lies a fixed relative distance away from the spectra, we obtain
\begin{align*}
R^{1-\beta/2}
\bigl\|L_h^{\delta/2}(Re^{i\theta}-L_h)^{-1}L_h^{1/2}\bigr\|_{L(L^2(\cS))}
&\lesssim
h^{\beta-\delta-1},\\
R^{1-\beta/2}
\bigl\|\widetilde L_h^{3/2}(Re^{i\theta}-\widetilde L_h)^{-1}G_h\widetilde L_h^{-\sigma/2}\bigr\|_{L(L^2(\cS))}
&\lesssim
h^{\beta+\sigma-1}.
\end{align*}
Therefore
\begin{equation}\label{supp:smooth_I2_final}
R^{1-\beta}\int_{-\pi}^{\pi}\bigl\|L_h^{\delta/2}\mathcal I_2\bigr\|_{L(L^2(\cS))}\,d\theta
\lesssim
h^{2\beta+\sigma-\delta}.
\end{equation}

Finally, on the small circular arc, the argument is simpler because $r$ is fixed independently of $h$. As before, all resolvent factors remain uniformly bounded, and the only $h$-dependence comes from the factor $h^2$ in \eqref{supp:inner_diff_est_pointwise}. Thus
\begin{equation}\label{supp:smooth_I3_final}
r^{1-\beta}\int_{-\pi}^{\pi}\bigl\|L_h^{\delta/2}\mathcal I_3\bigr\|_{L(L^2(\cS))}\,d\theta
\lesssim
h^2.
\end{equation}

Combining \eqref{supp:smooth_I1_final}, \eqref{supp:smooth_I2_final}, and \eqref{supp:smooth_I3_final}, we conclude that
\begin{equation}\label{supp:smooth_E3_final}
\left\|
L_h^{\delta/2}
\bigl(\widetilde L_h^{-\beta}\Pi_h-L_h^{-\beta}\Lambda_h\bigr)
\widetilde L_h^{-\sigma/2}\Pi_h g
\right\|_{L(L^2(\cS))}\lesssim
h^{\min\{2\beta+\sigma-\delta,2\}}
\left(\log\frac1h\right)^{[2\beta+\sigma-\delta=2]}
\norm{g}.
\end{equation}
Together with \eqref{supp:smooth_first_term_final}, this yields
$$
E_3^{(\sigma)}
\lesssim
h^{\min\{2\beta+\sigma-\delta,2\}}
\left(\log\frac1h\right)^{[\{2\beta+\sigma-\delta=2\}\,\,\lor\,\,\{2\beta-\delta=2, \delta>0\}]}
\norm{g}.
$$

Since $\norm{g}=\norm{f}_{\sigma}$, the same bound holds with $\norm{f}_{\sigma}$ on the right-hand side. The term analogous to $E_2$ is treated exactly as in the proof of Theorem~\ref{thm:fem-rough}, replacing the pair $(\widetilde L_h,L_h)$ by $(\widehat L_h,\widetilde L_h)$. Since both operators are realized in the exact $L^2$ inner product, no projection mismatch occurs, and the resulting estimate is no worse than the one above. The classical FEM estimate for smoother data gives the result for the term analogous to $E_1$. The proof is concluded by combining the three estimates.
\end{proof}
\begin{lemma}
\label{lem:load-quadrature}
Let $0\le \sigma\le 1$. Then, for every $f\in \dot H^{\sigma}$ and every $\chi\in V_{h}$,
\begin{equation}\label{eq:load-quadrature}
\lvert \ip{f}{\chi-Q\chi}\rvert \lesssim h^{1+\sigma}\norm{f}_{\sigma}\seminorm{\chi}_{1}.
\end{equation}
\end{lemma}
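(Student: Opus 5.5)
The plan is to prove the two endpoint cases $\sigma=0$ and $\sigma=1$ and then interpolate. The functional $f\mapsto \ip{f}{\chi-Q\chi}$ is linear in $f$, and $\dot H^{\sigma}$ is the complex interpolation space $[\dot H^{0},\dot H^{1}]_{\sigma}$ of the Hilbert scale generated by $L$. So if, for fixed $\chi\in V_h$, we show
$$
\lvert \ip{f}{\chi-Q\chi}\rvert\lesssim h\norm{f}\seminorm{\chi}_1
\quad\text{and}\quad
\lvert \ip{f}{\chi-Q\chi}\rvert\lesssim h^{2}\norm{f}_{1}\seminorm{\chi}_1,
$$
then interpolating the norm of this functional gives $h^{1+\sigma}\norm{f}_\sigma\seminorm{\chi}_1$.

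\emph{Case $\sigma=0$.} We only need the elementwise bound $\norm{\chi-Q\chi}_{L^2(K)}\lesssim h_K\norm{\nabla\chi}_{L^2(K)}$. On $K\cap b_z$ we have $\chi-Q\chi=\chi-\chi(z)$, and since $\chi$ is affine on $K$,
$$
|\chi(x)-\chi(z)|\le h_K|\nabla\chi|_K| .
$$
Squaring, integrating over $K$ and summing over $K$ gives $\norm{\chi-Q\chi}\lesssim h\seminorm{\chi}_1$. Cauchy--Schwarz then gives the first bound. This step is routine and needs only shape regularity.

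\emph{Case $\sigma=1$.} Here the orthogonality built into the barycentric construction supplies the extra power of $h$. The key fact is that on each element $K$ the regions $\cA_z(K)$ have equal measure $|K|/(d+1)$. Hence
$$
\int_K Q\chi\,dx=\frac{|K|}{d+1}\sum_{z\in\cZ(K)}\chi(z)=\int_K\chi\,dx,
$$
the last equality being exactness of vertex quadrature for affine functions. So $\chi-Q\chi$ has zero mean on every $K$. In the Dirichlet case, inactive boundary vertices carry $\chi(z)=0$, so the same identity holds there.

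It follows that $\ip{f}{\chi-Q\chi}=\sum_K\ip{f-\bar f_K}{\chi-Q\chi}_K$, where $\bar f_K$ is the mean of $f$ on $K$. Poincaré's inequality on $K$ gives $\norm{f-\bar f_K}_{L^2(K)}\lesssim h_K\norm{\nabla f}_{L^2(K)}$, and the local estimate from the case $\sigma=0$ handles the other factor. Summing and applying Cauchy--Schwarz yields $h^2\seminorm{f}_{H^1}\seminorm{\chi}_1$. Finally, $\seminorm{f}_{H^1}\lesssim\norm{f}_1$ because $\dot H^1=\cV$ with equivalent norms, by coercivity and continuity of $a$.

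The main obstacle is the interpolation step. Strictly, it must be justified that the scale $\dot H^\sigma$ interpolates, so that the dual norm of $\ell_\chi(f)=\ip{f}{\chi-Q\chi}$ on $\dot H^\sigma$ is bounded by $\norm{\ell_\chi}_{(\dot H^0)'}^{1-\sigma}\norm{\ell_\chi}_{(\dot H^1)'}^{\sigma}$. This holds because $\dot H^\sigma=D(L^{\sigma/2})$ is a Hilbert scale. For that scale the interpolation inequality can be proved directly by expanding in the eigenbasis $\{\varphi_j\}$ and applying Hölder's inequality, or by the three-lines argument applied to $s\mapsto \ell_\chi(L^{-s/2}g)$. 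I would present it via the latter, with the two endpoint bounds as input.
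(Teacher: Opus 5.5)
Your proposal is correct and follows the paper's route: you prove the endpoint bounds at $\sigma=0$ and $\sigma=1$ and then interpolate in the Hilbert scale $\dot H^{\sigma}$. The paper only cites Thom\'ee for the $\sigma=1$ endpoint, whereas you prove it directly. You use the equal-volume barycentric subdivision (together with $\chi(z)=0$ at inactive vertices) to get elementwise zero mean of $\chi-Q\chi$, and then apply Poincar\'e on each $K$; your H\"older/eigen-expansion justification of the interpolation inequality for the functional $f\mapsto\ip{f}{\chi-Q\chi}$ is also sound.
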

\begin{proof}
For $\sigma=0$, the estimate follows from $\norm{\chi-Q\chi}\lesssim h\seminorm{\chi}_{1}$. For $\sigma=1$, it is the standard barycentric lumped-mass quadrature estimate on shape-regular simplicial meshes,
\begin{equation*}
\lvert \ip{f}{\chi-Q\chi}\rvert\lesssim h^{2}\seminorm{f}_{1}\seminorm{\chi}_{1};
\end{equation*}
see, for example, \cite[Chapter~4]{Thomée2006}. Interpolation between the endpoint estimates yields \eqref{eq:load-quadrature} for all $0\le \sigma\le 1$.
\end{proof}

\begin{proof}[Proof of Theorem~\ref{thm:box-smooth}]
By Theorem~\ref{thm:fem-smooth}, it is enough to estimate
$
\|u_{\beta,h}-\bar u_{\beta,h}\|_{\delta}.
$
Set
$
d_h:=L_h^{-\beta+\delta/2}(P_h-\Lambda_h)f.
$
Since, on $V_h$, the norm $\|\cdot\|_{\delta}$ is equivalent to the graph norm induced by $L_h^{\delta/2}$, with constants independent of $h$, it is enough to estimate $\|d_h\|$.

We first consider the case $\sigma=0$. Let $g\in L^2(\cS)$ be arbitrary. Since $\Lambda_h g\in V_h$ and
$
\langle \Lambda_h g,\chi\rangle_h=(g,\chi)$ for $\chi\in V_h$ 
we have
$
(d_h,g)=\langle d_h,\Lambda_h g\rangle_h.
$
Using the selfadjointness of $L_h$ with respect to $\langle\cdot,\cdot\rangle_h$, we obtain
$$
(d_h,g)
=
\langle (P_h-\Lambda_h)f,\,L_h^{-\beta+\delta/2}\Lambda_h g\rangle_h.
$$
By the definitions of $P_h$ and $\Lambda_h$,
$
\langle P_h f,\chi\rangle_h=(f,Q\chi)$ and 
$\langle \Lambda_h f,\chi\rangle_h=(f,\chi)$ for $\chi\in V_h$.
Therefore
$$
(d_h,g)
=
(f,Q L_h^{-\beta+\delta/2}\Lambda_h g)
-
(f,L_h^{-\beta+\delta/2}\Lambda_h g)
=
(f,(Q-I)L_h^{-\beta+\delta/2}\Lambda_h g).
$$
Hence
$
|(d_h,g)|
\le
\|f\|\,
\|(Q-I)L_h^{-\beta+\nicefrac{\delta}{2}}\Lambda_h g\|$.
Taking the supremum over all $g\in L^2(\cS)$ with $\|g\|=1$, we conclude that
\begin{equation}\label{supp:box-rough-operator}
\|d_h\|
\lesssim
\|(Q-I)L_h^{-\beta+\delta/2}\Lambda_h\|_{L(L^2(\cS))}\,\|f\|.
\end{equation}
Since $Q$ is the nodal interpolation onto dual-cell constants, we have
$
\|(I-Q)\chi\|\lesssim h\|\chi\|_{H^1(\cS)}$ for $\chi\in V_h$.
Therefore,
\begin{equation}\label{eq:d_h}
\|d_h\|
\lesssim
h\,\|L_h^{-\beta+\delta/2}\Lambda_h\|_{L(L^2(\cS);H^1(\cS))}\,\|f\|.
\end{equation}
Observe that since
$
\|L_h^{-\beta+\nicefrac{\delta}{2}}\Lambda_h g\|_{H^1(\cS)}
\lesssim
\|L_h^{-\beta+\nicefrac{\delta}{2}}\Lambda_h g\|_{1,h},
$
it is enough to estimate the discrete norm on the right. Applying Proposition~\ref{prop:inverse-fractional} with
$r=-2\beta+\delta+1$ and $s=0$,
to $\chi=\Lambda_h g$, we obtain
\begin{align*}
\|L_h^{-\beta+\delta/2}\Lambda_h g\|_{1,h}=\|\Lambda_h g\|_{-2\beta+\delta+1,h}
&\lesssim
h^{-(-2\beta+\delta+1)^+}
\|\Lambda_h g\|
\lesssim h^{\min\{2\beta-\delta,1\}-1}
\|\Lambda_h g\|.
\end{align*}
where we used in the last inequality that for every $a,b\in\mathbb{R}$ we have that $(a+b)^{+}=b-\min\{-a,b\}$. Plugging the last inequality into \eqref{eq:d_h}, we obtain
$
\|d_h\|
\lesssim
h^{\min\{2\beta-\delta,1\}}\|f\|.
$

We now consider the case $0<\sigma\le 1$. Let $g\in L^2(\cS)$ and let $w\in\cV$ solve the adjoint problem
$
Lw=g.
$
By convexity of $\cS$ and elliptic regularity, we have $w\in H^2(\cS)$ and
$
\|w\|_{H^2(\cS)}\lesssim \|g\|.
$
Let $w_h=R_hw\in V_h$ be the Rayleigh-Ritz projection of $w$, that is,
$a(w_h,\chi)=a(w,\chi)$ for $\chi\in V_h$.
We first record the estimate
\begin{equation}\label{supp:box-ritz}
\|w_h\|_{2,h}\lesssim \|w\|_{H^2(\cS)}+h\|w_h\|_{H^1(\cS)}\lesssim \|g\|,
\end{equation}
for $h\le 1$. Indeed, for every $\chi\in V_h$,
$
\langle L_h w_h,\chi\rangle_h
=
a_h(w_h,\chi)
=
a(w,\chi)-(a-a_h)(w_h,\chi).
$
Hence
$$
|\langle L_h w_h,\chi\rangle_h|
\lesssim
\|Lw\|\,\|\chi\|+h^2\|w_h\|_{H^1(\cS)}\|\chi\|_{H^1(\cS)}.
$$
Taking the supremum over $\chi\in V_h$, and using the inverse inequality together with the $H^1$-stability of the Ritz projection, gives the first bound in \eqref{supp:box-ritz}. The second follows from elliptic regularity and $H^1$-stability of the Ritz projection, by the estimate
$$
\|w_h\|_{H^1(\cS)}\lesssim \|w\|_{H^1(\cS)}\lesssim \|w\|_{H^2(\cS)}\lesssim \|g\|.
$$

We now estimate $d_h$ by duality. Using the definition of $d_h$ and the fact that $Lw=g$, we compute
\begin{equation}\label{supp:box-duality}
\begin{split}
(g,d_h)
&=
a(d_h,w)
=
a(d_h,w_h)-a_h(d_h,w_h)+a_h(d_h,w_h)\\
&=
a(d_h,w_h)-a_h(d_h,w_h)
+
\left(f,(Q-I)L_h^{-\beta+\delta/2+1}w_h\right).
\end{split}
\end{equation}

We estimate the two terms separately. For the first one, Definition~\ref{def:admissible-bilinear} gives
$$
|a(d_h,w_h)-a_h(d_h,w_h)|
\lesssim
h^2 |d_h|_1 |w_h|_1.
$$
Repeating the duality argument from the case \(\sigma=0\), now for
$L_h^{1/2}d_h
=
L_h^{-\beta+(\delta+1)/2}(P_h-\Lambda_h)f,$
and using the equivalence of the discrete and continuous energy norms, we obtain
$$
|d_h|_1
\lesssim
h^{\min\{2\beta-\delta-1,1\}}\|f\|
\lesssim
h^{\min\{2\beta-\delta-1,1\}}\|f\|_\sigma.
$$
Therefore
\begin{equation}\label{supp:box-first-term-final}
\begin{split}
|a(d_h,w_h)-a_h(d_h,w_h)|
&\lesssim
h^{2}h^{\min\{2\beta-\delta-1,1\}}\|f\|\,\|w_h\|_{H^1(\cS)}\lesssim
h^{\min\{2\beta-\delta+1,3\}}\|f\|_{\sigma}\,\|g\|.
\end{split}
\end{equation}

For the second term in \eqref{supp:box-duality}, Lemma~\ref{lem:load-quadrature} gives
\begin{equation}\label{supp:box-second-term}
\left|
\left(f,(Q-I)L_h^{-\beta+\delta/2+1}w_h\right)
\right|
\lesssim
h^{1+\sigma}\|f\|_\sigma
\|L_h^{-\beta+\delta/2+1}w_h\|_{H^1(\cS)}.
\end{equation}
By norm equivalence and Proposition~\ref{prop:inverse-fractional}, applied with
\(r=3-2\beta+\delta\) and \(s=2\), we obtain
\[
\begin{aligned}
\left\|L_h^{-\beta+\delta/2+1}w_h\right\|_{H^1(\mathcal S)}
&\lesssim
\|w_h\|_{3-2\beta+\delta,h} \\
&\lesssim
h^{\min\{2\beta-\delta,1\}-1}\|w_h\|_{2,h}.
\end{aligned}
\]
Substituting this into \eqref{supp:box-second-term} and then using \eqref{supp:box-ritz}, we find
\begin{equation}\label{supp:box-second-term-final}
\begin{split}
\left|
\left(f,(Q-I)L_h^{-\beta+\frac{\delta}{2}+1}w_h\right)
\right|
&\lesssim
h^{1+\sigma}
h^{\min\{2\beta-\delta,1\}-1}
\|w_h\|_{2,h}
\|f\|_\sigma\lesssim
h^{\min\{2\beta+\sigma-\delta,1+\sigma\}}
\|f\|_\sigma \|g\|.
\end{split}
\end{equation}

Substituting \eqref{supp:box-first-term-final} and \eqref{supp:box-second-term-final} into \eqref{supp:box-duality}, we obtain
$
|(g,d_h)|
\lesssim
h^{\min\{2\beta+\sigma-\delta,1+\sigma\}}
\|f\|_\sigma \|g\|.
$
Since this holds for every $g\in L^2(\cS)$, duality gives
$
\|d_h\|
\lesssim
h^{\min\{2\beta+\sigma-\delta,1+\sigma\}}
\|f\|_\sigma.
$
Recalling that $\|u_{\beta,h}-\bar u_{\beta,h}\|_\delta\lesssim \|d_h\|$, the result follows.
\end{proof}

\section{Numerical experiments}\label{sec:numerics}

In this section, we compare our theoretical findings with the numerical results. In Sections~\ref{exp:6.1} and~\ref{exp:6.3} we take $L=-\Delta+I$ ($A=I$, $\kappa\equiv 1$), while in Section~\ref{exp:6.2} we take $L=-\Delta$ ($A=I$, $\kappa\equiv 0$). The one-dimensional tests use uniform $P_1$ meshes of $[0,1]$ with $n=2^{k}$ elements, $k=3,\ldots,9$, and homogeneous Dirichlet conditions; the two-dimensional test is a Neumann problem on uniform lattices with $(2^{k}+1)^{2}$ vertices, $k=3,\ldots,7$, compared with an overkill reference at level $k=8$. Throughout, the quantity labeled ``DOFs'' on the horizontal axes is $2^{k}+1$ (mesh vertices in one dimension, including boundary nodes; vertices along each side in two dimensions), so the free interior unknowns in the Dirichlet experiments are $2^{k}-1$. Fractional powers are approximated by the sinc quadrature of \cite{bonito2015numerical}, all errors are measured in $L^{2}(\cS)$ ($\delta=0$), and the codes are available in \cite{frac_box_exp}.

\subsection{Indicator load in one dimension}\label{exp:6.1}

We first consider $\cS=[0,1]$ with right-hand side $f=\mathbbm{1}_{[0,1/2]}$. Then $f\in \dot H^\gamma$ for every $\gamma<1/2$, whereas $f\notin \dot H^\gamma$ for $\gamma\ge 1/2$. Moreover, on a uniform one-dimensional mesh, one has $(f,\phi_z)=(f,Q\phi_z)$ for every nodal basis function $\phi_z$. Therefore, the FEM and box-method error predictions coincide, and both yield the rate $\min\{2\beta+1/2,2\}$. The exact solution $u_{\beta}$ is represented by the eigenbasis expansion
\[
u_{\beta}(x)=\sum_{n=1}^{\infty}
\frac{2\left(1-\cos(n\pi/2)\right)}{n\pi\left((n\pi)^{2}+1\right)^{\beta}}\sin(n\pi x).
\]
In the computations, the reference solution was approximated by truncating this series after a $\beta$-dependent number of modes (up to $8000$ terms for the smallest values of $\beta$), chosen large enough that the truncation error does not affect the experimental order of convergence (EOC). Figure~\ref{fig:indicator1d} shows that both discretizations attain the theoretical order of convergence (TOC) predicted by Corollary~\ref{cor:fully-implementable}; the coincidence of the two curves reflects that, for this piecewise constant load on a uniform one-dimensional mesh, the box average matches standard cellwise quadrature.

\begin{figure}[t]
\centering
\includegraphics[width=0.90\textwidth]{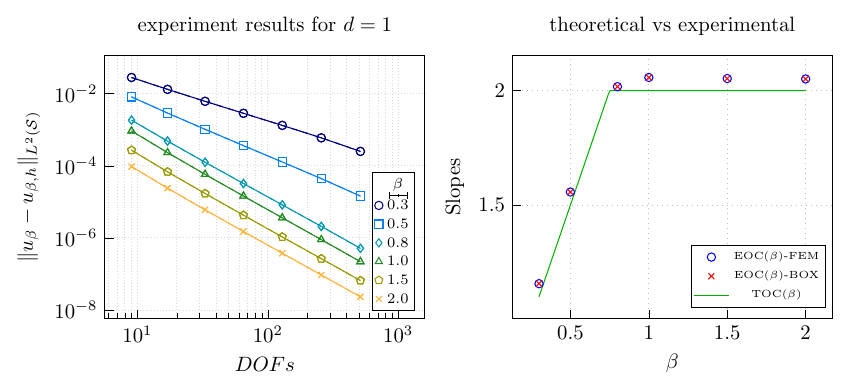}
\caption{One-dimensional indicator load. Left: $L^{2}(\cS)$ error versus DOFs. Right: experimental orders of convergence compared with $\min\{2\beta+1/2,2\}$.}
\label{fig:indicator1d}
\end{figure}

\subsection{A singular power load in one dimension}\label{exp:6.2}

Our second one-dimensional test uses $f(x)=x^{-0.499}$ on $\mathcal{S}=(0,1)$. In this case, $f\in \dot H^\gamma$ for every $\gamma<0.001$, whereas $f\notin \dot H^\gamma$ for $\gamma\ge 0.001$. The reference solution is computed by truncating the eigenfunction expansion of $u_\beta$:
\begin{equation*}
u_{\beta}(x)\approx 2\sum_{n=1}^{8000}
(n\pi)^{-2\beta}
\left[\int_{0}^{1} x^{-0.499}\sin(n\pi x)\,dx\right]\sin(n\pi x).
\end{equation*}
For $\beta=1$, classical box-method theory on uniform meshes predicts a rate not worse than $h^{1.5}$ for this type of singular load; see \cite{ewing2002}. Figure~\ref{fig:xalpha1d} confirms that the observed rates remain consistent with the abstract theory: the saturation at larger $\beta$ is limited by the spatial singularity of $f$, and the box method stays strictly below the FEM order when $\left(f,\phi_z\right)\neq\left(f,Q\phi_z\right)$.

\begin{figure}[t]
\centering
\includegraphics[width=0.90\textwidth]{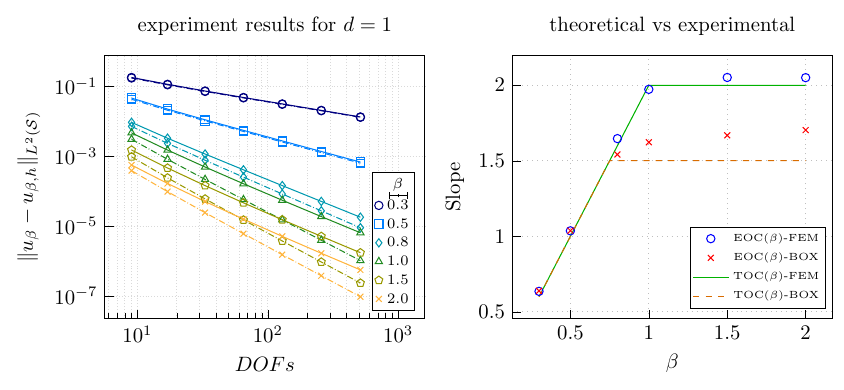}
\caption{One-dimensional singular load $f(x)=x^{-0.499}$. Left: $L^{2}(\cS)$ error versus DOFs. Right: experimental slopes compared with the theoretical prediction.}
\label{fig:xalpha1d}
\end{figure}

\subsection{Changing inner products in two dimensions}\label{exp:6.3}
On the unit square $\mathcal{S}=[0,1]^2$ we take
$$f(x,y)=-\sign\left(\left(x-\tfrac12\right)\left(y-\tfrac12\right)\right),$$
which belongs to $\dot H^{\gamma}$ for every $\gamma<1/2$. Here the interest is the dependence on the auxiliary inner product rather than the FEM/box comparison: we fix the box-method discretization and compare the exact $L^{2}(\cS)$ product with $\langle\cdot,\cdot\rangle_{0}$ and $\langle\cdot,\cdot\rangle_{1}$ from Proposition~\ref{prop:family-inner-products}. Figure~\ref{fig:indicator2d} shows that the observed slope is the same for all three choices, in agreement with Sections~\ref{sec:discretizations} and~\ref{sec:error}.

\begin{figure}[t]
\centering
\includegraphics[width=0.90\textwidth]{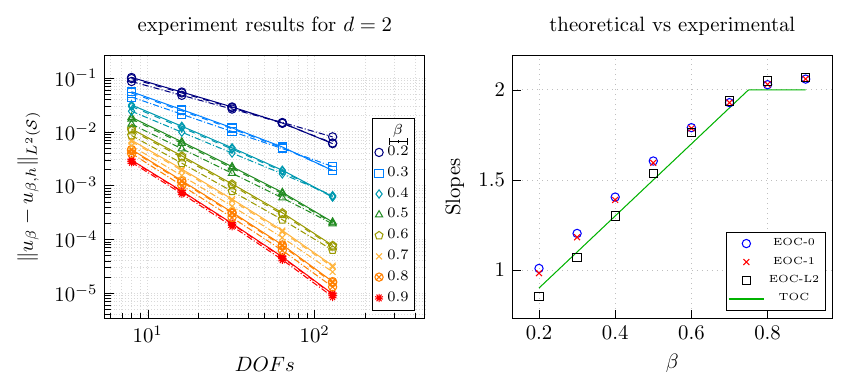}
\caption{Two-dimensional checkerboard load. Left: $L^{2}(\cS)$ error versus DOFs (here $2^{k}+1$, the number of vertices along each side). Right: experimental slopes compared with $\min\{2\beta+1/2,2\}$.}
\label{fig:indicator2d}
\end{figure}

\section*{Conflict of interest}
The authors declare that they have no conflict of interest.

\section*{Data availability}
The code and data used to produce the numerical experiments reported in Section~\ref{sec:numerics} are available in \cite{frac_box_exp}. No new empirical data were generated in the course of this study.

\bibliographystyle{siamplain}
\bibliography{sn-bibliography}

@article{du2020numerical,
  title={Numerical methods for nonlocal and fractional models},
  author={Du, Qiang and Gunzburger, Max and Lehoucq, Richard B. and Zhou, Kun},
  journal={Acta Numer.},
  volume={29},
  pages={1--124},
  year={2020}
}

@article{glusa2021error,
  title={Error estimates for the optimal control of a parabolic fractional {PDE}},
  author={Glusa, Christian and Ot{\'a}rola, Enrique},
  journal={SIAM J. Numer. Anal.},
  volume={59},
  number={2},
  pages={1140--1165},
  year={2021},
  publisher={SIAM}
}

@article{caffarelli2007extension,
  title={An extension problem related to the fractional {Laplacian}},
  author={Caffarelli, Luis and Silvestre, Luis},
  journal={Comm. Partial Differential Equations},
  volume={32},
  number={8},
  pages={1245--1260},
  year={2007},
  publisher={Taylor \& Francis}
}

@article{stinga2017regularity,
  title={Regularity theory and extension problem for fractional nonlocal parabolic equations and the master equation},
  author={Stinga, Pablo Ra{\'u}l and Torrea, Jos{\'e} L.},
  journal={SIAM J. Math. Anal.},
  volume={49},
  number={5},
  pages={3893--3924},
  year={2017}
}

@article{harizanov2020numerical,
  title={Numerical approximation of fractional powers of elliptic operators},
  author={Harizanov, Svetoslav and Lazarov, Raytcho and Marinov, Pencho and Pasciak, Joseph and Vassilevski, Panayot},
  journal={IMA J. Numer. Anal.},
  volume={40},
  number={3},
  pages={1746--1792},
  year={2020}
}

@article{lindgren2011explicit,
  title={An explicit link between {Gaussian} fields and {Gaussian} {Markov} random fields: the stochastic partial differential equation approach},
  author={Lindgren, Finn and Rue, H{\aa}vard and Lindstr{\"o}m, Johan},
  journal={J. R. Stat. Soc. Ser. B. Stat. Methodol.},
  volume={73},
  number={4},
  pages={423--498},
  year={2011},
  publisher={Oxford University Press},
  tmpDOI= "10.1111/j.1467-9868.2011.00777.x"
}

@article{hollbacher2021sharp,
  title={A sharp interface method using enriched finite elements for elliptic interface problems},
  author={H{\"o}llbacher, Susanne and Wittum, Gabriel},
  journal={Numer. Math.},
  volume={147},
  number={4},
  pages={759--781},
  year={2021},
  publisher={Springer}
}

@article{hackbusch1989first,
  title={On first and second order box schemes},
  author={Hackbusch, Wolfgang},
  journal={Computing},
  volume={41},
  number={4},
  pages={277--296},
  year={1989},
  publisher={Springer}
}

@book{brenner,
author = "Brenner, Susanne C. and Scott, L. Ridgway ",
title="The Mathematical Theory of Finite Element Methods",
year="2008",
publisher="Springer New York",
address="New York, NY",
tmpDOI="10.1007/978-0-387-75934-0"
}

@article{bolin2020rational,
  title={The rational {SPDE} approach for {Gaussian} random fields with general smoothness},
  author={Bolin, David and Kirchner, Kristin},
  journal={J. Comput. Graph. Statist.},
  volume={29},
  number={2},
  pages={274--285},
  year={2020},
  publisher={Taylor \& Francis}
}

@article{bolin2024covariance,
 author = {Bolin, David and Simas, Alexandre B and Xiong, Zhen},
 tmpDOI = {10.1080/10618600.2023.2231051},
 journal = {J. Comput. Graph. Statist.},
 number = {1},
 pages = {64--74},
 publisher = {Taylor \& Francis},
 title = {Covariance--based rational approximations of fractional {SPDEs} for computationally efficient {Bayesian} inference},
 volume = {33},
 year = {2024}
}

@article{chatzipantelidis2002,
  title={Finite volume methods for elliptic {PDE}'s: a new approach},
  author={Chatzipantelidis, Panagiotis},
  journal={ESAIM Math. Model. Numer. Anal.},
  volume={36},
  number={2},
  pages={307--324},
  year={2002},
  publisher={EDP Sciences},
  tmpDOI = "10.1051/m2an:2002014"
}

@article{jianguo1998finite,
  title={On the finite volume element method for general self-adjoint elliptic problems},
  author={Jianguo, Huang and Shitong, Xi},
  journal={SIAM J. Numer. Anal.},
  volume={35},
  number={5},
  pages={1762--1774},
  year={1998},
  publisher={SIAM},
  tmpDOI = "10.1137/S0036142994264699"
}

@article{chou2000error,
  title={Error estimates in $\mbox{L}^{2}$,$\mbox{H}^{1}$ and $\mbox{L}^{\infty}$ in covolume methods for elliptic and parabolic problems: a unified approach},
  author={Chou, So-Hsiang and Li, Qian},
  journal={Math. Comp.},
  volume={69},
  number={229},
  pages={103--120},
  year={2000},
  tmpDOI = "10.1090/S0025-5718-99-01192-8"
}

@article{cox2020regularity,
  title={Regularity and convergence analysis in {Sobolev} and {H{\"o}lder} spaces for generalized {Whittle}--{Mat{\'e}rn} fields},
  author={Cox, Sonja G and Kirchner, Kristin},
  journal={Numer. Math.},
  volume={146},
  number={4},
  pages={819--873},
  year={2020},
  publisher={Springer},
  tmpDOI="10.1007/s00211-020-01151-x"
}

@book{ciarlet2002finite,
  title={The finite element method for elliptic problems},
  author={Ciarlet, Philippe G},
  year={2002},
  publisher={SIAM}
}

@book{Thomée2006,
author="Thom{\'e}e, Vidar",
title="{Galerkin} Finite Element Methods for Parabolic Problems",
year="2006",
publisher="Springer Berlin Heidelberg",
address="Berlin, Heidelberg",
tmpDOI="10.1007/3-540-33122-0_15"
}

@article{bank1987some,
  title={Some error estimates for the box method},
  author={Bank, Randolph E and Rose, Donald J},
  journal={SIAM J. Numer. Anal.},
  volume={24},
  number={4},
  pages={777--787},
  year={1987},
  publisher={SIAM},
  tmpDOI = "10.1137/0724050"
}

@article{bonito2015numerical,
  title={Numerical approximation of fractional powers of elliptic operators},
  author={Bonito, Andrea and Pasciak, Joseph},
  journal={Math. Comp.},
  volume={84},
  number={295},
  pages={2083--2110},
  year={2015},
  tmpDOI = "10.1090/S0025-5718-2015-02937-8"
}

@book{Haase2006,
author="Haase, Markus",
title="The Functional Calculus for Sectorial Operators",
year="2006",
publisher="Birkh{\"a}user Basel",
address="Basel",
tmpDOI="10.1007/3-7643-7698-8"
}

@inbook{Fujita_1991,
  title={Evolution problems},
  author={Fujita, Hiroshi and Suzuki, Takashi},
  booktitle={Finite Element Methods (Part 1)},
  publisher={Elsevier},
  year={1991},
  pages={789--928},
  issn={1570-8659},
  tmpurl={http://dx.doi.org/10.1016/s1570-8659(05)80043-2},
  tmpDOI={10.1016/s1570-8659(05)80043-2}
}

@article{voet2023mathematical,
  title={A mathematical theory for mass lumping and its generalization with applications to isogeometric analysis},
  author={Voet, Yannis and Sande, Espen and Buffa, Annalisa},
  journal={Comput. Methods Appl. Mech. Engrg.},
  volume={410},
  pages={116033},
  year={2023},
  publisher={Elsevier},
  tmpDOI = "10.1016/j.cma.2023.116033"
}

@article{ewing2002,
  title={On the accuracy of the finite volume element method based on piecewise linear polynomials},
  author={Ewing, Richard E and Lin, Tao and Lin, Yanping},
  journal={SIAM J. Numer. Anal.},
  volume={39},
  number={6},
  pages={1865--1888},
  year={2002},
  publisher={Society for Industrial \& Applied Mathematics (SIAM)},
  issn={1095-7170},
  tmpurl={http://dx.doi.org/10.1137/s0036142900368873},
  tmpDOI={10.1137/s0036142900368873}
}

@book{strangfix2008,
author = {Strang, Gilbert and Fix, George},
title = {An Analysis of the Finite Element Methods, New Edition},
publisher = {Wellesley-Cambridge Press},
year = {2008},
tmpDOI = {10.1137/1.9780980232707},
address = {Philadelphia, PA},
edition   = {},
tmpurl = {https://epubs.siam.org/doi/abs/10.1137/1.9780980232707},
tmpeprint = {https://epubs.siam.org/doi/pdf/10.1137/1.9780980232707}
}

@incollection{FIX1972525,
title = {Effects of quadrature errors in finite element approximation of steady state, eigenvalue and parabolic problems},
editor = {A.K. Aziz},
booktitle = {The Mathematical Foundations of the Finite Element Method with Applications to Partial Differential Equations},
publisher = {Academic Press},
pages = {525-556},
year = {1972},
isbn = {978-0-12-068650-6},
tmpDOI = {https://doi.org/10.1016/B978-0-12-068650-6.50024-1},
tmpurl = {https://www.sciencedirect.com/science/article/pii/B9780120686506500241},
author = {George J. Fix}
}

@article{Liang2001,
  author    = {Liang, Shengde and Ma, Xiuling and Zhou, Aihui},
  title     = {Finite Volume Methods for Eigenvalue Problems},
  journal   = {BIT Numer. Math.},
  year      = {2001},
  volume    = {41},
  number    = {2},
  pages     = {345--363},
  tmpDOI       = {10.1023/A:1021946607960},
  issn      = {1572-9125},
  tmpurl       = {https://doi.org/10.1023/A:1021946607960}
}

@book{Jin_2023, title={Numerical Treatment and Analysis of Time-Fractional Evolution Equations}, ISBN={9783031210501}, ISSN={2196-968X}, tmpurl={http://dx.doi.org/10.1007/978-3-031-21050-1}, tmpDOI={10.1007/978-3-031-21050-1}, journal={Appl. Math. Sci.}, publisher={Springer International Publishing}, author={Jin, Bangti and Zhou, Zhi}, year={2023} }

@misc{frac_box_exp,
  author = {Almeida-Sousa, Kelvin J. R. and Bolin, David and Simas, Alexandre B.},
  title = {{fractional\_box\_experiments}},
  year = {2026},
  howpublished = {\url{https://github.com/KJhonson/fractional_box_experiments}},
  note = {Accessed: 2026-05-10}
}

\end{document}